% SIAM Article Template
\documentclass{siamart1116}
\usepackage[T1]{fontenc}
\usepackage[utf8]{inputenc}
\numberwithin{theorem}{section}

% Information that is shared between the article and the supplement
% (title and author information, macros, packages, etc.) goes into
% ex_shared.tex. If there is no supplement, this file can be included
% directly.

% SIAM Shared Information Template
% This is information that is shared between the main document and any
% supplement. If no supplement is required, then this information can
% be included directly in the main document.

% Packages and macros go here
\usepackage{lipsum}
\usepackage{amsfonts}
\usepackage{graphicx}
\usepackage{epstopdf}
\usepackage{algorithmic}

\usepackage[T1]{fontenc}
\usepackage[UKenglish]{babel}%
\usepackage{amssymb, amsmath}
\usepackage{lmodern}
\usepackage{dsfont}
\usepackage{enumerate}
\usepackage[all]{xy}
\usepackage{mathtools}
\usepackage{appendix}
\usepackage{pgf,tikz}
\usepackage{tikz-cd}
\usetikzlibrary{arrows}
\usepackage{centernot}
\usepackage{verbatim}
\usepackage[normalem]{ulem}

\ifpdf
  \DeclareGraphicsExtensions{.eps,.pdf,.png,.jpg}
\else
  \DeclareGraphicsExtensions{.eps}
\fi

%strongly recommended

% Declare title and authors, without \thanks
%\newcommand{\TheTitle}{An Example Article} 
%\newcommand{\TheAuthors}{D. Doe, P. T. Frank, and J. E. Smith}
%
%% Sets running headers as well as PDF title and authors
%\headers{\TheTitle}{\TheAuthors}

% Title. If the supplement option is on, then "Supplementary Material"
% is automatically inserted before the title.
%\title{{\TheTitle}\thanks{Submitted to the editors DATE.
%\funding{This work was funded by the Fog Research Institute under contract no.~FRI-454.}}}
%
%% Authors: full names plus addresses.
%\author{
%  Dianne Doe\thanks{Imagination Corp., Chicago, IL
%    (\email{ddoe@imag.com}, \url{http://www.imag.com/\string~ddoe/}).}
%  \and
%  Paul T. Frank\thanks{Department of Applied Mathematics, Fictional
%    University, Boise, ID (\email{ptfrank@fictional.edu},
%    \email{jesmith@fictional.edu}).}
%  \and
%  Jane E. Smith\footnotemark[3]
%}

\usepackage{amsopn}
\DeclareMathOperator{\diag}{diag}
\newcommand{\sep}{,\,}

\DeclareFontFamily{U}{mathx}{\hyphenchar\font45}
\DeclareFontShape{U}{mathx}{m}{n}{
      <5> <6> <7> <8> <9> <10>
      <10.95> <12> <14.4> <17.28> <20.74> <24.88>
      mathx10
      }{}
\DeclareSymbolFont{mathx}{U}{mathx}{m}{n}
\DeclareFontSubstitution{U}{mathx}{m}{n}
\DeclareMathAccent{\widecheck}{0}{mathx}{"71}
\DeclareMathAccent{\wideparen}{0}{mathx}{"75}

\usepackage{mathrsfs}  

\newcommand{\grad}{\nabla}
\newcommand{\vc}[1]{\mathbf{#1}}

\newcommand{\N}{\mathbb{N}}
\newcommand{\R}{\mathbb{R}}

\renewcommand{\S}{\mathbb{S}}

\newcommand{\ps}[2]{\left\langle #1 , #2 \right\rangle}
\newcommand{\norm}[1]{\| #1 \|}
\newcommand{\nnorm}[1]{{\vert\kern-0.25ex\vert\kern-0.25ex\vert #1 
\vert\kern-0.25ex\vert\kern-0.25ex\vert}}
\renewcommand{\bar}[1]{\overline{#1}}
\newcommand{\sign}{\textup{sign}}

\renewcommand{\epsilon}{\varepsilon}

\newcommand{\bal}{\boldsymbol{\alpha}}
\newcommand{\bdel}{\boldsymbol{\delta}}
\newcommand{\bbe}{\boldsymbol{\beta}}

\newcommand{\bt}{\boldsymbol{\theta}}

\newcommand{\bsig}{\boldsymbol{\sigma}}

\newcommand{\blam}{\boldsymbol{\lambda}}
\newcommand{\bnu}{\boldsymbol{\nu}}

\newcommand{\bb}{\vc b}

\newcommand{\be}{\vc e}
\newcommand{\bp}{\vc p}

\newcommand{\bx}{\vc x}

\newcommand{\by}{\vc y}

\newcommand{\bz}{\vc z}

\newcommand{\bv}{\vc v}

\newcommand{\bu}{\vc u}

\newcommand{\bw}{\vc w}

\newcommand{\and}{\quad\text{and}\quad}

\newcommand{\E}{\mathcal{E}}

\newcommand{\kone}{\mathcal{K}}

\newcommand{\lek}{\leq_{\kone}}
\newcommand{\lekk}{\lneq_{\kone}}
\newcommand{\lekkk}{<_{\kone}}

\newcommand{\T}{\mathcal{T}}

\def\G{\mathcal{G}}

\newcommand{\krog}{\otimes}

\newcommand{\bj}{\vc j}

\newcommand{\I}{\mathcal I}
\newcommand{\saufzero}{\setminus\{0\}}

\newcommand{\ones}{\mathbf{1}}

\newcommand{\sauf}{\setminus}

\usepackage{soul} %%% use \st to strikeout some text

\newcommand{\cwlt}{\widehat{\operatorname{cw}}}
\newcommand{\cwut}{\widecheck{\operatorname{cw}}}

\newsiamthm{thm}{Theorem}
\newsiamthm{lem}{Lemma}
\newsiamthm{prop}{Proposition}
\newsiamthm{cor}{Corollary}
\newsiamremark{rmq}{Remark}
\newsiamremark{ex}{Example}
\newsiamremark{defi}{Definition}
\newsiamremark{prb}{Problem}
%%% Local Variables: 
%%% mode:latex
%%% TeX-master: "ex_article"
%%% End: 

\newcommand{\TheTitle}{A Unifying Perron-Frobenius Theorem for Nonnegative Tensors via Multi-homogeneous Maps} 
\newcommand{\TheAuthors}{A. Gautier, F. Tudisco, and M. Hein}

% Sets running headers as well as PDF title and authors
\headers{Perron-Frobenius Theorem for Nonnegative Tensors}{\TheAuthors}

% Title. If the supplement option is on, then "Supplementary Material"
% is automatically inserted before the title.
\title{{\TheTitle}\thanks{Author's accepted version: this is the peer-reviewed version of this manuscript, which is now   published on SIAM Journal on Matrix Analysis and Applications \url{https://doi.org/10.1137/18M1165049}.
\funding{This work has been funded by the ERC starting grant ``NOLEPRO'', no.\ 307793. The work of F.T. was funded by the European Union's Horizon 2020 research and innovation programme under the MarieSk\l odowska-Curie individual fellowship ``MAGNET'' grant agreement no.\ 744014.}}}

% Authors: full names plus addresses.
\author{
  Antoine Gautier\thanks{Department of Mathematics and Computer Science, Saarland University, 66041 Saarbr\"{u}cken, Germany
    (\email{ag@cs.uni-saarland.de},\email{hein@math.uni-sb.de}).}
  \and
 Francesco Tudisco\thanks{Department of Mathematics and Statistics, University of Strathclyde, G11XH Glasgow, UK (\email{f.tudisco@strath.ac.uk}).}
  \and
 Matthias Hein\footnotemark[2]
}

% The next statement enables references to information in the
% supplement. See the xr-hyperref package for details.

\externaldocument{ex_supplement}

\usepackage{mathtools}
% FundRef data to be entered by SIAM
%<funding-group>
%<award-group>
%<funding-source>
%<named-content content-type="funder-name"> 
%</named-content> 
%<named-content content-type="funder-identifier"> 
%</named-content>
%</funding-source>
%<award-id> </award-id>
%</award-group>
%</funding-group>
\newcommand{\dimensionalpartition}{shape partition}
\newcommand{\Dimensionalpartition}{Shape partition}

%% FOR VERTICAL EQUALITY SIGN
\usepackage{rotating}
\newcommand{\verteq}[0]{\begin{turn}{90}$=\,\,\,$\end{turn}}
%% %%%%%%%%%%%%%%%%%%%%%%%%%%%%%%%%%%

%\renewcommand{\tilde}{\widetilde}

%
{\color{magenta} }%
{\color{black} }

\begin{document}

\maketitle

% REQUIRED
\begin{abstract} 
We introduce the concept of \dimensionalpartition{} of a tensor and formulate a general tensor eigenvalue problem that includes all previously studied eigenvalue problems as special cases. We formulate irreducibility and symmetry properties of a nonnegative tensor $T$ in terms of the associated \dimensionalpartition{}. We recast the eigenvalue problem for $T$ as a fixed point problem on a suitable product of projective spaces. This allows us to use the theory of multi-homogeneous order-preserving maps to derive a new and  unifying Perron-Frobenius theorem for nonnegative tensors which either implies earlier results of this kind or improves them, as weaker assumptions are required. We introduce a general power method for the computation of the dominant tensor eigenpair, and provide a detailed convergence analysis. 
% by  detailed convergence guarantees and convergence rates.
\end{abstract}

% REQUIRED
\begin{keywords}
Perron-Frobenius theorem\sep nonnegative tensor\sep tensor power method\sep tensor eigenvalue\sep tensor singular value\sep tensor norm
\end{keywords}

% REQUIRED
\begin{AMS}
 47H07\sep % In Operator theory: Monotone and positive operators on ordered Banach spaces or other ordered topological vector spaces
 47J10\sep % In Oper. Theory: Nonlinear spectral theory, nonlinear eigenvalue problems 
 15B48\sep %In Lin and Multilin algebra: Positive matrices and their generalizations; cones of matrices  
 47H09\sep %In oper theory: Contraction-type mappings, nonexpansive mappings, $A$-proper mappings, etc.
 47H10 %In op theory: Fixed-point theorems
\end{AMS}
\section{Introduction}
Tensor eigenvalue problems have gained considerable attention in recent years as they arise in a number of relevant applications, such as best rank-one approximation in data analysis \cite{de2000best,Qi2007}, higher-order Markov chains \cite{NQZ}, solid mechanics and
the entanglement problem in quantum physics \cite{Chang_rect_eig,Qi_rect}, multi-layer network analysis \cite{arrigo2019multi,frafra}.   
A number of contributions have addressed relevant issues both form the theoretical and numerical point of view. 
The multi-dimensional nature of tensors naturally gives rise to a variety of eigenvalue problems. In fact, the classical
eigenvalue and singular value problems for a matrix can be generalized to the tensor setting following different constructions which lead to different notions of
eigenvalues and singular values for tensors, all of them reducing to the standard matrix case when the tensor is assumed to be of order two. Moreover, the extension of the power method to the  tensor setting, including certain shifted variants, is the best known method for the computation of tensor eigenpairs \cite{kolda2011shifted,regalia2000higher}. 

When the tensor has nonnegative entries, many authors have worked on tensor generalizations of the Perron-Frobenius theorem for matrices \cite{Boyd,Chang,Fried,Lim,NQZ}. In this setting, existence, uniqueness and maximality of positive eigenpairs of the tensor are discussed, in terms of certain irreducibility assumptions. Moreover, as for the matrix case,  Perron-Frobenius type results allow to address the global convergence of the power method for tensors with nonnegative entries \cite{Boyd,Chang_rect_eig,us,NQZ}.  

However, all the contributions that have appeared so far address particular cases of tensor spectral problems individually. 
In this work we  formulate a general tensor spectral problem which includes known formulations as special cases. Moreover, we prove a new Perron-Frobenius theorem for the general tensor eigenvalue problem which allows  to retrieve previous results as particular cases and, often, allows to significantly weaken the assumptions previously made. In addition, we prove the global convergence of a nonlinear version of the  power method that allows to compute the dominant eigenpair for general tensor eigenvalues, under mild assumptions on the tensor and with an explicit upper bound on the convergence rate.  

We first illustrate the discussed spectral problems for the special case of an entrywise nonnegative square tensor of order three, $T=(T_{i,j,k})\in \R^{N\times N \times N}$. %Note that we say, equivalently, that $T$ has three modes.  
We omit here most of the details, which are instead carefully discussed in the next sections. Let $f_T\colon\R^N\times\R^N\times \R^N\to \R$ denote the multilinear form induced by $T$, %namely
\begin{equation*}
f_T(\bx,\by,\bz)=\sum_{i,j,k=1}^N T_{i,j,k}x_{i}y_jz_k \qquad \forall \bx,\by,\bz\in\R^n\, , 
\end{equation*}
and, for  $p,q,r\in(1,\infty)$, consider the  following Rayleigh quotients: %associated to the tensor $T$:
\begin{equation}\label{defRR}
\Phi^{1}(\bx)= \frac{f_T(\bx,\bx,\bx)}{\norm{\bx}_p^3}, \quad\Phi^2(\bx,\by)=  \frac{f_T(\bx,\by,\by)}{\norm{\bx}_p\norm{\by}_q^2}, \quad\Phi^{3}(\bx,\by,\bz)= \frac{f_T(\bx,\by,\bz)}{\norm{\bx}_p\norm{\by}_q\norm{\bz}_r}.
\end{equation}

Note that, since the tensor is nonnegative and has odd order,  the maximum of $\Phi^i$ provides a notion of norm of $T$, for $i=1,2,3$. Furthermore, note that $\Phi^1,\Phi^2$ and $\Phi^3$ lead naturally to the %provide a natural motivation for the 
definition of $\ell^p$-eigenvectors, $\ell^{p,q}$-singular vectors and $\ell^{p,q,r}$-singular vectors of the tensor $T$ \cite{Lim}. These are respectively defined as the solutions of the following spectral equations
\begin{equation}\label{eqsys}
\T_1(\bx,\bx,\bx)=\lambda \psi_p(\bx), \quad \begin{cases} \T_1(\bx,\by,\by)=\lambda \psi_p(\bx),\\ \T_2(\bx,\by,\by)=\lambda\psi_q(\by),\end{cases} \quad  \begin{cases} \T_1(\bx,\by,\bz)=\lambda \psi_p(\bx),\\ 
\T_2(\bx,\by,\bz)=\lambda\psi_q(\by),\\ 
\T_3(\bx,\by,\bz)=\lambda\psi_r(\bz),\end{cases}
\end{equation}
where $\psi_p(\bx)=\frac{1}{p}\grad \norm{\bx}_p^{p}=(|x_1|^{p-2}x_1,\dots,|x_N|^{p-2}x_N)$ and, for $i=1,2,3$, the mapping $\T_i(\bx,\by,\bz)$ is the gradient of $\bx_i \mapsto f_T(\bx_1,\bx_2,\bx_3)$.

It is well known that the singular values of a matrix always admit a variational characterization, whereas the same holds for eigenvalues only if the matrix is symmetric. A similar situation occurs for tensors, where suitable symmetry assumptions on $T$ are required in order to relate  the critical points of the Rayleigh quotients in \eqref{defRR} with the solutions of the spectral equations in \eqref{eqsys}:
 If $T$ is super symmetric, i.e.\ the entries of $T$ are invariant under any permutation of its indices, then $\grad f_T(\bx,\bx,\bx)=3\T_1(\bx,\bx,\bx)$ and so the correspondence between the critical points of $\Phi^1$ and the solutions to $\T_1(\bx,\bx,\bx)=\lambda \psi_p(\bx)$ is clear. If $T$ is partially symmetric with respect to its second and third indices, i.e. $T_{i,j,k}=T_{i,k,j}$ for every $i,j,k\in[N]=\{1,\ldots,N\}$, then $\grad_{\bx} f_T(\bx,\by,\by)=\T_1(\bx,\by,\by)$ and $\grad_{\by} f_T(\bx,\by,\by)=2\T_2(\bx,\by,\by)$ and, again, it is not difficult to observe that the critical points of $\Phi^2$ coincide with the solutions to the second system in \eqref{eqsys}. Finally, the third system in \eqref{eqsys} always characterizes the critical points of $\Phi^3$ as $\grad f_T = (\T_1,\T_2,\T_3)$. This latter case is the analogue of the singular value problem for matrices.  

 In the case where $T$ does not have such symmetries, the critical points of $\Phi^1$ and $\Phi^2$ are solutions to spectral systems analogous to those in %similar spectral system as in 
 \eqref{eqsys} but where the mapping $\T_i$ is the gradient of $\bx_i \mapsto f_{S}(\bx_1,\bx_2,\bx_3)$ and $S\in\R^{N\times N \times N}_+$ is a symmetrized version of $T$ whose construction depends on the considered problem. We discuss this property in detail in Section \ref{tnormsec}. Note that this phenomenon is, again, aligned with the matrix case. In fact, the quadratic form associated to a matrix $M$ always coincides with the form associated with the symmetric matrix $(M^\top+M)/2$. 
Now, as $T$ has nonnegative entries, the triangle inequality implies that $|\Phi^1(\bx)|\leq \Phi^1(|\bx|)$ for every $\bx\in\R^N\saufzero$, where the absolute value is taken component wise. In particular, this implies that the maximum of $\Phi^1$ is attained in the nonnegative orthant $\R^N_+=\{\bx\in\R^N\mid x_i\geq 0, \ \forall i \in[N] \}$. Similar simple arguments show that the maxima of $\Phi^2$ and $\Phi^3$ are attained on nonnegative vectors as well. There is a vast literature on the study of the solutions to the systems in \eqref{eqsys} in the particular setting where $T$ is nonnegative. We refer to it as the Perron-Frobenius theory for nonnegative tensors \cite{NLA:NLA1902}. 
Typical results of the latter theory provide conditions on the parameters $p,q,r$ and on the irreducibility structure of the tensor $T$ to ensure the existence, the uniqueness and the maximality of positive solutions to particular cases of the systems in \eqref{eqsys}. These results come together with a number of Collatz-Wielandt type characterizations and with the convergence analysis of particular tensor versions of the power method. See for e.g.  \cite{Chang,Fried,us,Hu2014,linlks} and references therein.

In this paper, we address tensors of any order and propose a framework that allows us to unify the study of all spectral equations of the type shown in %same type as in 
\eqref{eqsys} and to prove a general Perron-Frobenius theorem which either improves the known results mentioned above or includes them as special cases. In particular, we give new conditions for the existence, uniqueness and maximality of positive eigenpairs for an ample class of tensor spectral equations, we prove new characterizations for the maximal eigenvalue and %. Finally, 
 we discuss the convergence of the power method including explicit rates of convergence. This is done by introducing %formulating 
a parametrization, which we call \textit{\dimensionalpartition{}}, %of these problems 
so that the three problems discussed in \eqref{eqsys} can be recovered with a suitable choice of the parameters. %We discuss in detail the connection between these equations and their corresponding Rayleigh quotients. 
Moreover, shape partitions allow us to introduce general definitions of weak and strong irreducibility, which both reduce to existing counter parts for suitable choices of the partition.
We discuss in detail the relationship between different types of irreducible %these classes of 
nonnegative tensors and we show how they are related for %. We also discuss how they relate for
 different spectral equations. 

 A particular contribution of this paper is that we reformulate the tensor spectral problems in terms of suitable multi-homogeneous maps and the associated fixed points on a product of projective spaces. Thus, based on our results in \cite{mhpfpaper}, we show that most of the tensor spectral problems correspond to a multi-homogeneous  mapping that is contractive with respect to a suitably defined projective metric. This relatively simple  observation turns out to be very relevant as it allows to systematically weaken the assumptions made in the   Perron-Frobenius literature for nonnegative tensors so far. The paper is written in a self-contained manner. However, for the proofs we rely heavily on our results from \cite{mhpfpaper}.
\section{Preliminaries}\label{intro}
In this section we fix the main notation and definitions that are required to formulate the Rayleigh quotients in \eqref{defRR} and the associated spectral problems in a unified fashion for the general case of a tensor of any order and with possibly different dimensions. %, each of whom having possibly different dimension.

Let $\R^{N_1\times\ldots\times N_m}_+$ be the set of entrywise nonnegative tensors in $\R^{N_1\times\ldots\times N_m}$. Let $T\in\R^{N_1\times\ldots\times N_m}_+$ and define the induced multilinear form $f_T\colon \R^{N_1}\times\ldots\times\R^{N_m}\to\R$ as %defined as
\begin{equation*}
f_T(\bz_1,\ldots,\bz_m) = \sum_{j_1\in[N_1],\,\ldots,\,j_m\in[N_m]} T_{j_1,\ldots,j_m}z_{1,j_1}z_{2,j_2}\cdots z_{m,j_m},
\end{equation*}
where $[N_i]=\{1,\ldots,N_i\}$ for all $i$. Furthermore, let us consider the gradient of $f_T$, that is let $\T=(\T_1,\ldots,\T_m)$ with $\T_i=(\T_{i,1},\ldots,\T_{i,N_i})$ and $\T_{i,j_i}\colon \R^{N_1}\times\ldots\times\R^{N_m}\to\R$ defined as
\begin{equation*}
\T_{i,j_i}(\bz_1,\ldots,\bz_m) = \sum_{\substack{j_1\in[N_1],\ldots,j_{i-1}\in[N_{i-1}]\\ j_{i+1}\in[N_{i+1}],\ldots,j_m\in[N_m]}} T_{j_1,\ldots,j_m}z_{1,j_1}\cdots z_{i-1,j_{i-1}}z_{i+1,j_{i+1}}\cdots z_{j_m,m}
\end{equation*}

As for the case of a square tensor of order three, described in the previous section, several  Rayleigh quotients and spectral equations can be associated to $T$. For instance, we have now up to $m$ different choices of the norms in the denominator of \eqref{defRR}. Moreover, various choices for the numerator are possible,  depending on how one partitions the dimensions of $\R^{N_1}\times \cdots \times\R^{N_m}$. In order to formalize these properties for a general tensor $T$, we introduce here the concept of  \textit{\dimensionalpartition{}}.

\begin{defi}[\Dimensionalpartition]\label{defsig}
We say that $\bsig$ is a \dimensionalpartition{} of $T\in\R^{N_1\times \ldots \times N_m}$ if $\bsig = \{\sigma_i\}_{i=1}^d$ is a partition of $[m]$, i.e.\ $\cup_{i=1}^d \sigma_i = [m]$ and $\sigma_i\cap \sigma_j=\emptyset$ for $i\neq j$, such that for every $i\in[d]$ and $j,j'\in\sigma_i$, it holds $N_j=N_{j'}$. Moreover, we always assume that:%where $\sigma_i\subset[m]$, $\forall\, i\in[d]$ and the following holds:
\begin{enumerate}[$\quad (a)$]
\item For every $i\in [d-1]$ and $j\in\sigma_i,k\in\sigma_{i+1}$ it holds $j\leq k$.\label{dimpart3}
\item If $d>1$, then $|\sigma_i|\leq |\sigma_{i+1}|$ for every $i\in[d-1]$.\label{dimpart4}
%\item \fra{$s_i=\min\{j\mid j\in\sigma_i\}$, $\nu_i=|\sigma_i|$, for any $i\in[d]$ and $s_{d+1}=m+1$.}\label{dimpart5}
\end{enumerate}
\end{defi}
Observe that the conditions \eqref{dimpart3} and \eqref{dimpart4} in the above definition are not restrictive. Indeed, if $\bsig = \{\sigma_i\}_{i=1}^d$ is a partition of $[m]$ such that  $N_j=N_{j'}$ for every $j,j'\in\sigma_i$ and  $i\in[d]$, then there exists a permutation $\pi\colon [m] \to [m]$ such that $\tilde \bsig=\{\tilde \sigma_i\}_{i=1}^d$ defined as $\tilde \sigma_i = \{\pi(j)\colon j \in \sigma_i\}, i \in[d]$ is a \dimensionalpartition{} of the tensor $\tilde T$ defined as $\tilde T_{j_1,\ldots,j_m}= T_{j_{\pi(1)},\ldots,j_{\pi(m)}}$ for all $j_1,\ldots,j_m$. For instance if $T\in \R^{2\times 3 \times 2}$ and $\bsig = \{\{1,3\},\{2\}\}$, then one can define $\tilde T_{i,j,k}=T_{i,k,j}$ for all $i,j,k$ and $\tilde \bsig = \{\{1,2\},\{3\}\}$.

\Dimensionalpartition{}s are useful and convenient for describing all spectral systems of the same form as \eqref{eqsys} but for tensors of any order. To a given \dimensionalpartition{} $\bsig=\{\sigma_i\}_{i=1}^d$ of $T\in\R^{N_1 \times \ldots \times N_m}$ we associate the numbers %following quantities:
%
%Let 
$s_1,\ldots,s_{d}$, $\nu_1,\ldots,\nu_{d}$, and $n_1,\ldots,n_d$ defined as follows:
\begin{equation}\label{defs}
\nu_i=|\sigma_i|, \qquad s_i=\min\{a\mid a\in\sigma_i\}, \qquad  n_i = N_{s_i} \qquad \forall i \in[d],
\end{equation}
and $s_{d+1}=m+1$. We will always assume the definitions in \eqref{defs}, although the reference to the specific $\bsig$ will be understood implicitly. Moreover, for convenience, we will very often use the $n_i$ in place of the $N_i$. The relation between these two numbers is made more clear by noting that the dimensions $N_1\times \cdots \times N_m$ of $T$ can be rewritten as follows: 
\begin{align*}
\underbrace{\overbrace{\begin{matrix} 
N_1 \!\!\!\! & \times \!\!\!\! & \ldots \!\!\!\! & \times \!\!\!\! & N_{s_2-1}\\
\verteq & & & & \verteq \\
n_1  \!\!\!\! & \times \!\!\!\! & \ldots \!\!\!\! & \times  \!\!\!\!& n_1
\end{matrix}}^{\sigma_1}}_{\nu_1 \text{ times}} \,\,\,  \begin{matrix} \times \\ {\color{white} \verteq }\\ \times \end{matrix}\,\,\, \underbrace{\overbrace{\begin{matrix} 
N_{s_2} \!\!\!\! & \times \!\!\!\! & \ldots \!\!\!\! & \times \!\!\!\! & N_{s_3-1}\\
\verteq & & & & \verteq \\
n_2  \!\!\!\! & \times \!\!\!\! & \ldots \!\!\!\! & \times  \!\!\!\!& n_2
\end{matrix}}^{\sigma_2}}_{\nu_2 \text{ times}} \,\,\, \begin{matrix} \times\!\!\!\! & \ldots\!\!\!\! & \times \\
{\color{white} \verteq } & & {\color{white} \verteq }\\
\times\!\!\!\! & \ldots\!\!\!\! & \times \end{matrix}\,\,\, \underbrace{\overbrace{\begin{matrix} 
N_{s_d} \!\!\!\! & \times \!\!\!\! & \ldots \!\!\!\! & \times \!\!\!\! & N_{s_{d+1}-1}\\
\verteq & & & & \verteq \\
n_d  \!\!\!\! & \times \!\!\!\! & \ldots \!\!\!\! & \times  \!\!\!\!& n_d
\end{matrix}}^{\sigma_d}}_{\nu_d \text{ times}}
\end{align*}

Now, given $\bp=(p_1,\ldots,p_d)\in(1,\infty)^d$ and the \dimensionalpartition{} $\bsig$ of $T$, we define the Rayleigh quotient of $T$ induced by $\bsig$ and $\bp$ as follows:
\begin{equation} \label{genprojnorm}
%\norm{T}_{(\bsig,\bp)}=\max_{\bx_1\in\R^{n_1}, \ldots, \bx_d\in\R^{n_d}} 
\Phi(\bx_1,\dots,\bx_d)=\frac{f_T(\bx^{[\bsig]})}{\norm{\bx_1}_{p_1}^{\nu_1}\,\norm{\bx_2}_{p_2}^{\nu_2} \cdots\norm{\bx_d}_{p_d}^{\nu_d}}
\end{equation}
%$1<p_1, \dots, p_d <\infty$ and 
\begin{equation*}
\text{where}\qquad \bx^{[\bsig]}=(\overbrace{\bx_1,\ldots,\bx_1}^{\nu_1 \text{ times}},\overbrace{\bx_2,\ldots,\bx_2}^{\nu_2 \text{ times}},\ldots,\overbrace{\bx_d,\ldots,\bx_d}^{\nu_d \text{ times}})\, .
\end{equation*}
In particular,  note that the funtions $\Phi^1,\Phi^2,\Phi^3$ of \eqref{defRR} can be recovered by setting $\bsig^1 = \{\{1,2,3\}\}, \bp=p$, $\bsig^2 = \{\{1\},\{2,3\}\}, \bp=(p,q)$ and $\bsig^3 = \{\{1\},\{2\},\{3\}\}, \bp=(p,q,r)$, respectively. 

The Rayleigh quotient \eqref{genprojnorm} is naturally related to a norm of the tensor which depends on both the \dimensionalpartition{} $\bsig$ and the choice of the norms $\|\cdot\|_{p_i}$. We denote such norm as $\norm{T}_{(\bsig,\bp)}=\max_{\bx_1, \ldots, \bx_d\neq 0}|\Phi(\bx_1,\dots,\bx_d)|$. Note that the absolute value in the definition of $\norm{T}_{(\bsig,\bp)}$ can be omitted when $T$ is nonnegative. In fact, as discussed in the introduction, if $T$ is nonnegative, then the maximum is always attained at nonnegative vectors. 
In the case $d=m$ and $p_1=\ldots=p_m=2$, $\norm{T}_{(\bsig,\bp)}$ is called the spectral norm of $T$ and it is known that its computation is NP-hard in general (c.f. \cite{Lim2013}). If $d=m=2$, then $\norm{T}_{(\bsig,\bp)}$ coincides with the $\ell^{p,q}$-norm of the matrix $T$ \cite{Boyd} and it is also known to be NP-hard for general matrices if, for instance,  $p_1=p_2\neq 1,2$ is a rational number or $1\leq p_1<p_2\leq \infty$, see e.g. \cite{hendrickx2010matrix,Steinberg}.

%\vskip1em

%------------------------------------------------------

A direct computation shows that the critical points of $\Phi$ in \eqref{genprojnorm} are solutions to the following spectral equation:
\begin{equation}\label{normspeceq}
\grad_{i} f_T(\bx^{[\bsig]}) = \lambda\psi_{p_i}(\bx_i), \qquad \norm{\bx_i}_{p_i}=1 \qquad \forall i \in [d],
\end{equation}
where $\grad_{i} f_T(\bx^{[\bsig]})\in\R^{n_i}$ denotes the gradient of the map $\bx_i\mapsto f_T(\bx^{[\bsig]})$, $\psi_{p_i}(\bx_i)=(|x_{i,1}|^{p_i-1}\sign(x_{i,1}),\ldots,|x_{i,n_i}|^{p_i-1}\sign(x_{i,n_i}))$ for all $\bx_i\in\R^{n_i}$ and $\sign(t)=t/|t|$ if $t\neq 0$ and $\sign(0)=0$. 

It is important to note that  $\grad_{i} f_T(\bx^{[\bsig]})$ and $\T_{s_i}(\bx^{[\bsig]})$ do not coincide in general,  unless $\nu_i=1$. 
Hence, we consider a more general class of spectral problems for tensors which is formulated as follows:
\begin{equation}\label{genspeceq}
\T_{s_i}(\bx^{[\bsig]}) = \lambda\psi_{p_i}(\bx_i), \qquad \norm{\bx_i}_{p_i}=1 \qquad \forall i \in [d].
\end{equation}

%--------------------------------------------------------------------------------

%\vskip1em

Depending on the choice of $\bsig$, various known spectral problems related to nonnegative tensors can be recovered from \eqref{genspeceq}. First, we note that if $m=2$ and $d=1$, then $T$ is matrix, $\bsig=\{\{1,2\}\}$ and with $p_1 = 2$ Equation \eqref{genspeceq} reduces to the standard eigenvector problem of $T$. If $m=2$ and $d=2$, then $T$ is matrix, $\bsig=\{\{1\},\{2\}\}$ and with $p_1=p_2=2$, \eqref{genspeceq} reduces to the standard singular vector problem of $T$. Furthermore, if $d=m$, then $\bsig=\{\{1\},\ldots,\{m\}\}$ and we recover equation (1.2) in \cite{Fried} which characterizes the $\ell^{p_1,\ldots,p_m}$-singular vectors of $T$. If $d=2$, then $\bsig=\{\{1,\ldots,k\},\{k+1,\ldots,m\}\}$ for some $k\in[m-1]$ and we recover equation (2) in \cite{Qi_rect} which characterizes the $\ell^{p_1,p_2}$-singular vectors of the rectangular tensor $T$. Finally, if $d=1$, then $\bsig=\{\{1,\ldots,m\}\}$ and we recover equation (7) in \cite{Lim} which characterizes the
$\ell^{p_1}$-eigenvectors of $T$. 
Perron-Frobenius type results have been established for each of the aforementioned spectral problems. In order to unify these results, we introduce here the following definition: 
\begin{defi}[$(\bsig, \bp)$-eigenvalues and eigenvectors]\label{defsigpeigen}
We say that $(\lambda,\bx)$ is a $(\bsig,\bp)$-eigenpair of $T$ if it satisfies \eqref{genspeceq}. We call $\lambda$ a $(\bsig,\bp)$-eigenvalue of $T$ and $\bx$ a $(\bsig,\bp)$-eigenvector of $T$. %The $(\bsig,\bp)$-spectral radius of $T$ is defined as follows:
\end{defi}

Key assumptions in the Perron-Frobenius theory of nonnegative tensor are strict nonnegativity, weak irreducibility and (strong) irreducibility. 
In order to address the general spectral problem of Definition \ref{defsigpeigen}, we recast 	such assumptions in terms of the chosen \dimensionalpartition{}.

\begin{defi}[$\bsig$-nonnegativity and $\bsig$-irreducibility]\label{defirr}
For a nonnegative tensor $T\in\R^{N_1\times \ldots \times N_m}_+$ and an associated \dimensionalpartition{} $\bsig=\{\sigma_i\}_{i=1}^d$, consider the matrix $M\in\R^{(n_1+\ldots+n_d)\times (n_1+\ldots+n_d)}_+$ defined as 
$$
M_{(i,t_i),(k,l_k)}=\frac{\partial}{\partial x_{k,l_k}}\T_{s_i,t_i}(\bx^{[\bsig]})|_{\bx = \ones}\qquad \forall (i,t_i),(k,l_k)\in\I^{\bsig}=\bigcup_{i=1}^d \{i\}\times [n_i],
$$
where $\ones = (1,\ldots,1)^\top$ is the vector of all ones.
We say that $T$ is:
\begin{itemize}
\item $\bsig$-strictly nonnegative, if $M$ has at least one nonzero entry per row. 
\item  $\bsig$-weakly irreducible, if $M$ is irreducible.
\item  $\bsig$-strongly irreducible, if for every $\bx \in \R^{n_1}_+\times \ldots \times\R_+^{n_d}$ that is not entry-wise positive and is such that $\bx_i\neq 0$ for all $i\in [d]$,  
there exists $(k,l_k)\in\I^{\bsig}$ such that $x_{k,l_k}=0$ and $\T_{s_k,l_k}(\bx^{[\bsig]})>0$.
\end{itemize}
\end{defi}

These definitions coincide with most of the corresponding definitions introduced for the individual special cases. Indeed, if $d=1$, $\bsig$-strict nonnegativity reduces to the definition of strictly nonnegative tensor introduced in \cite{Hu2014}. If $d=1,2,m$, $\bsig$-weak irreducibility reduces to the definition of weak irreducibility introduced in \cite{Fried} and \cite{Qi_rect}, respectively. If $d=1,m$, $\bsig$-strong irreducibility reduces to the existing definitions of irreducibility introduced in \cite{Chang} and \cite{Fried}. However, in the case $d=2$, $\bsig$-strong irreducibility is strictly less restrictive than the definition of irreducibility introduced in \cite{Chang_rect_eig}. 
In Section \ref{irrsec} we give a detailed characterization of  each of these classes of nonnegative tensors. In particular, we propose equivalent formulations of these classes of tensors in terms of graphs and in terms of the entries of $T$. Furthermore, we show in Theorem \ref{irrsowirr} that $\bsig$-strong irreducibility implies $\bsig$-weak irreducibility which itself implies $\bsig$-strict nonnegativity. We also study how these classes are related, for a fixed tensor $T$ but different choices of $\bsig$.

Using different \dimensionalpartition{}s, one can associate several spectral problems  to a tensor $T$ via Definition \ref{defsigpeigen} and sometimes one can transfer properties that hold true for one formulation to another one. 
For instance, if a symmetric matrix $Q\in\R^{n\times n}_+$ is irreducible, i.e. $Q$ is $\{\{1,2\}\}$-irreducible, then its corresponding bipartite graph is strongly connected, i.e. $Q$ is also $\{\{1\},\{2\}\}$-irreducible. In particular, this implies that the classical Perron-Frobenius theorem holds not only for the eigenpairs of $Q$ but also for its singular pairs. A similar situation arises %For the sake of similar discussions 
in the more general setting of tensors. In order to formalize this property,  we define the following partial order on the set of \dimensionalpartition{}s of $T$:
\begin{defi}\label{def:partial_order}
Let $\bsig=\{\sigma_i\}_{i=1}^d$, $\tilde \bsig=\{\tilde \sigma_i\}_{i=1}^{\tilde d}$ be two \dimensionalpartition{}s of $T\in\R^{N_1 \times \ldots \times N_m}$, then we write $ \bsig\sqsubseteq\tilde\bsig$ if $d\geq \tilde d$ and there exists $g\colon [d]\to [\tilde d]$ such that $\sigma_{i}\subset\tilde\sigma_{g(i)}$ for every $i\in[d]$. 
\end{defi}

Note, for example, that the \dimensionalpartition{}s $\bsig=\{\{1\},\{2\}\}$ and  $\tilde\bsig=\{\{1,2\}\}$ of the symmetric matrix $Q$ above, satisfy $\bsig\sqsubseteq\tilde \bsig$ and irreducibility with respect to $\bsig$ carries over to $\tilde \bsig$.
More generally, we discuss in Sections \ref{tnormsec} and \ref{sec:nonnegativeT} several properties of the tensor $T$ preserved by the partial ordering $\sqsubseteq$, that is properties that automatically hold for $\tilde \bsig$ when holding for a \dimensionalpartition{} $\bsig$ such that $\bsig \sqsubseteq \tilde \bsig$.  In particular, this is the case of tensor symmetries that we define below in terms of  $\bsig$. We first recall the  concept of partially symmetric tensors from \cite{SymFried1}:

\begin{defi}[Partially symmetric tensor, \cite{SymFried1}]\label{partsymtens}
Let $T\in\R^{N_1\times \ldots\times N_m}$ and let $\alpha\subset [m]$ be a subset of cardinality $2$ at least. We say that $T$ is symmetric with respect to $\alpha$ if $N_{i} = N_{i'}$ for each pair $\{{i},{i'}\}\subset \alpha$ and the value of $T_{j_1,\ldots,j_m}$ does not change if we interchange any two indices $j_{i},j_{i'}$ for $i,i'\in \alpha$ and any $j_k\in [N_{k}],k\in[m]$. By convention $T$ is symmetric with respect to each $\{i\}$ for $i\in[m]$. 
\end{defi}
\begin{defi}[$\bsig$-symmetry]
Let $T\in\R^{N_1\times \ldots\times N_m}$ and let $\bsig=\{\sigma_i\}_{i=1}^d$ be a \dimensionalpartition{} of $T$. We say that $T$ is $\bsig$-symmetric if it is partially symmetric with respect to $\sigma_i$ for all $i\in[d]$.
\end{defi}

Observe that, in particular, every matrix is $\{\{1\},\{2\}\}$-symmetric and symmetric matrices are 
$\{\{1,2\}\}$-symmetric. Moreover, if $T$ is $\bsig$-symmetric, then $T$ is $\tilde \bsig$-symmetric for every \dimensionalpartition{} $\tilde\bsig$ of $T$ such that $\tilde \bsig\sqsubseteq \bsig$.

%%%%%%%%%%%%%%%%%%%%%%%%%%%%%%%%%%%%%%%%%%%%%%%%%%%%%%%%%%%%%%%%%%%%%%%
\section{Main results}\label{mainresults}
%%%%%%%%%%%%%%%%%%%%%%%%%%%%%%%%%%%%%%%%%%%%%%%%%%%%%%%%%%%%%%%%%%%%%%%
In this section we describe the main results of this paper: A complete characterization of the irreducibility properties of $T$ in terms of the \dimensionalpartition{} $\bsig$; a unifying Perron-Frobenius theorem for the general tensor spectral problem of \eqref{genspeceq}; and  a generalized power method  with a linear convergence rates that allows to compute the dominant $(\bsig,\bp)$-eigenvalue and $(\bsig,\bp)$-eigenvector of $T$.
These results are based on a number of preliminary lemmas and results that we prove in the next sections. Thus, for the sake of readability, we postpone the poofs of the main results to the end of the paper. We devote this section to describe the results and to relate them with previous work. 

The first result is presented in the following: 
\begin{thm}\label{irrthm}
Let $T\in\R^{N_1\times \ldots\times N_m}_+$ and let $\bsig=\{\sigma_i\}_{i=1}^d$ and $\tilde \bsig=\{\tilde \sigma_i\}_{i=1}^{\tilde d}$ be \dimensionalpartition{}s of $T$ such that $\bsig \sqsubseteq\tilde \bsig$. Then, the following holds:
\begin{enumerate}[(i)]
\item If $T$ is $\bsig$-weakly irreducible, then $T$ is $\bsig$-strictly nonnegative.\label{irrthm0}
\item If $T$ is $\bsig$-strongly irreducible, then $T$ is $\bsig$-weakly irreducible.\label{irrthm1}
\item If $T$ is $\bsig$-strictly nonnegative, then $T$ is $\tilde\bsig$-strictly nonnegative.\label{irrthm2}
\item If $T$ is $\bsig$-weakly irreducible and $\tilde\bsig$-symmetric, then $T$ is $\tilde\bsig$-weakly irreducible.\label{irrthm3}
\item If $T$ is $\bsig$-strongly irreducible and $\tilde\bsig$-symmetric, then $T$ is $\tilde\bsig$-strongly irreducible.\label{irrthm4}
\end{enumerate}
\end{thm}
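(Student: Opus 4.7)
The plan is to address the five items separately. Items (i) and (ii) concern a single shape partition $\bsig$ and follow from structural properties of the nonnegative matrix $M$; items (iii)--(v) compare two partitions $\bsig \sqsubseteq \tilde\bsig$ and rest on the relationship between $M^{\bsig}$ and $M^{\tilde \bsig}$ obtained by the chain rule when the $\bx$-variables of the coarser partition are identified across $\pi$-fibers.

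For (i), irreducibility of the nonnegative matrix $M$ forces its directed graph to be strongly connected, so every vertex has an outgoing edge; equivalently, every row of $M$ carries a nonzero entry, which is precisely $\bsig$-strict nonnegativity. For (ii) I argue by contrapositive: if $M$ is reducible, its directed graph admits a proper nonempty forward-closed subset $S\subsetneq\I^{\bsig}$, which I choose as the complement of a sink strongly connected component of the condensation. Setting $x_{k,l}=0$ on $S$ and $x_{k,l}=1$ on $\bar S$, a direct expansion of $\T_{k,l_k}(\bx^{[\bsig]})$ in terms of the entries of $T$ shows that every monomial contributing to this sum must carry a factor $x_{\phi(l),j_l}=0$ whenever $(k,l_k)\in S$, by the forward-closedness of $S$; hence $\T_{k,l_k}(\bx^{[\bsig]})=0$ on all of $S$. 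To meet the constraint $\bx_i\neq 0$ for every $i\in[d]$, I exploit that $\bsig$-strong irreducibility implies $\bsig$-strict nonnegativity (a dual row/column argument on $M$), leaving enough flexibility in the choice of $S$ so that $\bar S$ meets every fiber $\{i\}\times[n_i]$. Such $\bx$ then witnesses the failure of $\bsig$-strong irreducibility.

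Items (iii)--(v) share the observation that under the substitution $\bx_k=\tilde \bx_{\pi(k)}$ one has $\bx^{[\bsig]}=\tilde\bx^{[\tilde\bsig]}$, whence the chain rule yields, for any polynomial $f$, the identity $\partial_{\tilde x_{\tilde k,\tilde l}} f(\tilde\bx^{[\tilde\bsig]})\big|_{\tilde\bx=\ones}=\sum_{k\,:\,\pi(k)=\tilde k}\partial_{x_{k,\tilde l}}f(\bx^{[\bsig]})\big|_{\bx=\ones}$. For (iii), the unique $i^*$ with $\tilde s_j\in\sigma_{i^*}$ satisfies $s_{i^*}=\tilde s_j$ by minimality within $\tilde\sigma_j$, so $\T_{\tilde s_j,t}=\T_{s_{i^*},t}$ as polynomials; applying the chain-rule identity to $f=\T_{\tilde s_j,t}$ expresses row $(j,t)$ of $M^{\tilde\bsig}$ as a column-aggregation of row $(i^*,t)$ of $M^{\bsig}$, so a nonzero entry in the latter forces a nonzero entry in the former. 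For (iv)--(v), $\tilde\bsig$-symmetry ensures $\T_{l,t}=\T_{l',t}$ for all $l,l'\in\tilde\sigma_j$, so every choice of representative yields the same polynomial; the chain-rule identity then realises $M^{\tilde\bsig}$ as a genuine quotient of $M^{\bsig}$ under the projection $\Pi:\I^{\bsig}\to\I^{\tilde\bsig}$, $(i,t)\mapsto(\pi(i),t)$. Since the quotient of a strongly connected directed graph is strongly connected, (iv) follows. For (v), any partial-support vector $\tilde\bx$ on $\I^{\tilde\bsig}$ lifts to $\bx_k=\tilde\bx_{\pi(k)}$, which is still partial-support with $\bx_i\neq 0$ for every $i\in[d]$; $\bsig$-strong irreducibility then provides a witness $(k,l_k)$ with $x_{k,l_k}=0$ and $\T_{s_k,l_k}(\bx^{[\bsig]})>0$, and the symmetry identity translates this into the witness $(\pi(k),l_k)\in\I^{\tilde\bsig}$ for $\tilde\bx$.

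The main technical obstacle I foresee is in (ii): arranging the reducing subset $S$ so that $\bar S$ meets every fiber, which requires a fiber-wise analysis of the condensation of $M$'s graph and genuinely uses $\bsig$-strict nonnegativity to rule out degenerate fibers. All other items reduce to clean applications of the chain-rule identity together with the symmetry equality $\T_{l,t}=\T_{l',t}$ within a $\tilde\bsig$-group.
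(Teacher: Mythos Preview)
Your overall strategy matches the paper's: items (i), (iii), (iv), (v) are handled essentially as in the text (the paper proves (iv) and (v) through Lemmas~\ref{wirrimpl} and~\ref{sirrimpl}, which amount to the edge push-forward/quotient argument you describe; your direct lifting in (v) is in fact slightly cleaner than the paper's detour through the iterate characterization of Lemma~\ref{characirr}). The issue is in (ii).

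Your choice of $S$ is wrong: the complement of a sink strongly connected component is \emph{not} forward-closed. If $C$ is a sink SCC, edges from $\bar C$ into $C$ are certainly allowed, so for $(k,l_k)\in S=\bar C$ a monomial of $\T_{k,l_k}(\bx^{[\bsig]})$ can have all its factors indexed in $C=\bar S$, hence be positive. For a concrete failure take $d=1$, $m=2$, $T=\big(\begin{smallmatrix}0&1&0\\0&0&1\\0&1&0\end{smallmatrix}\big)$: the unique sink SCC is $\{2,3\}$, your $S=\{1\}$, and $\T_{1,1}(\bx,\bx)=x_2=1\neq 0$. The fix is simply to take $S$ equal to a sink SCC itself (equivalently the complement of a source SCC); then $S$ is out-closed and your monomial argument goes through verbatim. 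The remaining step --- ensuring $\bar S$ meets every fiber --- does follow from strict nonnegativity, but not by a ``dual row/column argument on $M$'': for $d\geq 2$, if an out-closed $S$ contained all of $\{\bar k\}\times[n_{\bar k}]$ and $(i,t_i)\in\bar S$ with $i\neq\bar k$, strict nonnegativity gives an entry $T_{j_1,\dots,j_m}>0$ with $j_{s_i}=t_i$, which produces an edge $(\bar k,j_{s_{\bar k}})\to(i,t_i)$ out of $S$, a contradiction. This is precisely the paper's Case~2 in the proof of Lemma~\ref{irrsowirr}, so once you correct the direction the two arguments coincide.
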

\begin{proof}
See Section \ref{irrsec}.
\end{proof}
%As in the matrix case, the irreducibility of a tensor do not depend on the magnitude of its entries. Hence, it is enough in \eqref{irrthm3} and \eqref{irrthm4} to assume that the nonzero pattern of the entries of $T$ is $\tilde \bsig$-symmetric. 
Few comments regarding the partial symmetry assumption in \eqref{irrthm3} and \eqref{irrthm4} of the above theorem are in order: First, note that, as in the matrix case, the irreducibility of a tensor does not depend on the magnitude of its entries and so it is enough to assume that the nonzero pattern of $T$ is $\tilde \bsig$-symmetric. Second, by giving explicit examples, we note in Remarks \ref{sirrcex} and \ref{stirrcex} that the $\bsig$-symmetry assumption in \eqref{irrthm3} and \eqref{irrthm4}  can not be omitted in general. 

It is well known that in the case of nonnegative matrices, i.e. $m=2$ and $d=1$, $\bsig$-weak irreducibility and $\bsig$-strong irreducibility are equivalent. This equivalence is proved also for $m=2$ and $d=2$ in \cite[Lemma 3.1]{Fried}.
Furthermore, \eqref{irrthm0}, \eqref{irrthm1} are known for the particular cases $d=1$ and $d=m$. Precisely,  refer to \cite[Lemma 3.1]{Fried} for an equivalent of \eqref{irrthm1} and to  \cite[Proposition 8, (b)]{us} and  \cite[Corollary 2.1]{Hu2014} for an equivalent of \eqref{irrthm0} in the cases $d=1$ and $d=m$, respectively.
However, to our knowledge, the results of points \eqref{irrthm2}, \eqref{irrthm3}, \eqref{irrthm4} have not been proved before, in any setting.

\begin{figure}[t]
    %\begin{center}
    %\begin{tabular}{ccc}
    \includegraphics[width=.3\textwidth,clip,trim=.0cm 0cm .0cm .0cm]{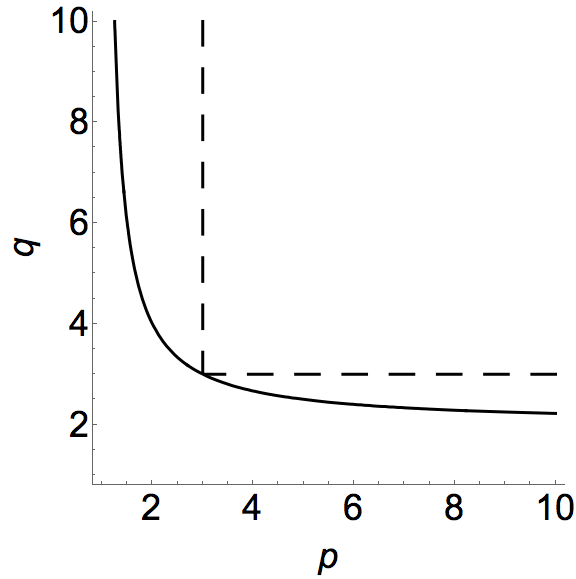}\hfill 
    \includegraphics[width=.26\textwidth,clip,trim=.0cm 0 .0cm 0]{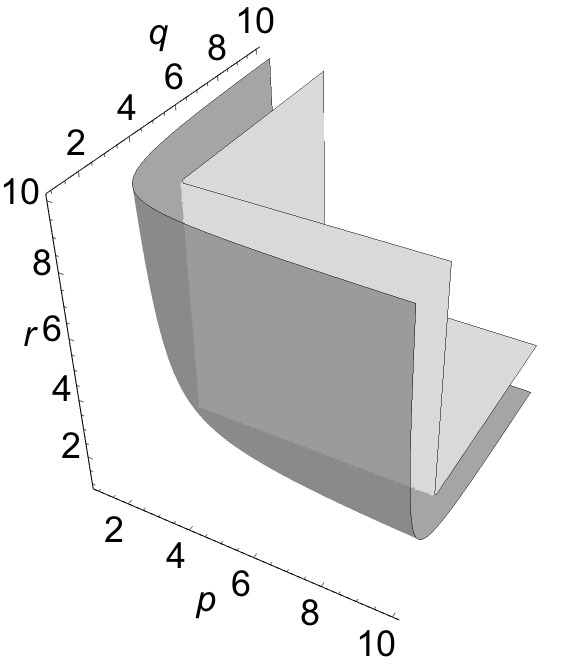}\hfill
    \includegraphics[width=.3\textwidth,clip,trim=.0cm 0cm .0cm .0cm]{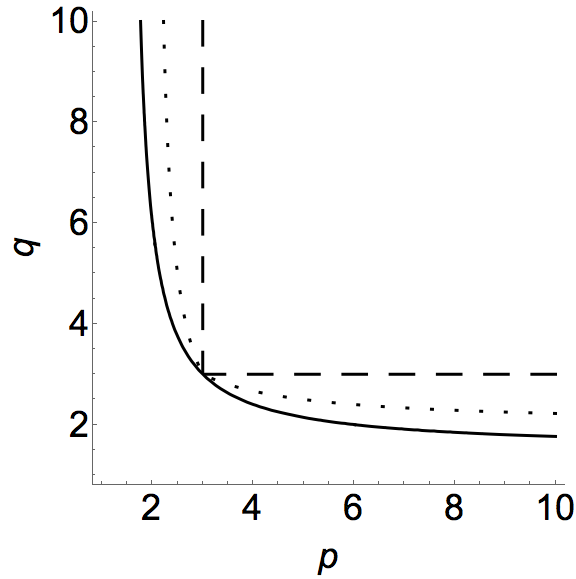}
    %\end{tabular}
    \caption{Conditions on $(p_1,p_2,p_3)=(p,q,r)$ for different settings involving a tensor of order $3$. The figure shows that generally, $\rho(A)\leq 1$ implies a less restrictive condition on $p,q,r$ than the previous existing ones. \textbf{Left:} Here $d=2$ so that $\bsig=\{\{1\},\{2,3\}\}$. The plain line is the set of $(p,q)$ such that $\rho(A)=1$ and the dashed line is the set of $(p,q)$ such that $\min\{p,q\}=3$ \cite{Qi_rect}. \textbf{Middle:} Here $d=3$ so that  $\bsig=\{\{1\},\{2\},\{3\}\}$. The dark gray surface is the set of $(p,q,r)$ such that $\rho(A)=1$ and the light gray surface is the set of $(p,q,r)$ such that $\min\{p,q,r\}=3$ \cite{Fried,Lim}. \textbf{Right:} Here $d=3$ again and $p$ is fixed to $p=3$. The plain line is the set of $(q,r)$ such that $\rho(A)=1$, the dotted line is the set of $(q,r)$ for which there exists $a\in\{p,q,r\}$ such that $2a\leq b(a-1)$ for all $b\in\{p,q,r\}\setminus\{a\}$ \cite{us} and the dashed line the set of $(q,r)$ such that $\min\{p,q,r\}=3$ \cite{Fried,Lim}.}
    \label{figurecond}
    %\end{center}
\end{figure}

Our second result is a new and unifying Perron-Frobenius theorem for $(\bsig,\bp)$-eigenpairs. First, let us consider the sets of nonnegative, nonnegative nonzero and positive tuples of vectors in $\R^{n_1}\times \ldots\times \R^{n_d}$, that is:
let $\kone_{+}^{\bsig}=\R^{n_1}_+\times \ldots \times\R_+^{n_d}$, $\kone^{\bsig}_{+,0}=\{\bx\in\kone^{\bsig}_+\mid \bx_i\neq 0, i\in[d]\}$ and let  $\kone^{\bsig}_{++}$ be the interior of $\kone_{+}^{\bsig}$. Furthermore,
let us define the $(\bsig,\bp)$-spectral radius of $T$: %defined as follows:
\begin{equation}\label{defspecrad}
r^{(\bsig,\bp)}(T) = \sup\big\{|\lambda| \,:\, \lambda \text{ is a }(\bsig,\bp)\text{-eigenvalue of }T\big\}.
\end{equation}
Note that if $m=2$, $d=1$ and $p_1=2$, then $r^{(\bsig,\bp)}(T)$ coincide with the spectral radius of the matrix $T$ and if $m=2$, $d=2$ and $p_1=p_2=2$, then $r^{(\bsig,\bp)}(T)$ coincide with the largest singular value of the matrix $T$.
%We show in Lemma \ref{invsym} that if $T$ is $\bsig$-symmetric \fra{and nonnegative} then it holds $r^{(\bsig,\bp)}(T) =\norm{T}_{(\bsig,\bp)}$. 
As mentioned before, the key of our Perron-Frobenius theorem is the relation with the theory of multi-homogeneous and order-preserving mappings \cite{mhpfpaper}. In particular, let us consider
%We further consider
$F^{(\bsig,\bp)}\colon\kone_{+}^{\bsig}\to\kone_{+}^{\bsig}$ defined as $F^{(\bsig,\bp)}=(F_1^{(\bsig,\bp)},\ldots,F^{(\bsig,\bp)}_d)$ where $ F_i^{(\bsig,\bp)}=(F_{i,1}^{(\bsig,\bp)},\ldots,F_{i,n_i}^{(\bsig,\bp)})$ and, for all $(i,j_i)\in\I^{\bsig}$, 
\begin{equation}\label{defF}
F_{i,j_i}^{(\bsig,\bp)}(\bx) = \big(\T_{s_i,j_i}(\bx^{[\bsig]})\big)^{p_i'-1}, \qquad \text{with}\quad p_i'=\frac{p_i}{p_i-1}\, .
\end{equation}

We show in Lemma \ref{mhconnect} that the nonnegative $(\bsig,\bp)$-eigenpairs of $T$ are in bijection with the multi-homogeneous eigenvectors of $F^{(\bsig,\bp)}$, i.e. vectors $\bx\in\kone_{+,0}^{\bsig}$ for which there exists $\theta_1,\ldots,\theta_d\geq 0$ such that $F_i^{(\bsig,\bp)}(\bx)=\theta_i\bx_i$ for all $i\in[d]$. This key observation allows us to exploit the results proved in \cite{mhpfpaper}. In particular, we consider the homogeneity matrix $A(\bsig,\bp)\in\R_+^{d\times d}$ of $F^{(\bsig,\bp)}$ given as %we know from Lemma 3.2 in \cite{mhpfpaper} that for $A=A(\bsig,\bp)\in\R^{d\times d}$ defined as
\begin{equation} \label{defA_first}
A(\bsig,\bp)= \diag(p_1'-1,\ldots,p_d'-1)(\ones\bnu^\top-I), \qquad \bnu=(|\sigma_1|,\ldots,|\sigma_d|)^\top,
\end{equation}
and let $\rho(A(\bsig,\bp))$ be its spectral radius. 
In the following, $A(\bsig,\bp)$ always refers to the homogeneity matrix of $F^{(\bsig,\bp)}$, hence, when it is clear from the context, we omit the arguments $(\bsig,\bp)$ and write $A$ instead of $A(\bsig,\bp)$. Note that the homogeneity matrix $A$ is always nonnegative and irreducible. Therefore, there exists a unique positive eigenvector $\bb$ such that $A^\top \bb =\rho(A)\bb$ with $\sum_{i=1}^d b_i =1$. Throughout the whole paper we will always devote the symbol $\bb$ to denote such a vector. 

Lemma 3.2 in \cite{mhpfpaper} implies that $\rho(A)$ is an upper bound on the Lipschitz constant of $F^{(\bsig,\bp)}$ with respect to a suitable weighted Hilbert  metric on $\kone^{\bsig}_{++}$.
Therefore, when $\bp$ and $\bsig$ are such that $\rho(A)\leq 1$, we can recast the $(\bsig,\bp)$-eigenvalue problem for $T$ in terms of the multi-homogeneous eigenvectors of a non-expansive map and derive the Perron-Frobenius theorem for $T$ as a consequence. 
In the particular cases $d=1$, $d=2$ and $d=m$, typical assumptions 
on $p_1,\ldots,p_m$ found in the literature on Perron-Frobenius theory of nonnegative tensors are $p_i\geq m$ for every $i\in[d]$, \cite{Fried,Lim,Qi_rect}. It is not difficult to see that if $p_i\geq m$ for all $i$, then %can be verified that when the latter is satisfied, then 
$\rho(A)\leq 1$, with equality if and only if $p_1=\ldots=p_m=m$. 
However, by the Collatz-Wielandt formula, we have
%\begin{equation*}
$\rho(A)=\min_{\bv\in\R^d_{++}}\max_{i\in[d]}(A^\top\bv)_i/v_i$, 
%\end{equation*} 
and thus it is clear that there are many choices of $p_1,\ldots,p_d$ such that $\rho(A)\leq 1$ but $\min_{i\in[d]}p_i<m$.  Moreover, note that, as $A(\bsig,\bp)$ is irreducible, the function $(p_1,\ldots,p_d)\mapsto \rho(A(\bsig,\bp))$ is strictly monotonically decreasing in the sense that for every $\bp,\tilde \bp\in(1,\infty)^d$ with $\tilde p_i \leq p_i$ for all $i\in[d]$, it holds $\rho(A(\bsig,\tilde\bp))\geq\rho(A(\bsig,\bp))$ with equality if and only if $\bp=\tilde \bp$. An example comparing $\rho(A)\leq 1$ with the conditions on $p_1,\ldots,p_d$ given in \cite{Fried,us,Lim,Qi_rect} is shown in Figure \ref{figurecond}.

Our new Perron-Frobenius theorem consists of five parts: The first one is a weak Perron-Frobenius theorem ensuring the existence of a maximal nonnegative $(\bsig,\bp)$-eigenpair. The second characterizes $r^{(\bsig,\bp)}(T)$ via a Collatz-Wielandt formula, a Gelfand type formula and a cone spectral radius formula. The third part, 
gives sufficient conditions for the existence of a positive $(\bsig,\bp)$-eigenpair. The fourth part, gives conditions ensuring that $(\bsig,\bp)$-eigenvectors which are nonnegative but not positive can not correspond to $r^{(\bsig,\bp)}(T)$. The last part gives further conditions which guarantee that $T$ has a unique nonnegative $(\bsig,\bp)$-eigenvector.

Let us denote by $(F^{(\bsig,\bp)})^k$ the $k$-th composition of $F^{(\bsig,\bp)}$ with itself, that is $(F^{(\bsig,\bp)})^1(\bx)=F^{(\bsig,\bp)}(\bx)$ and $(F^{(\bsig,\bp)})^{k+1}(\bx)=F^{(\bsig,\bp)}((F^{(\bsig,\bp)})^{k}(\bx))$ for $k=1,2,\ldots$ %Note that if $p_1=\ldots = p_d=2$, then $F^{(\bsig,\bp)}=R^{\bsig}$.
Moreover, let us define the following product of balls 
$\S_+^{(\bp,\bsig)}=\big\{\bx\in\kone_{+}^{\bsig}\ \big|\ \norm{\bx_i}_{p_i}=1, \ \forall i \in[d]\big\}$ 
and its positive part $\S^{(\bp,\bsig)}_{++}=\S^{(\bp,\bsig)}_{+}\cap\kone_{++}^{\bsig}$.

\begin{thm}\label{PFtheory}
Let $\bsig=\{\sigma_i\}_{i=1}^d$ be a \dimensionalpartition{} of $T\in\R^{N_1\times \ldots\times N_m}_+$. Furthermore let $r^{(\bsig,\bp)}(T)$, $F^{(\bsig,\bp)}$, $A$ and $\bb$ be as in \eqref{defspecrad}, \eqref{defF} and \eqref{defA_first}, respectively. Suppose that $T$ is $\bsig$-strictly nonnegative and $\rho(A)\leq 1$. Then, the following properties hold: %there exists a unique $\bb\in\R^d_{++}$ such that $A^\top\bb=\rho(A)\bb$ and $\sum_{i=1}^db_i=1$. Furthermore, we have the following:
\begin{enumerate}[(i)]
\item There exists a $(\bsig,\bp)$-eigenpair $(\lambda,\bu)\in\R_+\times \kone_{+,0}^{\bsig}$ of $T$ such that $\lambda =r^{(\bsig,\bp)}(T)$. \label{PFweakexists}
\item
Let 
$\gamma = \frac{\sum_{i=1}^d b_ip_i'}{\sum_{i=1}^d b_ip_i'-1},$
then $\gamma \in (1,\infty)$ and the following Collatz-Wielandt formula holds:\label{PFweak}
\begin{align}\label{finalCW}
 \inf_{\bx\in\S_{++}^{(\bp,\bsig)}} \cwut(F^{(\bsig,\bp)},\bx) \, &=\, r^{(\bsig,\bp)}(T) \, =\, \max_{\by\in\S_+^{(\bp,\bsig)}}\cwlt(F^{(\bsig,\bp)},\by) \\
\text{where} \qquad
%\begin{equation*}
\cwut(F^{(\bsig,\bp)},\bx)&=\prod_{i=1}^d\Big(\max_{j_i\in[n_i]}\frac{F^{(\bsig,\bp)}_{i,j_i}(\bx)}{x_{i,j_i}}\Big)^{(\gamma-1)b_i}\notag\\ %\qquad \forall \bx\in\S_{++}^{(\bp,\bsig)}
 \text{and}\qquad \cwlt(F^{(\bsig,\bp)},\by) &=\prod_{i=1}^d\Big(\min_{\substack{j_i\in[n_i], \ y_{i,j_i}>0}}\frac{F_{i,j_i}^{(\bsig,\bp)}(\by)}{y_{i,j_i}}\Big)^{(\gamma-1)b_i}.\notag % \qquad\forall \by\in\S_+^{(\bp,\bsig)}.
\end{align}
If additionally, $\rho(A)=1$, then it holds
\begin{align}\label{Gelf}
r^{(\bsig,\bp)}(T)&= \sup_{\bz\in\kone_{+,0}^{\bsig}}\limsup_{k\to\infty}\Big(\prod_{i=1}^d\norm{(F^{(\bsig,\bp)})^k_i(\bz)}_{p_i}^{b_i}\Big)^{\frac{\gamma-1}{k}}\\ &=\lim_{k\to\infty} \Big( \sup_{\bz\in\S^{(\bsig,\bp)}_{+}}\prod_{i=1}^d\norm{(F^{(\bsig,\bp)})^k_i(\bz)}_{p_i}^{b_i}\Big)^{\frac{\gamma-1}{k}}.\notag
\end{align}
\item If either $\rho(A)<1$ or $T$ is $\bsig$-weakly irreducible, then the $(\bsig,\bp)$-eigenvector $\bu$ of \eqref{PFweakexists} can be chosen to be strictly positive, i.e. $\bu\in\kone_{++}^{\bsig}$. Moreover, $\bu$ is then the unique positive $(\bsig,\bp)$-eigenvector of $T$.\label{uniquepos}
\item If $T$ is $\bsig$-weakly irreducible, then for every $(\bsig,\bp)$-eigenpair $(\vartheta,\bx)$ of $T$ such that $\bx\in \kone_{+,0}\setminus\kone_{++}$, it holds $\vartheta <  r^{(\bsig,\bp)}(T)$.\label{stricteval}
\item If $T$ is $\bsig$-strongly irreducible, then the $(\bsig,\bp)$-eigenvector $\bu$ of \eqref{PFweakexists} is positive and it is the unique nonnegative $(\bsig,\bp)$-eigenvector of $T$.\label{irrres}
\end{enumerate}
\end{thm}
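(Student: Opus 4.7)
My approach is to translate the $(\bsig,\bp)$-eigenvalue problem for $T$ into a multi-homogeneous fixed point problem for $F^{(\bsig,\bp)}$ via Lemma \ref{mhconnect}, and then to invoke the general Perron-Frobenius theory for multi-homogeneous order-preserving maps developed in \cite{mhpfpaper}. A direct computation from \eqref{defF} shows that $F^{(\bsig,\bp)}$ is order-preserving and $A$-multi-homogeneous. Since $p_i'-1>0$ for every $i$ and the matrix $\bnu\ones^\top - I$ has strictly positive off-diagonal entries, $A$ itself is irreducible, so the classical Perron-Frobenius theorem applied to $A^\top$ produces the unique normalized positive Perron vector $\bb\in\R_{++}^d$. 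The crucial consequence of $\rho(A)\leq 1$, via Lemma 3.2 of \cite{mhpfpaper}, is that $F^{(\bsig,\bp)}$ is non-expansive on $\kone_{++}^{\bsig}$ with respect to a Hilbert-type projective metric weighted by $\bb$.

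For part (\ref{PFweakexists}), I would apply a Brouwer-type fixed point argument to the normalized map $\widetilde{F}(\bx)_i = F_i^{(\bsig,\bp)}(\bx)/\|F_i^{(\bsig,\bp)}(\bx)\|_{p_i}$ acting on the compact convex set $\S_+^{(\bsig,\bp)}$. The $\bsig$-strict nonnegativity of $T$ ensures that each $F_i^{(\bsig,\bp)}(\bx)$ is non-zero on this set, so $\widetilde{F}$ is well-defined and continuous, and a fixed point yields, via Lemma \ref{mhconnect}, a nonnegative $(\bsig,\bp)$-eigenpair $(\lambda,\bu)$. The Collatz-Wielandt identity \eqref{finalCW} of part (\ref{PFweak}) is the translation of the analogous formula for non-expansive multi-homogeneous maps in \cite{mhpfpaper}; the weights $(\gamma-1)b_i$ and the exponent $\gamma$ itself arise from the Perron vector $\bb$ of $A^\top$ together with the homogeneity degrees $p_i'-1$ and the rule by which the multi-homogeneous eigenvalues $\theta_i$ of $F^{(\bsig,\bp)}$ aggregate to the scalar $\lambda$. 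The maximality $\lambda=r^{(\bsig,\bp)}(T)$ then follows by sandwiching every $(\bsig,\bp)$-eigenvalue between the inf and max in \eqref{finalCW}. The Gelfand-type identities \eqref{Gelf}, valid under the equality $\rho(A)=1$, come from the cone spectral radius characterization of \cite{mhpfpaper} applied to the iterates $(F^{(\bsig,\bp)})^k$.

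For parts (\ref{uniquepos})--(\ref{irrres}), the plan is to translate the irreducibility conditions of Definition \ref{defirr} into the hypotheses used in \cite{mhpfpaper}. When $\rho(A)<1$, $F^{(\bsig,\bp)}$ is a strict contraction in the weighted Hilbert metric, so the Banach fixed point theorem on $\kone_{++}^{\bsig}$ produces a unique positive eigenvector. When $T$ is $\bsig$-weakly irreducible, the matrix $M$ of Definition \ref{defirr} has the same sparsity pattern as the Jacobian of $F^{(\bsig,\bp)}$ at $\ones$, and its irreducibility is precisely the weak-irreducibility hypothesis used in \cite{mhpfpaper} to force any nonnegative eigenvector into $\kone_{++}^{\bsig}$ and to establish its uniqueness there, proving (\ref{uniquepos}). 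Part (\ref{stricteval}) follows by observing that a nonnegative eigenvector with some zero coordinate yields, via weak irreducibility, a strictly smaller value of $\cwlt(F^{(\bsig,\bp)},\bx)$ than $r^{(\bsig,\bp)}(T)$, forcing strict inequality for its eigenvalue through \eqref{finalCW}. Finally, (\ref{irrres}) is a direct contradiction argument: a nonnegative eigenvector $\bx\in\kone_{+,0}^{\bsig}\setminus\kone_{++}^{\bsig}$ would, by $\bsig$-strong irreducibility, produce an index $(k,l_k)$ with $x_{k,l_k}=0$ and $\T_{k,l_k}(\bx^{[\bsig]})>0$, contradicting \eqref{genspeceq}.

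The main obstacle I anticipate is the bookkeeping required to relate the scalar tensor eigenvalue $\lambda$ to the multi-homogeneous eigenvalues $(\theta_1,\ldots,\theta_d)$ of $F^{(\bsig,\bp)}$ in a way that makes the exponents $(\gamma-1)b_i$ appear naturally in \eqref{finalCW}. This reduces to a careful use of the homogeneity degrees $p_i'-1$ and the identity $A^\top\bb=\rho(A)\bb$, but the algebra must be executed precisely to match the formulas of \cite{mhpfpaper} verbatim, particularly when identifying the scalar power under which a multi-homogeneous eigenvector of $F^{(\bsig,\bp)}$ lifts to a $(\bsig,\bp)$-eigenpair of $T$.
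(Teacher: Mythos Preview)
Your overall plan---reducing to the multi-homogeneous framework of \cite{mhpfpaper} via Lemma \ref{mhconnect}---matches the paper's, but there is a genuine gap in your argument for part (\ref{PFweakexists}). You claim that $\bsig$-strict nonnegativity ensures $F_i^{(\bsig,\bp)}(\bx)\neq 0$ for every $\bx\in\S_+^{(\bsig,\bp)}$, so that the normalized map $\widetilde F$ is continuous and Brouwer applies. This is false: strict nonnegativity only guarantees $F^{(\bsig,\bp)}(\kone_{++}^{\bsig})\subset\kone_{++}^{\bsig}$ (Lemma \ref{characpos}), not positivity on the boundary. For the diagonal tensor of Example \ref{weaksneg} with $d=m$ and $n\geq m$, taking $\bx_i$ to be the $i$-th canonical basis vector gives $\bx\in\S_+^{(\bsig,\bp)}$ but $F_1^{(\bsig,\bp)}(\bx)=0$, so $\widetilde F$ is undefined there. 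The paper avoids this by invoking Theorem 4.1 of \cite{mhpfpaper} (when $\rho(A)=1$) and Theorem 3.1 of \cite{mhpfpaper} (when $\rho(A)<1$), each of which produces an eigenpair of $F^{(\bsig,\bp)}$ that actually attains the multi-homogeneous spectral radius $r_{\bb}(F^{(\bsig,\bp)})$; a separate preliminary lemma (Lemma \ref{connectmax}) then converts this into a $(\bsig,\bp)$-eigenpair with eigenvalue $r^{(\bsig,\bp)}(T)$. Your sketch also leaves open why a Brouwer fixed point, even if it existed, would correspond to the \emph{maximal} eigenvalue---you cannot appeal to \eqref{finalCW} for this, since the max side of \eqref{finalCW} is itself established by exhibiting an eigenpair that attains it.

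There is also a smaller issue in your treatment of (\ref{uniquepos}): $\bsig$-weak irreducibility does \emph{not} force every nonnegative eigenvector into $\kone_{++}^{\bsig}$---that is the content of $\bsig$-strong irreducibility and part (\ref{irrres}). What weak irreducibility provides, via irreducibility of $DF^{(\bsig,\bp)}(\bx)$ on the interior (Lemma \ref{characgraph}) and Theorems 4.3 and 5.2 of \cite{mhpfpaper}, is the existence of a positive eigenvector and its uniqueness among positive eigenvectors; boundary eigenvectors may still exist, as part (\ref{stricteval}) explicitly allows. Your direct contradiction argument for (\ref{irrres}), by contrast, is correct and is essentially what the paper does through Lemmas \ref{characirr} and \ref{posvectirr}.
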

\begin{proof}
See Section \ref{proofsec}.
\end{proof}
Note that Theorem \ref{PFtheory}, \eqref{PFweakexists} is relatively obvious when $T$ is $\bsig$-symmetric. In fact, as shown in Lemma \ref{invsym}, in this case $r^{(\bsig,\bp)}(T)=\norm{T}_{(\bsig,\bp)}$ and thus the existence of $\bu$ follows from the fact that a continuous function over a compact domain attains its maximum. In particular, this is always the case when $m=d$. However, when $d\neq m$ and $T$ is not $\bsig$-symmetric, proving the existence of $\bu$ is more delicate. The cases $d=1,2$ are proved in  \cite[Theorem 2.3]{Yang1} and \cite[Theorem 4.2]{Qi_rect}, but under the assumption that %  for the cases $d=1,2$ under the assumption 
$p_i\geq m$, for all $i \in[d]$. % and for any nonnegative tensor. 
Our Theorem \ref{PFtheory}, instead, addresses a more general case, but requires $\bsig$-strict nonnegativity of $T$. Although this is an additional requirement, we show for instance in Example \ref{weaksneg} that this is a very mild assumption.% While we require $\bsig$-strict nonnegativity, we note that the latter assumption is still very mild (see e.g. Example \ref{weaksneg})}. 

A particularly interesting consequence of the Collatz-Wielandt formula \eqref{finalCW} is that every positive $(\bsig,\bp)$-eigenvector of $T$ must correspond to the maximal eigenvalue $r^{(\bsig,\bp)}(T)$. Such formula is proved in  \cite[Theorem 2.3]{Yang1} and  \cite[Theorem 1]{us} for the cases $d=1$ and $d=m$, respectively. Both assume that $T$ has a positive $(\bsig,\bp)$-eigenvector and either $p_1\geq m$ if $d=1$, or $(m-1)p_j \leq p_k(p_j-1)$ for some $j\in[d]$ and all $k\in[m]\setminus\{j\}$, if $d=m$. 
In the case $d=2$, a similar formula is proved in \cite[Theorem 4.6]{Yang2} under the assumption that $T$ is $\bsig$-strongly irreducible and $p_1=p_2=m$. It is not difficult to see that that all the above conditions on $p_1,\ldots,p_d$ imply $\rho(A)<1$ except when $p_1=\ldots=p_d=m$, in which case $\rho(A)=1$ (see Figure \ref{figurecond}).
Hence, the assumption in Theorem \ref{PFtheory} is generally less restrictive than any known counterpart.

To our knowledge, \eqref{stricteval} of Theorem \ref{PFtheory} and the characterizations of the spectral radius in \eqref{Gelf} have not been proved before, besides the  particular cases $d=1$ and $p_1=m$. In fact, the only result comparable with point \eqref{stricteval} we are aware of is Theorem 2.4 in \cite{Yang1}, where it is proved that if all the entries of $T$ are strictly positive, $d=1$ and $p_1=m$, then $r^{(\bsig,\bp)}(T)$ is geometrically simple, i.e. for every $(\bsig,\bp)$-eigenvalue $\lambda$ of $T$ with $\lambda \neq r^{(\bsig,\bp)}(T)$ it holds $|\lambda| <r^{(\bsig,\bp)}(T)$. As for the characterization in \eqref{Gelf}, we are only aware of a brief discussion involving the Gelfand formula in \cite[Section 2]{NLA:NLA1902}.

 Finally, \eqref{irrres} of  Theorem \ref{PFtheory}  is a well known result for the cases $d=1$, $d=2$ and $d=m$, see e.g. \cite[Theorem 1.4]{Chang} and \cite[Theorem 14]{us}. Indeed, this result follows from the fact that every nonnegative $(\bsig,\bp)$-eigenvector of $T$ has positive entries and its proof holds regardless of the choice of $p_1,\ldots,p_d\in(1,\infty)$.

Our last main contribution concerns the computational aspects of the positive $(\bsig,\bp)$-eigenvector $\bu$ in Theorem \ref{PFtheory}. This vector can be computed using a nonlinear generalization of the power method. The classical power method allows to compute the leading eigenvector of a primitive matrix $M$ via the iterative sequence $\bx^{k+1}=\frac{M\bx^k}{\norm{M\bx^k}_2}$ for any positive starting point $\bx^0$.
The power method for general $(\bsig,\bp)$ tensor eigenpairs is formulated as follows: Let $\bx^{0}\in\kone_{++}^{\bsig}$ and, for $k=0,1,2,\ldots$, define
\begin{equation}\label{normalPM}
\bx^{k+1}=\left(\frac{F_1^{(\bsig,\bp)}(\bx^k)}{\norm{F_1^{(\bsig,\bp)}(\bx^k)}_{p_1}},\ldots,\frac{F_d^{(\bsig,\bp)}(\bx^k)}{\norm{F_d^{(\bsig,\bp)}(\bx^k)}_{p_d}}\right)\, .
\end{equation}
This sequence provides a natural generalization of the power method for computing eigenpairs of matrices and it reduces to the one proposed in \cite{NQZ}, \cite{Chang_rect_eig}, \cite{Fried} for the cases $d=1$, $d=2$ and $d=m$, respectively.  Usually, convergence towards $\bu$ is only guaranteed when $\rho(A)\leq 1$ and the Jacobian matrix of $F^{(\bsig,\bp)}$ is primitive. However, we prove that when $\rho(A)< 1$ it is sufficient that $T$ is $\bsig$-strictly nonnegative, or equivalently the matrix $M$ of Definition \ref{defirr} has at least one positive entry per row, so that the sequence converges towards $\bu$ with a linear convergence rate. 

If $\rho(A)=1$, primitivity can be relaxed into irreducibility by considering a different sequence, which we define in the following. Let $G^{(\bsig,\bp)}\colon\kone_{+}^{\bsig}\to \kone_{+}^{\bsig}$ be defined as $G^{(\bsig,\bp)}=(G^{(\bsig,\bp)}_1,\ldots,G^{(\bsig,\bp)}_d)$, $G^{(\bsig,\bp)}_i=(G^{(\bsig,\bp)}_{i,1},\ldots,G^{(\bsig,\bp)}_{i,n_i})$ and 
\begin{equation}\label{defG} 
G^{(\bsig,\bp)}_{i,j_i}(\bx)=\sqrt{x_{i,j_i}F^{(\bsig,\bp)}_{i,j_i}(\bx)} \qquad \forall (i,j_i)\in\I^{\bsig},
\end{equation}
and consider the sequence 
\begin{equation}\label{newPM}
\by^{k+1}=\left(\frac{G_1^{(\bsig,\bp)}(\by^k)}{\norm{G_1^{(\bsig,\bp)}(\by^k)}_{p_1}},\ldots,\frac{G_d^{(\bsig,\bp)}(\by^k)}{\norm{G_d^{(\bsig,\bp)}(\by^k)}_{p_d}}\right),
\end{equation}
where $k=0,1,2,\ldots$ and $\bz^{0}\in\kone_{++}^{\bsig}.$

The convergence of the two sequences in \eqref{normalPM}  and \eqref{newPM} is proved in the next Theorem \ref{PMthm}. In order to facilitate its statement, for $k\geq 1$, we let
\begin{equation*}
\begin{array}{ll}\widehat\xi_k=\cwlt(F^{(\bsig,\bp)},\bx^k),&\qquad \widecheck\xi_k=\cwut(F^{(\bsig,\bp)},\bx^k), \\ \widehat\zeta_k=\big(\cwut(G^{(\bsig,\bp)},\by^k)\big)^2,&\qquad \widecheck\zeta_k=\big(\cwut(G^{(\bsig,\bp)},\by^k)\big)^2,\end{array}
\end{equation*}
% \begin{align*}
% &\widehat\xi_k=\cwlt(F^{(\bsig,\bp)},\bx^k), \qquad  \widecheck\xi_k=\cwut(F^{(\bsig,\bp)},\bx^k),\\
% \text{and}\qquad& \widehat\zeta_k=\cwut(G^{(\bsig,\bp)},\by^k)^2,\qquad \widecheck\zeta_k=\cwut(G^{(\bsig,\bp)},\by^k)^2,
% \end{align*}
where $\cwlt,\cwut$ are defined as in Theorem \ref{PFtheory} and recall from \cite{mhpfpaper} the definition of the weighted Hilbert metric: 
$$\mu_{\bb}(\bx,\by)=\sum_{i=1}^db_i\ln\Big(\max_{j_i,l_i\in[n_i]}\frac{x_{i,j_i}y_{i,l_i}}{y_{i,j_i}x_{i,l_i}}\Big)\qquad \forall x,y\in\kone_{++}^{\bsig}.$$
Note that, by the continuity of $\cwlt,\cwut$ in $\kone_{++}^{\bsig}$, if the sequence of $\bx^k$, resp.\ $\by^k$,  %$(\bx^k)_{k=0}^\infty$ or $(\by^k)_{k=0}^\infty$, 
converges to a positive $(\bsig,\bp)$-eigenvector $\bu$ of $T$, then $\displaystyle\lim_{k\to\infty}\widehat\xi_k=\lim_{k\to\infty} \widecheck\xi_k=r^{(\bsig,\bp)}(T)$, resp.\ $\displaystyle\lim_{k\to\infty}\widehat\zeta_k=\lim_{k\to\infty} \widecheck\zeta_k=r^{(\bsig,\bp)}(T)$. %These sequences have further properties given in the following theorem which constitutes our last main result.
\begin{thm}\label{PMthm}
Assume that $T$ is $\bsig$-strictly nonnegative and has a positive $(\bsig,\bp)$-eigenvector $\bu$ and $\rho(A)\leq 1$. Furthermore, let $(\bx^k)_{k=0}^\infty$, $(\by^k)_{k=0}^\infty$, $(\widehat\xi_k)_{k=1}^\infty$ ,$(\widecheck\xi_k)_{k=1}^\infty$, $(\widehat\zeta_k)_{k=1}^\infty$ and $(\widecheck\zeta_k)_{k=1}^\infty$ be as above. Then, the following holds:
\begin{enumerate}[(i)]
\item If $\omega\in\{\xi,\zeta\}$, then for all $k=1,2,\ldots$ it holds\label{genPM}
\begin{equation}\label{monoseq}
\widehat\omega_k\, \leq \, \widehat\omega_{k+1} \, \leq \, r^{(\bsig,\bp)}(T) \, \leq \, \widecheck\omega_{k+1} \, \leq \, \widecheck\omega_{k}, 
\end{equation}
and for every $\epsilon >0$, if $\widehat\omega_{k} -\widecheck\omega_{k} <\epsilon$, then
\begin{equation}\label{stopcrit}
\Big|\frac{\widehat\omega_{k} +\widecheck\omega_{k}}{2}-r^{(\bsig,\bp)}(T)\Big| \leq \frac{\epsilon}{2}.
\end{equation} 
\item If $\rho(A)<1$, then $\lim_{k\to\infty} \bx^k = \bu$ and, with $\bb$ as in Theorem \ref{PFtheory},\label{FPM} 
\begin{equation}\label{convrate}
\mu_{\bb}(\bx^k,\bu) \leq \bigg(\frac{\mu_{\bb}(\bx^1,\bx^0)}{1-\rho(A)}\bigg) \rho(A)^k\qquad \forall k=1,2,\ldots
\end{equation}
\item If $T$ is $\bsig$-weakly irreducible, then $\lim_{k\to\infty}\by^k = \bu$.\label{GPM}
\end{enumerate}
\end{thm}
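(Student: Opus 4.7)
The proof rests on the correspondence (Lemma \ref{mhconnect}) between nonnegative $(\bsig,\bp)$-eigenpairs of $T$ and multi-homogeneous fixed points of $F = F^{(\bsig,\bp)}$, together with the theory of order-preserving multi-homogeneous maps developed in \cite{mhpfpaper}, which guarantees non-expansiveness of $F$ (and of $G = G^{(\bsig,\bp)}$) with respect to the weighted Hilbert metric $\mu_{\bb}$ on $\kone_{++}^{\bsig}$ whenever $\rho(A)\leq 1$. All three parts are then derived by combining this metric structure with the Collatz-Wielandt machinery of Theorem \ref{PFtheory}\eqref{PFweak}.

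For part \eqref{genPM}, I would establish \eqref{monoseq} by an order-theoretic bootstrap. By definition of $\cwlt$, one has the componentwise bound $\lambda_i \bx^k_i \leq F_i(\bx^k)$ with $\lambda_i = \min_{j:\,x^k_{i,j}>0} F_{i,j}(\bx^k)/x^k_{i,j}$. Since $F$ is order-preserving and multi-homogeneous with matrix $A$, applying $F$ to both sides and factoring out the scaling yields $\prod_k \lambda_k^{A_{i,k}}\, F_i(\bx^k) \leq F^{(2)}_i(\bx^k)$ componentwise. Taking logarithms and multiplying by $b_i$ converts the matrix action into $A^\top\bb\leq \bb$, and exponentiating with the exponents $(\gamma-1)b_i$ from \eqref{finalCW} produces exactly $\widehat\omega_k \leq \widehat\omega_{k+1}$; the specific value of $\gamma$ is what compensates for the normalization in \eqref{normalPM}. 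The dual inequality $\widecheck\omega_{k+1}\leq \widecheck\omega_k$ is symmetric, while $\widehat\omega_k \leq r^{(\bsig,\bp)}(T) \leq \widecheck\omega_k$ is immediate from the Collatz-Wielandt characterization of $r^{(\bsig,\bp)}(T)$ in Theorem \ref{PFtheory}\eqref{PFweak}. The stopping criterion \eqref{stopcrit} then follows by averaging the two sides of \eqref{monoseq}. The same argument transfers verbatim to the sequence produced by $G$, as its homogeneity matrix shares with $A$ both the dominant eigenvalue bound and the Perron vector $\bb$.

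For part \eqref{FPM}, the hypothesis $\rho(A)<1$ makes $F$ a strict contraction in $\mu_{\bb}$ with rate $\rho(A)$, by \cite{mhpfpaper}. Because $\mu_{\bb}$ is projective, the coordinate-wise renormalization in \eqref{normalPM} is $\mu_{\bb}$-invariant, so the iteration descends to a bona fide contraction on the complete metric space $\big(\S_{++}^{(\bp,\bsig)}, \mu_{\bb}\big)$, whose unique fixed point is $\bu$ by Theorem \ref{PFtheory}\eqref{uniquepos}. Banach's theorem then gives $\bx^k\to\bu$, and the standard telescoping bound $\mu_{\bb}(\bx^k,\bu)\leq \sum_{j\geq k}\mu_{\bb}(\bx^{j+1},\bx^j)\leq \mu_{\bb}(\bx^1,\bx^0)\,\rho(A)^k/(1-\rho(A))$ delivers \eqref{convrate}.

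For part \eqref{GPM}, the subtlety is that now one only has $\rho(A)\leq 1$, so $F$ is merely non-expansive and its normalized orbits may well be periodic, exactly as happens for irreducible but imprimitive nonnegative matrices. The modified map $G$ in \eqref{defG} is a componentwise geometric mean between the identity and $F$, and a short calculation shows that its homogeneity matrix is $(A+I)/2$; hence $G$ is itself non-expansive in the same metric $\mu_{\bb}$, shares with $F$ the positive fixed point $\bu$ of its normalized version, and has the same Perron data. The essential structural gain of $G$ is that its Jacobian at any point of $\kone_{++}^{\bsig}$ inherits the irreducible support of the Jacobian of $F$ (guaranteed by $\bsig$-weak irreducibility, Definition \ref{defirr}) and, on top of this, picks up strictly positive diagonal entries coming from differentiating the $\sqrt{x_{i,j}}$ factor. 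This turns the linearization primitive, and the general convergence theorem for primitive order-preserving multi-homogeneous maps in \cite{mhpfpaper} then yields $\by^k\to\bu$. Verifying this primitivity — i.e.\ that the square-root averaging genuinely breaks all periodic cycles purely under weak irreducibility of $T$, without any additional primitivity assumption on the Jacobian of $F$ — is where I expect the main technical obstacle to lie.
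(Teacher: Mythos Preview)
Your proposal is correct and follows essentially the same route as the paper: reduce everything to the multi-homogeneous framework of \cite{mhpfpaper}, invoke the Collatz--Wielandt monotonicity there for \eqref{monoseq}, use the Banach contraction in $\mu_{\bb}$ for \eqref{convrate} when $\rho(A)<1$, and for \eqref{GPM} observe that $G^{(\bsig,\bp)}$ has homogeneity matrix $(A+I)/2$ and primitive Jacobian so that the convergence theorem for primitive maps in \cite{mhpfpaper} applies. The ``main technical obstacle'' you flag is in fact routine: since $DG^{(\bsig,\bp)}(\bx)=\tfrac12\,\diag(G^{(\bsig,\bp)}(\bx))^{-1}\big(\diag(F^{(\bsig,\bp)}(\bx))+DF^{(\bsig,\bp)}(\bx)\big)$, weak irreducibility of $T$ makes $DF^{(\bsig,\bp)}(\bx)$ irreducible (Lemma \ref{characgraph}), and an irreducible nonnegative matrix plus a strictly positive diagonal is automatically primitive.
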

\begin{proof}
See Section \ref{proofsec}.
\end{proof}
To our knowledge, the convergence of the power method for nonnegative tensors has been analyzed only for the cases $d=1$, $d=2$ and $d=m$.

If $d=1$, the known assumptions for the convergence of the power method %(and some of its variations) 
towards $\bu$ are either $p_1> m$ and $M$ primitive (\cite[Corollary 5.1]{Fried}), where $M$ is as in Definition \ref{defirr}, or $p_1=m$ and $M$ irreducible (see \cite[Theorem 5.4]{Hu2014}). Clearly, if
$p_1>m$, then the assumptions of Theorem \ref{PMthm}, \eqref{FPM} are considerably weaker as we only assume $T$ to be $\bsig$-strictly nonnegative. When $p_1=m$, Theorem \ref{PMthm}, \eqref{GPM} is equivalent to \cite[Theorem 5.4]{Hu2014} in terms of assumptions. However,  note that the method in \cite{Hu2014} uses an additive shift while we have a multiplicative shift. Furthermore,  the convergence rate of \cite{Fried} for the case $p_1\geq m$ holds only asymptotically and assumes $T$ to be $\bsig$-weakly irreducible. Whereas, a linear convergence rate for the case $p_1=m$ is proved under the assumption that $M$ is primitive in  \cite[Theorem 4.1]{Hu2014}.

For $d=2$, results are known only in the case $p_1=p_2=m$. Precisely, in Theorem 7 of \cite{Chang_rect_eig} it is proved that $(\bx^k)_{k=1}^{\infty}$ converges towards $\bu$ if $p_1=p_2=m$ and $T$ is irreducible in the sense of Definition 1 in \cite{Chang_rect_eig} which, as discussed above, is more restrictive than $T$ being $\bsig$-strongly irreducible. As $\bsig$-strong irreducibility implies $\bsig$-weak irreducibility, it is clear that Theorem \ref{PMthm}, \eqref{GPM} improves these results. A linear convergence rate is proved in \cite[Theorem 4]{linlks} for the case where $p_1=p_2=m$ but requires additional assumptions on $T$.  %even for $p_1=p_2=m$. 
 % satisfies a more restrictive assumption than that of Theorem 7 \cite{Chang_rect_eig}.

Finally, if $d=m$, then it is proved in  \cite[Theorem 2]{us} that a variation of the power method converges to $\bu$ under the condition that $T$ is $\bsig$-weakly irreducible and $(m-1)p_j \leq p_k(p_j-1)$ for some $j\in[d]$ and all $k\in[m]\setminus\{j\}$, which, as discussed above, implies $\rho(A)<1$ unless $p_1=\ldots=p_d=m$, in which case $\rho(A)=1$. Hence, in terms of convergence assumptions, Theorem \ref{PMthm} improves \cite[Theorem 2]{us}. However, when $p_1=\ldots=p_d=m$, the latter result provides an asymptotic convergence rate which is not implied by Theorem \ref{PMthm}.

%%%%%%% SECTION 4

\section{Tensor norms and spectral problems}\label{tnormsec}
In this section we study a number of relations between the critical points of the Rayleigh quotient $\Phi$ in \eqref{genprojnorm} and the $(\bsig,\bp)$-eigenpairs of $T$. The goal of this discussion is twofold. First, it gives an optimization perspective on $(\bsig,\bp)$-eigenpairs and second it explains how to use our main results, in particular Theorem \ref{PMthm}, for the computation of $\norm{T}_{(\bsig,\bp)}$. Recall that, here and throughout the manuscript, we denote by $\grad_{i} f_T(\bx^{[\bsig]})\in\R^{n_i}$ the gradient of the map $\bx_i\mapsto f_T(\bx^{[\bsig]})$.

In a first step, we prove in Lemma \ref{sym} how to construct a $\bsig$-symmetric tensor $S\in\R^{N_1\times \ldots \times N_m}$ so that $f_T(\bx^{[\bsig]})=f_S(\bx^{[\bsig]})$ and $\grad_i f_T(\bx^{[\bsig]})=\nu_i\mathcal{S}_{s_i}(\bx^{[\bsig]})$ for every $\bx\in\R^{n_1}\times \ldots \times \R^{n_d}$, where $\mathcal{S}(\bz)=\grad f_S(\bz)$ for every $\bz\in\R^{N_1}\times \ldots \times \R^{N_m}$. This construction has practical relevance, as it allows for a simple implementation of $\grad_i f_T(\bx^{[\bsig]})$ and it shows that partial symmetry is relevant when computing the critical points of $\Phi$. Furthermore, as $f_T=f_S$, we note that $S$ can be used in place of $T$ in the definition of $\Phi$, without changing the optimization problem. In particular, we have $\norm{T}_{(\bsig,\bp)}=\norm{S}_{(\bsig,\bp)}$.

In a second step, we prove in Lemma \ref{invsym} that the $(\bsig,\bp)$-eigenvector and $(\bsig,\bp)$-eigenvalues of the $\bsig$-symmetric tensor $S$ are precisely the critical points, resp. values, of $\Phi$. In particular, this means that $\norm{S}_{(\bsig,\bp)}=r^{(\bsig,\bp)}(S)$ and thus, if $S$ satisfies the assumptions of Theorem \ref{PMthm},  the power method converges to a global maximizer $\bu$ of $\Phi$ and $f_T(\bu^{[\bsig]})=\norm{T}_{(\bsig,\bp)}$.

Finally, in Lemma \ref{banachlemma} we discuss cases where  $\norm{T}_{(\bsig,\bp)}=\norm{T}_{(\tilde\bsig,\tilde\bp)}$ for different \dimensionalpartition{}s $\bsig$, $\tilde \bsig$.

\begin{lem}\label{sym}
Let $\bsig=\{\sigma_i\}_{i=1}^d$ be a \dimensionalpartition{} of $T\in\R^{N_1\times \ldots \times N_m}$. For $i\in[d]$, let $\mathfrak{S}_i$ be the permutation group of $\sigma_i$, and define $S\in\R^{N_1\times \ldots \times N_m}$ as
\begin{equation}\label{defsym}
S_{j_1,\ldots,j_m}=\sum_{i=1}^d\frac{1}{\nu_i !}\sum_{\pi_i\in\mathfrak{S}_i}T_{j_{\pi_1(s_1)},\ldots,j_{\pi_1(s_2-1)},\ldots,j_{\pi_d(s_{d})},\ldots,j_{\pi_d(s_{d+1}-1)}}
\end{equation}
for all $j_k\in[N_k]$, $k\in[m]$. Then, we have $f_T(\bx^{[\bsig]}) =f_S(\bx^{[\bsig]})$ for all $\bx$. Furthermore, $S$ is $\bsig$-symmetric and, with $\mathcal S=\grad f_S$, it holds
$\grad_i f_T(\bx^{[\bsig]})=\nu_i\mathcal{S}_{s_i}(\bx^{[\bsig]})$ for all $\bx$ and $i \in[d]$. %\quad \text{and}\quad \tfrac{1}{\nu_i}\langle{\bx_i},{\grad_i f_S(\bx^{[\bnu]})}\rangle=f_S(\bx^{[\bnu]}).
%\end{equation}
\end{lem}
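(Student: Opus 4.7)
The plan is to establish the three claims in sequence, with the equality $f_T(\bx^{[\bsig]}) = f_S(\bx^{[\bsig]})$ doing most of the work; the $\bsig$-symmetry of $S$ is essentially visible from its definition, and the gradient identity then follows by a direct differentiation of the multilinear form of $S$, using its symmetry to collapse repeated terms.

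For the first claim, I would expand
\[
f_T(\bx^{[\bsig]}) \;=\; \sum_{j_1,\ldots,j_m}T_{j_1,\ldots,j_m}\prod_{i=1}^{d}\prod_{k\in\sigma_i}x_{i,j_k}.
\]
The key observation is that, for each $i\in[d]$, the product $\prod_{k\in\sigma_i}x_{i,j_k}$ is invariant under any permutation $\pi_i\in\mathfrak{S}_i$ of the indices in $\sigma_i$, since all of its factors come from the same vector $\bx_i$. Relabelling the summation indices through $\pi_i$ therefore leaves the whole sum unchanged, so averaging over all choices of $(\pi_1,\ldots,\pi_d)$ with the normalization $\prod_i 1/\nu_i!$ replaces the coefficient $T_{j_1,\ldots,j_m}$ by $S_{j_1,\ldots,j_m}$, yielding $f_T(\bx^{[\bsig]})=f_S(\bx^{[\bsig]})$. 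The $\bsig$-symmetry of $S$ is then immediate from its definition: applying a further permutation from $\mathfrak{S}_i$ to the indices in $\sigma_i$ only relabels the averaging, and hence leaves each entry $S_{j_1,\ldots,j_m}$ unchanged, for every $i\in[d]$.

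For the gradient identity, I would differentiate $f_S(\bx^{[\bsig]})$ directly with respect to the variable $x_{i,t}$. Each of the $\nu_i$ occurrences of $\bx_i$ as an argument of $f_S$ (one for every $k\in\sigma_i$) produces one term in the resulting sum, and by the $\bsig$-symmetry of $S$ just established, all of these $\nu_i$ contributions equal $\mathcal{S}_{s_i,t}(\bx^{[\bsig]})$, where $\mathcal{S}_{s_i}$ denotes the gradient of $f_S$ with respect to its $s_i$-th argument. Combining this with $f_T(\bx^{[\bsig]}) = f_S(\bx^{[\bsig]})$ established above yields $\grad_i f_T(\bx^{[\bsig]}) = \nu_i\,\mathcal{S}_{s_i}(\bx^{[\bsig]})$. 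The main obstacle is purely notational: carefully tracking how the action of $\mathfrak{S}_i$ on $\sigma_i\subset[m]$ permutes the corresponding entries of the multi-index $(j_1,\ldots,j_m)$, and identifying which of the $m$ arguments of $f_S$ correspond to each copy of $\bx_i$ in $\bx^{[\bsig]}$. Once this bookkeeping is set up, each of the three assertions reduces to inspection.
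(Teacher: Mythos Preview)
Your proposal is correct and follows essentially the same approach as the paper: both establish $f_T(\bx^{[\bsig]})=f_S(\bx^{[\bsig]})$ by observing that the monomial $\prod_{k\in\sigma_i}x_{i,j_k}$ is invariant under any $\pi_i\in\mathfrak{S}_i$ and then averaging, and both deduce the gradient identity from the partial symmetry of $S$. The only minor difference is that the paper invokes a reference (Equation (4) in \cite{Lim}) for the fact that $\grad_i f_S(\bx^{[\bsig]})=\nu_i\,\mathcal{S}_{s_i}(\bx^{[\bsig]})$ when $S$ is symmetric in $\sigma_i$, whereas you supply the direct chain-rule argument yourself.
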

\begin{proof}
For $\bx\in\R^{n_1}\times\ldots\times\R^{n_d}$, let $Z\in\R^{N_1\times \ldots \times N_m}$ be the tensor defined as $Z_{j_1,\ldots,j_m}=T_{j_1,\ldots,j_m}\prod_{i\in[d]}\prod_{t\in\sigma_i}x_{i,j_t}$ for all $j_1,\ldots,j_m$. We have
\begin{align*}
f_S(\bx^{[\bsig]}) &=
\sum_{i\in[d]}\sum_{t\in\sigma_i}\sum_{j_{t}\in [n_i]}Z_{j_1,\ldots,j_m}\\ 
&= \sum_{i\in[d]}\sum_{t\in\sigma_i}\sum_{j_{t}\in [n_i]}\sum_{a\in[d]}\frac{1}{|\mathfrak{S}_a|}\sum_{\pi_a\in \mathfrak{S}_a}Z_{j_1,\ldots, j_{s_a-1},j_{\pi_a(s_a)},\ldots,,j_{\pi_a(s_{a+1}-1)},j_{s_{a+1}},\ldots,j_m}\\
&= \sum_{i\in[d]}\sum_{t\in\sigma_i}\sum_{j_{t}\in [n_i]}S_{j_1,\ldots,j_m}x_{1,j_1}\cdots x_{d,j_m}=f_S(\bx^{[\bsig]}).
\end{align*}
To conclude, note that, as $S$ is partially symmetric with respect to $\sigma_i$, Equation (4) in \cite{Lim} implies $ \nu_i\mathcal{S}_i(\bx^{[\bsig]})=\grad_if_S(\bx^{[\bsig]})=\grad_if_T(\bx^{[\bsig]})$.
\end{proof}
Now, we show that the converse of Lemma \ref{sym} is also true.
\begin{lem}\label{invsym} Let $\bsig=\{\sigma_i\}_{i=1}^d$ be a \dimensionalpartition{} of $T\in\R^{N_1\times\ldots\times N_m}$ and $\bp\in(1,\infty)^d$. If $T$ is $\bsig$-symmetric, then the $(\bsig,\bp)$-eigenvectors of $T$ are critical points of the Rayleigh quotient $\Phi$ defined in \eqref{genprojnorm}. Furthermore, it holds $\norm{T}_{(\bsig,\bp)}=r^{(\bsig,\bp)}(T)$.
\end{lem}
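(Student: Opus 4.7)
The plan is to combine two ingredients: the partial-symmetry identity $\grad_i f_T(\bx^{[\bsig]}) = \nu_i\, \T_{s_i}(\bx^{[\bsig]})$ furnished by Lemma \ref{sym}, and Euler's identity applied to the $\nu_i$-homogeneous map $\bx_i \mapsto f_T(\bx^{[\bsig]})$.

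For the first assertion, I would let $(\lambda,\bx)$ be a $(\bsig,\bp)$-eigenpair of $T$, so that $\norm{\bx_i}_{p_i}=1$ and $\T_{s_i}(\bx^{[\bsig]})=\lambda\,\psi_{p_i}(\bx_i)$ for every $i\in[d]$. Multiplying by $\nu_i$ and invoking $\bsig$-symmetry via Lemma \ref{sym} yields $\grad_i f_T(\bx^{[\bsig]}) = \nu_i\lambda\,\psi_{p_i}(\bx_i)$. Pairing both sides with $\bx_i$ and using $\langle \bx_i,\psi_{p_i}(\bx_i)\rangle = \norm{\bx_i}_{p_i}^{p_i}=1$ together with Euler's identity $\langle \bx_i,\grad_i f_T(\bx^{[\bsig]})\rangle = \nu_i\, f_T(\bx^{[\bsig]})$ gives the identification $\lambda = f_T(\bx^{[\bsig]}) = \Phi(\bx)$. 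A direct quotient-rule computation of $\grad_i\Phi$, using $\grad\norm{\bx_i}_{p_i}^{\nu_i} = \nu_i\norm{\bx_i}_{p_i}^{\nu_i-p_i}\,\psi_{p_i}(\bx_i)$, then yields $\grad_i\Phi(\bx)=0$ at the normalized point, so $\bx$ is a critical point of $\Phi$.

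For the norm equality $\norm{T}_{(\bsig,\bp)}=r^{(\bsig,\bp)}(T)$, the inequality $r^{(\bsig,\bp)}(T) \leq \norm{T}_{(\bsig,\bp)}$ is an immediate by-product of the first part: every $(\bsig,\bp)$-eigenvalue $\lambda$ equals $\Phi(\bx)$ at its normalized eigenvector $\bx$, so $|\lambda|\leq \norm{T}_{(\bsig,\bp)}$. For the reverse inequality, $\S_+^{(\bp,\bsig)}$ is compact and $|\Phi|$ is continuous there, so the supremum $\norm{T}_{(\bsig,\bp)}$ is attained at some $\by\in\S_+^{(\bp,\bsig)}$. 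At such a maximizer, the Lagrange multiplier conditions produce scalars $\beta_i$ with $\grad_i f_T(\by^{[\bsig]}) = \beta_i\,\psi_{p_i}(\by_i)$; applying $\bsig$-symmetry together with the same Euler/pairing argument forces $\beta_i = \nu_i\,\Phi(\by)$, whence $\T_{s_i}(\by^{[\bsig]}) = \Phi(\by)\,\psi_{p_i}(\by_i)$. Thus $\by$ is a $(\bsig,\bp)$-eigenvector with eigenvalue $\Phi(\by) = \pm\norm{T}_{(\bsig,\bp)}$, yielding $\norm{T}_{(\bsig,\bp)}\leq r^{(\bsig,\bp)}(T)$.

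The only real obstacle will be careful bookkeeping: tracking the factor $\nu_i$ introduced by Lemma \ref{sym}, the $p_i$-weighted derivative of $\norm{\bx_i}_{p_i}^{\nu_i}$, and the normalizations $\norm{\bx_i}_{p_i}=1$. Beyond this, $\bsig$-symmetry collapses the discrepancy between $\grad_i f_T(\bx^{[\bsig]})$ and $\nu_i\,\T_{s_i}(\bx^{[\bsig]})$, reducing the statement to the classical Lagrange-multiplier characterization of norm maximizers on a product of spheres.
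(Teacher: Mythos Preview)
Your argument is correct and follows essentially the same path as the paper's proof: invoke Lemma \ref{sym} with $S=T$ (by $\bsig$-symmetry) to obtain $\grad_i f_T(\bx^{[\bsig]})=\nu_i\T_{s_i}(\bx^{[\bsig]})$, deduce that $(\bsig,\bp)$-eigenvectors satisfy the critical-point system \eqref{normspeceq} with critical value $\lambda=f_T(\bx^{[\bsig]})$, and then infer $\norm{T}_{(\bsig,\bp)}=r^{(\bsig,\bp)}(T)$; the paper compresses the last step by relying on the already-stated converse (critical points of $\Phi$ solve \eqref{normspeceq}, hence here \eqref{genspeceq}), whereas you spell it out via compactness and Lagrange multipliers. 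One small slip: the lemma does not assume $T$ is nonnegative, so the maximizer of $|\Phi|$ should be taken on the full product of $p_i$-spheres rather than on $\S_+^{(\bp,\bsig)}$; this does not affect the rest of your argument.
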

\begin{proof}
As $T$ is symmetric with respect to $\sigma_i$ for $i\in [d]$, we have $S=T$ where $S$ is as in \eqref{defsym}. Thus Lemma \ref{sym} implies that $\grad_i f_T(\bx^{[\bsig]})=\nu_i\T_{s_i}(\bx^{[\bsig]})$ for every $i$. Hence, if $\bx\in\kone_{+}^{\bsig}$ satisfies 
$\T_{s_i}(\bx^{[\bsig]})=\lambda \psi_{p_i}(\bx_i)$ for all $i$, then we have $\grad_i f_T(\bx^{[\bsig]})=\nu_i\lambda \psi_{p_i}(\bx_i)$ for every $i$, i.e. $\bx$ is a critical point of $\Phi$. Finally, note that $\lambda$ is the critical value associated to $\bx$ since $f_T(\bx^{[\bsig]})=\langle\T_{s_i}(\bx^{[\bsig]}),\bx_i\rangle=\lambda \norm{\bx_i}_{p_i}^{p_i}=\lambda,$
 as $f_T(\bz)$ is linear in $\bz_{s_i}$. Therefore, we have $\norm{T}_{(\bsig,\bp)}=r^{(\bsig,\bp)}(T)$.
\end{proof}
%The above lemma has two important consequences. First, it implies that when $T$ is partially symmetric with respect to $\bsig=\{\sigma_{i}\}_{i=1}^d$, then it is straightforward to prove that \eqref{genspeceq} has a solution as the continuous function $\bx\mapsto f_T(\bx^{[\bsig]})$ attains its maximum on the compact set given by the product of spheres $\{\bx_i\in\R^{n_i}\mid \norm{\bx_i}_{p_i}=1\}, i=1,\ldots,d$. Second, it implies that $(\bsig,\bp)$-eigenvectors associated to the $(\bsig,\bp)$-spectral radius of $T$ are global maximizers of \eqref{genprojnorm}. 

Finally, we show below that if $\bsig=\{\sigma_i\}_{i=1}^d$, $\tilde\bsig=\{\tilde\sigma_i\}_{i=1}^{\tilde d}$ are \dimensionalpartition{}s of $T$, $\bsig \sqsubseteq \tilde\bsig$ and $T$ is partially symmetric with respect to $\tilde\bsig$, then the corresponding tensor norms coincide for suitable choices of the $p_i,\tilde p_i$. This result is essentially a corollary of Theorem 1 in \cite{BanachA}.
\begin{lem}\label{banachlemma}
Let $\bsig=\{\sigma_i\}_{i=1}^d$, $\tilde\bsig=\{\tilde \sigma_i\}_{i=1}^{\tilde d}$ be two \dimensionalpartition{}s of $T\in\R^{N_1\times \ldots \times N_m}$. If $\bsig \sqsubseteq \tilde\bsig$ and $\bp\in(1,\infty)^d$, $\tilde\bp\in(1,\infty)^{\tilde d}$ are such that $p_i=\tilde p_j$ whenever $\sigma_i\subset\tilde\sigma_j.$ Then we have
$\norm{T}_{(\bsig,\bp)}=\norm{T}_{(\tilde\bsig,\tilde \bp)}$.
\end{lem}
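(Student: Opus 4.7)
I plan to prove the two inequalities $\norm{T}_{(\tilde\bsig,\tilde\bp)}\le\norm{T}_{(\bsig,\bp)}$ and $\norm{T}_{(\bsig,\bp)}\le\norm{T}_{(\tilde\bsig,\tilde\bp)}$ separately, using the map $\pi\colon[d]\to[\tilde d]$ witnessing $\bsig\sqsubseteq\tilde\bsig$ (so $\sigma_i\subset\tilde\sigma_{\pi(i)}$). Following the surrounding discussion and the reference to Theorem~1 in \cite{BanachA}, the argument for the reverse inequality will rely on the hypothesis stated in the introductory paragraph of the lemma that $T$ is $\tilde\bsig$-symmetric.

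The forward inequality follows by a direct substitution. Given $\tilde\bx\in\S_+^{(\tilde\bp,\tilde\bsig)}$, set $\bx_i:=\tilde\bx_{\pi(i)}$: the compatibility $p_i=\tilde p_{\pi(i)}$ gives $\norm{\bx_i}_{p_i}=1$ for every $i$, so $\bx\in\S_+^{(\bp,\bsig)}$, and because $\{\sigma_i\colon\pi(i)=j\}$ partitions $\tilde\sigma_j$ the tuples $\bx^{[\bsig]}$ and $\tilde\bx^{[\tilde\bsig]}$ agree mode-by-mode. Hence $|f_T(\bx^{[\bsig]})|=|f_T(\tilde\bx^{[\tilde\bsig]})|$ and taking the supremum over $\tilde\bx$ yields $\norm{T}_{(\tilde\bsig,\tilde\bp)}\le\norm{T}_{(\bsig,\bp)}$.

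For the reverse inequality I plan to ``collapse'' the refinement $\bsig$ into $\tilde\bsig$ iteratively, one class at a time, via Banach's theorem. Fix $\bx\in\S_+^{(\bp,\bsig)}$ and $j\in[\tilde d]$, and write $\pi^{-1}(j)=\{i_1,\ldots,i_r\}$, so that $\tilde\sigma_j=\sigma_{i_1}\sqcup\cdots\sqcup\sigma_{i_r}$ with $p_{i_k}=\tilde p_j$ and $n_{i_k}=\tilde n_j$ for every $k$. Freezing the $\bx_i$ with $\pi(i)\neq j$ and regarding the modes of $\tilde\sigma_j$ as free produces a $\tilde\nu_j$-linear form $L\colon(\R^{\tilde n_j})^{\tilde\nu_j}\to\R$; the $\tilde\bsig$-symmetry of $T$ ensures that $L$ is a \emph{symmetric} multilinear form. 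Theorem~1 in \cite{BanachA} then gives
\begin{equation*}
\sup_{\norm{\bz_k}_{\tilde p_j}=1,\,k=1,\ldots,\tilde\nu_j}|L(\bz_1,\ldots,\bz_{\tilde\nu_j})|=\sup_{\norm{\bz}_{\tilde p_j}=1}|L(\bz,\ldots,\bz)|.
\end{equation*}
The left-hand side is an upper bound for the value obtained when $\bx_{i_1}$ is plugged into the first $\nu_{i_1}$ slots, $\bx_{i_2}$ into the next $\nu_{i_2}$, and so on, which is exactly $|f_T(\bx^{[\bsig]})|$. Hence there exists a unit vector $\tilde\bx_j\in\R^{\tilde n_j}$ such that replacing $\bx_{i_1},\ldots,\bx_{i_r}$ by the common vector $\tilde\bx_j$ placed in every mode of $\tilde\sigma_j$ does not decrease $|f_T(\bx^{[\bsig]})|$. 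Iterating this replacement for $j=1,\ldots,\tilde d$ yields an element $\tilde\bx\in\S_+^{(\tilde\bp,\tilde\bsig)}$ with $|f_T(\bx^{[\bsig]})|\le|f_T(\tilde\bx^{[\tilde\bsig]})|\le\norm{T}_{(\tilde\bsig,\tilde\bp)}$, and taking the supremum over $\bx$ concludes.

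The main obstacle is the clean invocation of Banach's theorem: one must verify that the partial symmetry of $T$ with respect to $\tilde\sigma_j$ precisely encodes the symmetry of the frozen form $L$ in its $\tilde\nu_j$ arguments, and that Theorem~1 in \cite{BanachA} as cited here applies to symmetric multilinear forms on the finite-dimensional $\ell^{\tilde p_j}$ spaces involved.
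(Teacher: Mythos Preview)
Your argument is correct and follows essentially the same route as the paper: the easy inequality $\norm{T}_{(\tilde\bsig,\tilde\bp)}\le\norm{T}_{(\bsig,\bp)}$ by substitution, and the reverse by freezing the modes outside each $\tilde\sigma_j$ and invoking Banach's theorem (Theorem~1 in \cite{BanachA}) on the resulting symmetric $\tilde\nu_j$-linear form; the only cosmetic difference is that you collapse all of $\pi^{-1}(j)$ to a common vector in a single application of Banach's theorem, whereas the paper merges two $\sigma_i$'s at a time inside a fixed $\tilde\sigma_k$. Your decision to take the $\tilde\bsig$-symmetry of $T$ as a standing hypothesis (as announced in the paragraph preceding the lemma) is in fact the cleaner choice: the paper instead appeals to Lemma~\ref{sym} to replace $T$ by its $\tilde\bsig$-symmetrization $S$, but that substitution preserves $\norm{\cdot}_{(\tilde\bsig,\tilde\bp)}$ while in general it does \emph{not} preserve $\norm{\cdot}_{(\bsig,\bp)}$ (e.g.\ for a non-symmetric $2\times 2$ matrix with $\bsig=\{\{1\},\{2\}\}$ and $\tilde\bsig=\{\{1,2\}\}$), so the symmetry hypothesis is genuinely needed rather than removable by symmetrization.
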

\begin{proof}
If $\bsig=\tilde \bsig$, there is nothing to prove, so let us assume $\bsig\neq\tilde \bsig$. Clearly, we have $\norm{T}_{(\bsig,\bp)}\geq \norm{T}_{(\tilde \bsig,\tilde\bp)}$. We prove the reverse inequality. First, note that by Lemma \ref{sym}, by substituting $T$ with $S$ if necessary, we may assume without loss of generality that $T$ is $\tilde\bsig$-symmetric. Now, let $(\bx_1^*,\ldots,\bx_d^*)$ be such that $\norm{\bx_i^*}_{p_i}=1$ for all $i\in[d]$ and $\norm{T}_{(\bsig,\bp)}=f_T((\bx^*)^{[\bsig]})$. As $\bsig \sqsubseteq \tilde\bsig$ and $\bsig\neq\tilde \bsig$, there exists $i,j\in [d],k\in [\tilde d]$ such that $i<j$, $\sigma_i\subset \tilde \sigma_k$ and $\sigma_j\subset \tilde \sigma_k$. Then $p_i=p_j$ by assumption and we have 
\begin{equation*}
\norm{T}_{(\bsig,\bp)}=\max_{\bx_i,\bx_j\neq 0} \frac{f_T\big((\bx_1^*,\ldots,\bx_{i-1}^*,\bx_i,\bx^*_{i+1},\ldots,\bx_{j-1}^*,\bx_j,\bx^*_{j+1},\ldots,\bx^*_d)^{[\bsig]}\Big)}{\norm{\bx_i}_{p_i}^{\nu_i}\,\norm{\bx_j}_{p_j}^{\nu_j}}
\end{equation*}
where $\nu_i=|\sigma_i|$ and $\nu_j=|\sigma_j|$. Now, as $T$ is partially symmetric with respect to $\tilde \sigma_k$, Theorem 1 in \cite{BanachA} implies that we there exists $(\by_1^*,\ldots,\by_d^*)$ with $\norm{\by_l^*}_{p_l}=1, l\in[d]$ such that $\norm{T}_{(\bsig,\bp)}=f_T((\by^*)^{[\bsig]})$ and $\by_i^* = \by_j^*$. Continuing this argument for every $i,j\in [d],k\in [\tilde d]$ as above, we deduce that there exists $(\bz_1^*,\ldots,\bz_d^*)$ with $\norm{\bz_l^*}_{p_l}=1, l\in[d]$, $\norm{T}_{(\bsig,\bp)}=f_T((\bz^*)^{[\bsig]})$ and the following property: For every $i,j\in[d]$ such that there exists $k\in[\tilde d]$ with $\sigma_i\subset \tilde \sigma_k$ and $\sigma_j\subset \tilde \sigma_k$, it holds $\bz_i^* = \bz_j^*$. It follows that there exists $\bar\bsig$ and $\bar\bz\in\kone_{+,0}^{\tilde\bsig}$ such that $\bar\bz^{[\bar\bsig]}=\bz^*$. Hence, we have $$\norm{T}_{(\bsig,\bp)}=f_T((\bz^*)^{[\bsig]})=f_T((\bar\bz^{[\bar \bsig]})^{[\bsig]})=f_T(\bar\bz^{[\tilde \bsig]})\leq \norm{T}_{(\tilde \bsig,\tilde\bp)},$$
which concludes the proof.
\end{proof}
%We conclude by noting that Theorem \ref{optithm} follows directly from the above lemmas.
%\begin{proof}[Proof of Theorem \ref{optithm}]
%The first part follows from Lemma \ref{banachlemma} and the second part follows from Lemma \ref{sym}.
%\end{proof}
%
%
%
%
%
%
%
%
%
%
\section{The multi-homogeneous setting}\label{mhsection}
One of the keys of our Perron--Frobenius theorem is the use of the multi--homogeneous map $F^{(\bsig,\bp)}$, defined in \eqref{defF}. In the matrix case $M\in\R^{n\times n}$ we know that the eigenvectors of $M$ are fixed points of the homogeneous map $\bx\mapsto M\bx$  in the projective space of $\R^n$, that is, if $M\bx=\lambda \bx$ for some $\bx$ with $\norm{\bx}=1$, then $g(\bx)=\bx$ where $g(\bz)=M\bz/\norm{M\bz}$. We extend this observation to the tensor setting by means of $F^{(\bp,\bsig)}$. Precisely, we prove in Lemma \ref{mhconnect} that the $(\bsig,\bp)$-eigenvectors of $T$ are exactly the fixed points of $F^{(\bsig,\bp)}$ in the product of projective spaces corresponding to
$\R^{n_1}\times \ldots \times \R^{n_d}$. This observation is useful as, for nonnegative tensors $T$, the mapping $F^{(\bsig,\bp)}$ is  order-preserving and multi-homogeneous and thus we can apply the nonlinear Perron-Frobenius theorem discussed in \cite{mhpfpaper} to derive conditions on the dominant $(\bsig,\bp)$-eigenpair of nonnegative tensors.  In particular, the spectral radius of $F^{(\bsig,\bp)}$ is strictly related to the $(\bsig,\bp)$-spectral radius of $T$ and the irreducibility conditions of $F^{(\bsig,\bp)}$ transfer to $T$. 

Let us first review two important properties of $F^{(\bsig,\bp)}$, together with some useful related notation borrowed from \cite{mhpfpaper}. The proof of these properties follows by a straightforward computation and is omitted for brevity. 

Assume that $T$ is a nonnegative tensor, $\bsig$ a shape partition of $T$ and $\bp =(p_1,\dots,p_d)\geq 1$. Then:

1. $F^{(\bsig,\bp)}$ is order--preserving, that is 
$$
\bx\lek \by \qquad \implies \qquad F^{(\bsig,\bp)}(\bx)\lek F^{(\bsig,\bp)}(\by) \qquad \forall\, \bx,\by\in\kone^{\bsig}_+\, ,
$$
where, for $\bx,\by\in\kone^{\bsig}_+$, we write $\bx\lek\by$ if $\by-\bx\in\kone^{\bsig}_{+}$. This  partial ordering notation is particularly useful and throughout we also write  $\bx\lekk\by$ and $\bx\lekkk\by$ if $\by-\bx\in\kone^{\bsig}_{+}\setminus\{0\}$ and $\by-\bx\in\kone^{\bsig}_{++}$, respectively. 

2. $F^{(\bsig,\bp)}$ is multi-homogeneous with homogeneity matrix $A=A(\bsig,\bp)$, where $A(\bsig,\bp)$ is defined in \eqref{defA_first}. 
%\begin{equation}\label{defA}
%A= \diag(p_1'-1,\ldots,p_d'-1)(\ones\bnu^\top-I)\, ,
%\end{equation}
%where $\bnu=(|\sigma_1|,\ldots,|\sigma_d|)^\top$ and $p_i' = p_i/(p_i-1)$ for $i=1,\dots,d$. 
In particular, for any $\bx \in \kone^{\bsig}_{+}$ and any $\bt \in \R^d_+$ it holds 
$$
F^{(\bsig,\bp)}(\bt \otimes \bx) = \bt^A \otimes F^{(\bsig,\bp)}(\bx)\, ,
$$
where $\bt \otimes \bx = (\theta_1\bx_1, \dots, \theta_d \bx_d)$ and $\bt^A$ is the vector with entries $(\bt^A)_i = \prod_{j=1}^d \theta_j^{A_{ij}}$. 
%Note that the matrix $A$ is nonnegative and irreducible. Therefore there exists a unique entrywise positive eigenvector $\bb$ of $A^\top$ such that $A^\top \bb = \rho(A)\bb$ and $\sum_{i=1}^d b_i=1$. In  the following we shall always use the symbol $\bb$ to denote such a vector.

A vector $\bx$ is an eigenvector of $F^{(\bsig,\bp)}$ with (vector--valued) eigenvalue $\bt \in \R^d$ if $F^{(\bsig,\bp)}(\bx) = \bt \otimes \bx$. The following lemma establishes the correspondence between the nonnegative eigenvectors and eigenvalues of $F^{(\bsig,\bp)}$  and the nonnegative $(\bsig,\bp)$-eigenvectors and $(\bsig,\bp)$-eigenvalues of $T$. %Furthermore, it explains how the corresponding $(\bsig,\bp)$-eigenvalues relate to the .
%The following lemma establishes the correspondence between the fixed points of $F^{(\bsig,\bp)}$  in the product of projective spaces and the $(\bsig,\bp)$-eigenvectors of $T$. Further, it also explains how to construct the corresponding eigenvalues.
 \begin{lem}\label{mhconnect}
 Let $\bx\in\kone_{+,0}^{\bsig}$, then the following two statements are equivalent: 
 \begin{enumerate}[$\qquad(a)$]
\item $\big(\frac{\bx_1}{\norm{\bx_1}_{p_1}},\ldots,\frac{\bx_d}{\norm{\bx_d}_{p_d}}\big)$ is a $(\bsig,\bp)$-eigenvector of $T$.\label{equivsig1}
\item There exists $\bt\in\R^d_+$ such that $F^{(\bsig,\bp)}(\bx) = \bt \otimes \bx$. \label{equivsig2} 
\end{enumerate}
Furthermore, suppose that $\norm{\bx_i}_{p_i}=1$, then we have the following:
\begin{enumerate}[$\qquad(a)$]
 \setcounter{enumi}{2}
\item If $F^{(\bsig,\bp)}(\bx) = \bt \otimes \bx$, there exists $\lambda\in\R_+$ such that $\theta_{i}=\lambda^{p_i'-1}$ for all $i\in[d]$ and $(\lambda,\bx)$ is a $(\bsig,\bp)$-eigenpair of $T$.\label{evalmhtotens}
\item If $\lambda \geq 0$ is such that $(\lambda,\bx)$ is a $(\bsig,\bp)$-eigenpair of $T$, then $F^{(\bsig,\bp)}(\bx) = \tilde{\boldsymbol \lambda} \otimes \bx$ with $\tilde{\lambda}_i = \lambda^{p_i'-1}$, for all $i\in [d]$. 
\label{evaltenstomh}
\end{enumerate}
 \end{lem}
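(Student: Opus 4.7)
The plan is to prove parts (d) and (c) first (the normalized case) by direct manipulation, and then extend to the general equivalence (a)$\Leftrightarrow$(b) using the multi-homogeneity of $F^{(\bsig,\bp)}$. All the manipulations hinge on the elementary identity $(p_i-1)(p_i'-1)=1$, the fact that $\psi_{p_i}(\bx_i)=\bx_i^{p_i-1}$ componentwise whenever $\bx_i\in\R^{n_i}_+$, and the nonnegativity of $T$ (which ensures the $(p_i'-1)$-power is applied to nonnegative quantities and forces $\lambda\geq 0$). Concretely, (d) follows by rewriting the coordinate-wise eigen-equation $\T_{s_i,j_i}(\bx^{[\bsig]})=\lambda\,x_{i,j_i}^{p_i-1}$ as $(\T_{s_i,j_i}(\bx^{[\bsig]}))^{p_i'-1}=\lambda^{p_i'-1}x_{i,j_i}$, which by \eqref{defF} is exactly $F^{(\bsig,\bp)}_{i,j_i}(\bx)=\lambda^{p_i'-1}x_{i,j_i}$. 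For (c) one runs the same algebraic step in reverse on $F^{(\bsig,\bp)}_i(\bx)=\theta_i\bx_i$ to obtain $\T_{s_i}(\bx^{[\bsig]})=\theta_i^{p_i-1}\psi_{p_i}(\bx_i)$ for each $i\in[d]$.

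The main subtlety in (c) is reconciling the $d$ a priori independent scalars $\theta_1^{p_1-1},\ldots,\theta_d^{p_d-1}$ with a single eigenvalue $\lambda$. The key observation is that the multilinearity of $f_T$ in its $s_i$-th argument yields the identity $\ps{\T_{s_i}(\bx^{[\bsig]})}{\bx_i}=f_T(\bx^{[\bsig]})$, valid for every $i$. Pairing the equation $\T_{s_i}(\bx^{[\bsig]})=\theta_i^{p_i-1}\psi_{p_i}(\bx_i)$ with $\bx_i$ and using $\norm{\bx_i}_{p_i}^{p_i}=1$ then gives $\theta_i^{p_i-1}=f_T(\bx^{[\bsig]})$, a quantity independent of $i$. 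Setting $\lambda:=f_T(\bx^{[\bsig]})\geq 0$ immediately exhibits $(\lambda,\bx)$ as a $(\bsig,\bp)$-eigenpair and forces $\theta_i=\lambda^{p_i'-1}$, completing (c).

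For the general equivalence (a)$\Leftrightarrow$(b), set $\alpha_i=\norm{\bx_i}_{p_i}$ and $\hat\bx=(\bx_1/\alpha_1,\ldots,\bx_d/\alpha_d)$. The multilinearity of $f_T$ together with the fact that $\T_{s_i}(\bz)$ does not depend on the $s_i$-th slot of $\bz$ shows that $F^{(\bsig,\bp)}$ is multi-homogeneous with homogeneity matrix $A$ from \eqref{defA_first}. Consequently $F^{(\bsig,\bp)}_i(\bx)=\theta_i\bx_i$ is equivalent, after absorbing the $\alpha_i$ through the homogeneity exponents, to $F^{(\bsig,\bp)}_i(\hat\bx)=\hat\theta_i\hat\bx_i$ for suitable rescaled $\hat\theta_i\geq 0$. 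Since $\hat\bx$ is normalized, applying (c) to $\hat\bx$ produces a common $\lambda$ and yields (a); conversely, (d) applied to $\hat\bx$ produces (b). The expected main obstacle is precisely the collapse of the $d$ scalars $\theta_i$ into a single eigenvalue $\lambda$ described in the previous paragraph, as this is what ties the projective fixed-point picture for $F^{(\bsig,\bp)}$ to the coupled tensor spectral problem \eqref{genspeceq}.
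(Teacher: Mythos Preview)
Your proposal is correct and follows essentially the same approach as the paper. The paper orders the argument as (a)$\Rightarrow$(b) (deriving (d) along the way) and then (b)$\Rightarrow$(a) (deriving (c) along the way), whereas you handle the normalized statements (c),(d) first and then lift to (a)$\Leftrightarrow$(b) via multi-homogeneity; but the algebraic content is identical, and in particular the crucial step---collapsing the $d$ scalars $\theta_i^{p_i-1}$ into a single $\lambda$ via the identity $\ps{\T_{s_i}(\bx^{[\bsig]})}{\bx_i}=f_T(\bx^{[\bsig]})$ and the normalization $\norm{\bx_i}_{p_i}=1$---is exactly the one the paper uses.
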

 \begin{proof}
 Let $\nu_i=|\sigma_i|$ for all $i\in[d]$.
 First assume that $\tilde \bx=\big(\frac{\bx_1}{\norm{\bx_1}_{p_1}},\ldots,\frac{\bx_d}{\norm{\bx_d}_{p_d}}\big)$ is a $(\bsig,\bp)$-eigenvector of $T$, then there exists $\lambda \geq 0$ such that for every $i\in[d]$, it holds
 \begin{equation*}
\lambda\norm{\bx_i}_{p_i}^{1-p_i}\psi_{p_i}(\bx_i)=\lambda \psi_{p_i}(\tilde \bx_i) = \T_{s_i}(\tilde\bx^{[\bsig]})= \Big(\norm{\bx_i}_{p_i}\prod_{j=1}^d\norm{\bx_j}^{-\nu_j}_{p_j}\Big) \T_{s_i}(\bx^{[\bsig]}).
 \end{equation*}
By rearranging the above equation and composing it by $\psi_{p_i'}$, we get
\begin{equation*}\label{tensvaltomhval}
F^{(\bsig,\bp)}_i(\bx) =\psi_{p_i'}\big(\T_{s_i}(\bx^{[\bsig]})\big)= \Big(\lambda\norm{\bx_i}_{p_i}^{-p_i}\prod_{j=1}^d\norm{\bx_j}^{\nu_j}_{p_j}\Big)^{p_i'-1} \bx_i = \theta_i \, \bx_i
\end{equation*}
and thus \eqref{equivsig1} implies \eqref{equivsig2}. In particular, note that if $\norm{\bx_i}_{p_i}=1$ for all $i\in[d]$, then \eqref{evaltenstomh} follows from the above equation.

Now suppose that there exists $\bt\in\R^d_+$ such that $F^{(\bsig,\bp)}(\bx) = \bt \otimes \bx$ %$F^{(\bsig,\bp)}_i(\bx) = \theta_i\bx_i$ for all $i$ 
and set $\tilde \bx=\Big(\frac{\bx_1}{\norm{\bx_1}_{p_1}},\ldots,\frac{\bx_d}{\norm{\bx_d}_{p_d}}\Big)$. Then, we have $F^{(\bsig,\bp)}(\tilde \bx) = \tilde \bt \otimes \tilde \bx$
%$F^{(\bsig,\bp)}_i(\tilde\bx) = \tilde\theta_i\tilde\bx_i$ 
where $\tilde \theta$ is defined as  $\tilde \theta_i = \theta_i\norm{\bx_i}_{p_i}^{p_i'}\big(\prod_{j=1}^d \norm{\bx_j}_{p_j}^{-\nu_j}\big)^{p_i'-1}$  for all $i\in[d]$. Hence, we get 
\begin{equation*}\T_{s_i}(\tilde \bx^{[\bsig]})=\psi_{p_i}\big(F^{(\bsig,\bp)}_i(\tilde\bx) \big)= \tilde\theta_i^{p_i-1}\psi_{p_i}(\tilde\bx_i) \qquad \forall i \in[d].
\end{equation*}
To conclude, we prove that there exists $\lambda\geq 0$ such that $\tilde\theta_i^{p_i-1}=\lambda $ for all $i\in[d]$. This follows from the fact that $\T_{s_i}(\tilde\bx^{[\bsig]})=\tilde\theta_i^{p_i-1}\psi_{p_i}(\tilde\bx_i)$ as it implies that
\begin{equation*}
f_T(\tilde \bx^{[\bsig]})=\ps{\tilde \bx_i}{\T_{s_i}(\tilde\bx^{[\bsig]})} = \tilde\theta_i^{p_i-1} \ps{\tilde \bx_i}{\psi_{p_i}(\tilde\bx_i)}=\tilde\theta_i^{p_i-1}\norm{\tilde \bx_i}_{p_i}^{p_i}=\tilde\theta_i^{p_i-1}.
\end{equation*}
Finally, if $\norm{\bx_i}_{p_i}=1$ for all $i$ and $\lambda =f_T(\tilde \bx^{[\bsig]})$, we have $\tilde \theta_i =\theta_i=\lambda^{p_i'-1}$ for all $i \in [d]$, which proves \eqref{evalmhtotens}.
\end{proof}

We can now show the connection between the spectral radius of the order preserving  multi-homogeneous mapping $F^{(\bsig,\bp)}$ and the $(\bsig,\bp)$-spectral radius of the tensor $T$. To this end, let us denote by $\S_{+}^{(\bsig,\bp)}$ the product of $p_i$-spheres in $\kone_+^{\bsig}$, i.e. $\S_{+}^{(\bsig,\bp)}=\{\bx\in\kone_{+}^{\bsig}\mid \norm{\bx_i}_{p_i}=1, \ i\in[d]\}$.
The spectral radius of $F^{(\bsig,\bp)}$ is defined as (see \cite[Section\ 4]{mhpfpaper}), 
\begin{equation*}
r_{\bb}(F^{(\bsig,\bp)})=\sup\Big\{ \prod_{i=1}^d \theta_i^{b_i}\ \Big|\ F^{(\bsig,\bp)}(\bx)=\bt\krog\bx \text{ for some } \bx\in\S_+^{(\bsig,\bp)}\Big\},
\end{equation*}
where we recall that $\bb\in\R^d$ is the unique positive eigenvector of $A^\top$ such that $\sum_{i=1}^d b_i=1$.
We relate $r_{\bb}(F^{(\bsig,\bp)}) $ and $r^{(\bsig,\bp)}(T)$ in the following: 
%Let us first recall that $G^{(\bsig,\bt)}$ is defined in \eqref{defG} and $r^{(\bsig,\bt)}(T)$ is defined in \eqref{defspecrad}.
\begin{lem}\label{specradlem}
%Suppose that $\bb\in\R^d_{++}$ satisfies $\sum_{i=1}^d b_i=1$ and there exists a 
Let $(\lambda,\bx)\in\R_+\times \S_+^{(\bsig,\bp)}$ be a  $(\bsig,\bp)$-eigenpair of $T$ such that $\lambda = r^{(\bsig,\bp)}(T)$, and let  $(\bt,\by)\in\R^d_+\times \S_+^{(\bsig,\bp)}$ be such that $F^{(\bsig,\bp)}(\by)=\bt \krog \by$ with  $\prod_{i=1}^d \theta_i^{b_i}=r_{\bb}(F^{(\bsig,\bp)})$. Then
\begin{equation}\label{realtespecrad}r^{(\bsig,\bp)}(T)=r_{\bb}(F^{(\bsig,\bp)})^{\gamma-1}, \quad \text{where}\quad \gamma = \frac{\sum_{i=1}^db_ip_i'}{{\sum_{i=1}^db_ip_i'}-1}\, .
\end{equation}
%Furthermore, $(r^{(\bsig,\bp)}(T),\by)$ is a $(\bsig,\bp)$-eigenpair of $T$.
\end{lem}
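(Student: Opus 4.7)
The plan is to exploit Lemma \ref{mhconnect} which provides a precise dictionary between $(\bsig,\bp)$-eigenpairs of $T$ supported on $\S_+^{(\bsig,\bp)}$ and fixed-point data $(\bt,\bz)\in\R^d_+\times\S_+^{(\bsig,\bp)}$ of $F^{(\bsig,\bp)}$. Specifically, by parts \eqref{evalmhtotens} and \eqref{evaltenstomh} of Lemma \ref{mhconnect}, for $\bz\in\S_+^{(\bsig,\bp)}$ the equality $(\lambda,\bz)$ is a $(\bsig,\bp)$-eigenpair of $T$ is equivalent to $F^{(\bsig,\bp)}_i(\bz)=\lambda^{p_i'-1}\bz_i$ for all $i\in[d]$. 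In other words, the admissible tuples $\bt$ appearing in the definition of $r_{\bb}(F^{(\bsig,\bp)})$ are exactly those of the form $\theta_i=\lambda^{p_i'-1}$ for some $(\bsig,\bp)$-eigenvalue $\lambda$ of $T$.

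Next I would compute $\prod_{i=1}^d\theta_i^{b_i}$ for such a tuple. Using the relation $\theta_i=\lambda^{p_i'-1}$ together with the normalization $\sum_{i=1}^d b_i=1$, one obtains
\begin{equation*}
\prod_{i=1}^d\theta_i^{b_i}=\lambda^{\sum_{i=1}^d b_i(p_i'-1)}=\lambda^{\,(\sum_{i=1}^d b_ip_i')-1}.
\end{equation*}
Since $p_i>1$ implies $p_i'=p_i/(p_i-1)>1$, we have $\sum_i b_ip_i'>\sum_i b_i=1$, so the exponent $\alpha:=\sum_i b_ip_i'-1$ is strictly positive and finite. This already shows $\gamma\in(1,\infty)$, because $\gamma-1=1/\alpha>0$. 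Moreover, the map $\lambda\mapsto\lambda^{\alpha}$ is a strictly increasing bijection on $\R_+$.

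Because this map is monotone, the maximum over the $(\bsig,\bp)$-eigenvalues of $T$ is transported to the maximum of $\prod_i\theta_i^{b_i}$ over the corresponding admissible tuples. By hypothesis, the supremum defining $r^{(\bsig,\bp)}(T)$ is attained by the pair $(\lambda,\bx)$, and the supremum defining $r_{\bb}(F^{(\bsig,\bp)})$ is attained by $(\bt,\by)$. Applying the dictionary above to $(\lambda,\bx)$ and to $(\bt,\by)$ separately gives the two inequalities $r^{(\bsig,\bp)}(T)^{\alpha}\le r_{\bb}(F^{(\bsig,\bp)})$ and $r^{(\bsig,\bp)}(T)^{\alpha}\ge r_{\bb}(F^{(\bsig,\bp)})$, i.e.\ equality. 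Raising both sides to the power $1/\alpha=\gamma-1$ yields the desired identity $r^{(\bsig,\bp)}(T)=r_{\bb}(F^{(\bsig,\bp)})^{\gamma-1}$.

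There is no real obstacle here beyond careful bookkeeping of exponents: the content is entirely contained in Lemma \ref{mhconnect}, and the proof is a short algebraic manipulation combined with the monotonicity of $t\mapsto t^{\alpha}$. The only point worth double-checking is that both suprema are genuinely attained (which is assumed in the statement) so that we may pass from the individual correspondence $\theta_i=\lambda^{p_i'-1}$ to the equality of the two spectral radii without worrying about limits of non-attained suprema.
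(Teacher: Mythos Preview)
Your proposal is correct and follows essentially the same approach as the paper's own proof: both use Lemma \ref{mhconnect} parts \eqref{evalmhtotens} and \eqref{evaltenstomh} to translate between the two spectral problems, compute $\prod_i\theta_i^{b_i}=\lambda^{\gamma'-1}$ with $\gamma'=\sum_i b_ip_i'$, and obtain the two inequalities from the attained maximizers $(\lambda,\bx)$ and $(\bt,\by)$ respectively. The paper's write-up is slightly more terse but the logic is identical.
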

\begin{proof}
Let $\gamma' = \sum_{i=1}^db_ip_i'$, then $\gamma'\geq \min_{j\in[d]}p_j'\sum_{i=1}^d b_i= \min_{j\in[d]}p_j'>1$, thus $\gamma = \frac{\gamma'}{\gamma'-1}\in(1,\infty)$ and $(\gamma-1)(\gamma'-1)=1$. Now, Lemma \ref{mhconnect}, \eqref{evaltenstomh} implies that $F^{(\bsig,\bp)}(\bx)=\blam\krog \bx$ with $\lambda_i=\lambda^{p_i'-1}$, hence we have
\begin{equation}\label{leqspecrad}
r^{(\bsig,\bp)}(T)^{\gamma'-1}=\lambda^{\gamma'-1}=\prod_{i=1}^d\lambda^{b_i(p_i'-1)} =\prod_{i=1}^d\lambda_i^{b_i}\leq r_{\bb}(F^{(\bsig,\bp)}).
\end{equation}
On the other hand, by Lemma \ref{mhconnect}, \eqref{evalmhtotens} we know that there exists $\theta\in\R_+$ such that $\theta_i=\theta^{p_i'-1}$ for all $i\in[d]$ and $\theta$ is a $(\bsig,\bp)$-eigenvalue of $T$. Hence, we have
\begin{equation}\label{geqspecrad}
r_{\bb}(F^{(\bsig,\bp)}) =\prod_{i=1}^d \theta_i^{b_i}=\prod_{i=1}^d \theta^{b_i(p_i'-1)}=\theta^{\gamma'-1}\leq r^{(\bsig,\bp)}(T)^{\gamma'-1}.
\end{equation}
\end{proof}

 \section{Classes of nonnegative tensors}\label{sec:nonnegativeT}
 We discuss here the different classes of nonnegative tensors given in Definition \ref{defirr}. We propose characterizations in terms of graphs for each of them and explain how they relate to a number of structural properties of the corresponding multi--homogeneous mapping $F^{(\bsig,\bp)}$. To this end, we first introduce the $\bsig$-graph of a nonnegative tensor $T$ and discuss some of its properties. Then, we analyze each of the nonnegative tensors classes in a separate subsection,  we show how they are relate with each other and we conclude with the proof of Theorem \ref{irrthm}. 
 % In particular, we introduce a  we give a characterization and we show how they relate for different \dimensionalpartition{}s of a fixed tensor. Finally, we discuss a hierarchy between these classes and give a proof of Theorem \ref{irrthm}. 
 
 \subsection{$\bsig$-graphs of nonnegative tensors}
 We propose a definition of graph associated to a nonnegative tensor and with respect to one of its shape partitions. We call this graph the $\bsig$-graph of $T$ and denote it $\G^{\bsig}(T)$. Simply put, the set of nodes of $\G^{\bsig}(T)$ is $\I^{\bsig}$ and there is an edge from $(k,l_k)$ to $(i,t_i)$, if the variable $x_{i,j_i}$ effectively appear in the expression of $\T_{s_k,l_k}(\bx^{[\bsig]})$. Formally, we have the following:
 \begin{defi}[$\bsig$-graph of a nonnegative tensor] Let $\bsig=\{\sigma_i\}_{i=1}^d$ be a \dimensionalpartition{} of $T\in\R^{N_1\times \ldots \times N_m}_+$. The $\bsig$-graph of $T$ is the directed graph $\G^{\bsig}(T)=(\I^{\bsig},\E^{\bsig}(T))$ defined as follows: The set of nodes is $\I^{\bsig}= \cup_{i=1}^d \{i\}\times [n_i]$ and there is an edge $\big((k,l_k),(i,t_i)\big)\in\E^{\bsig}(T)\subset \I^{\bsig}\times \I^{\bsig}$ if one of the following condition holds:
 \begin{itemize} 
 \item $(k,l_k)\neq (i,t_i)$ and there exists $j_1,\ldots,j_m$ such that $T_{j_1,\ldots,j_m}>0$, $j_{s_k}=l_k$ and $t_i\in\{j_a\mid a\in\sigma_i\}$. 
 \item $(k,l_k)= (i,t_i)$ and there exists $j_1,\ldots,j_m$ such that $T_{j_1,\ldots,j_m}>0$, $j_{s_k}=l_k$ and $t_i\in\{j_a\mid a\in\sigma_i\setminus\{s_i\}\}$.
 \end{itemize}
 \end{defi}
 Note that in the cases $d=1$ and $d=m$, $\G^{\bsig}(T)$ coincides with the graphs associated to $T$ introduced in Sections 4 and 1 of \cite{Fried}, respectively. Furthermore, when $d=2$, $\G^{\bsig}(T)$ coincides with the graph associated to $T$ introduced in Section 4 of \cite{Qi_rect}.  
 In particular, if $M\in\R^{n\times n}$ is a square matrix, then the \dimensionalpartition{}s of $M$ are $\bsig=\{\{1,2\}\}$ and $\tilde \bsig=\{\{1\},\{2\}\}$, and  $\G^{\bsig}(M)$ is the graph with $n$ nodes and adjacency matrix $M$, whereas $\G^{\tilde \bsig}(M)$ is the bipartite graph with $2n$ nodes and adjacency matrix $\begin{bsmallmatrix} 0 & M^\top \\ M & 0\end{bsmallmatrix}$. In the next example we illustrate the three graphs associated with a square tensor of order $3$.
\definecolor{myblue}{RGB}{255,255,255}%{66,146,198} 
\definecolor{myred}{RGB}{255,255,255} 
\definecolor{mygray}{RGB}{255,255,255}
 \begin{ex}\label{graphex}
Let $T\in\R^{3\times 3 \times 3}$ be defined as
\begin{equation*}
    T_{2,2,1}=T_{3,2,1}=T_{1,3,1}=T_{2,2,2}=T_{1,1,3}=1\quad \text{and}\quad T_{i,j,k}=0 \quad \text{otherwise.}
\end{equation*}
Furthermore, let $\bsig^1,\bsig^2,\bsig^3$ be the \dimensionalpartition{}s of $T$, namely:
\begin{equation}\label{shapex}
\bsig^1 = \{\{1,2,3\}\},\qquad \bsig^2 = \{\{1\},\{2,3\}\} \qquad\text{and}\qquad\bsig^3=\{\{1\},\{2\},\{3\}\}.
\end{equation}
The following three $\bsig$-graphs can be associated to $T$:
\begin{center}
%\begin{tabular}{ccc}
\begin{minipage}{.3\textwidth}
\begin{center}
\begin{tikzpicture}[scale =0.2, ->,>=stealth']
\tikzset{
    state/.style={
    circle,
           draw=black, thick,
           minimum height=1em,
           inner sep=2pt,
           text centered
           },
}
\node[state,anchor=center,fill=mygray] (1box) 
   {1};
\node[state, right of = 1box,fill=mygray] (2box) 
  {2};
\node[state, right of = 2box,fill=mygray] (3box) 
  {3};
  \node[state, right of = 3box, xshift=-0.1cm,draw=none] 
    {$\scriptstyle\{1,2,3\}$};
%  \node[state, below of = 2box,draw = none] {$\G^{\bsig^1}(T)$};
\draw [line width=.2mm] (1box) to [out=240,in=300,looseness=6, preaction={-triangle 90,thin,draw,shorten >=-1mm}] (1box);
\draw [line width=.2mm] (2box) to [out=240,in=300,looseness=6, preaction={-triangle 90,thin,draw,shorten >=-1mm}] (2box);
  \path[draw=black,solid,line width=.2mm,fill=black, preaction={-triangle 90,thin,draw,shorten >=-1mm}]
 (2box) edge (1box) 
 (3box) edge (2box) 
 ;
 \path[<->,draw=black,solid,line width=.2mm,fill=black, preaction={-triangle 90,thin,draw,shorten >=-1mm}]
 (1box) edge[bend left=60] (3box);
\end{tikzpicture}

$\G^{\bsig^1}(T)$ $\qquad$
\end{center}
\end{minipage}%&
\hfill
\begin{minipage}{.29\textwidth}
\begin{center}
\begin{tikzpicture}[scale =0.2, ->,>=stealth']
\tikzset{
    state/.style={
    circle,
           draw=black, thick,
           minimum height=1em,
           inner sep=2pt,
           text centered
           },
}
\node[state,  draw=none, anchor = center] (cent) {};
\node[state, above of = cent, yshift=.06mm, fill=mygray] (12box) 
   {2};
\node[state, left of = 12box,fill=mygray] (11box) 
  {1};
\node[state, right of = 12box,fill=mygray] (13box) 
  {3};
   \node[state, right of = 13box,xshift=-0.1cm,draw=none] 
    {$\scriptstyle\{1\}\phantom{,2,3}$};
  \node[state, below of = 12box, yshift=-.12mm,fill=myblue] (22box) 
     {2};
  \node[state, left of = 22box,fill=myblue] (21box) 
    {1};
  \node[state, right of = 22box,fill=myblue] (23box) 
    {3};
 \node[state, right of = 23box,draw=none,xshift=-0.1cm] 
    {$\scriptstyle\{2,3\}\phantom{,1}$};
%\node[state, anchor = center, draw = none, yshift = -1.3cm] {$\G^{\bsig^2}(T)$};%   \node[state, above of = 12box,draw = none, yshift = -0.35cm] (labl) {$\G^{\bsig^2}(T)$};
\draw [line width=.22mm] (22box) to [out=240,in=300,looseness=6, preaction={-triangle 90,thin,draw,shorten >=-1mm}] (22box);
\path[<->,draw=black,solid,line width=.2mm,fill=black, preaction={-triangle 90,thin,draw,shorten >=-1mm}]
 (21box) edge (11box)  
 (23box) edge (11box)    
 (12box) edge[bend left=20] (22box)   
 (13box) edge (22box)   
 (21box) edge[bend right=60] (23box) 
 ;
 \path[draw=black,solid,line width=.2mm,fill=black, preaction={-triangle 90,thin,draw,shorten >=-1mm}]
 (12box) edge[bend right=5] (21box)   
  (13box) edge (21box)    
  (22box) edge (21box)   
  ;
\end{tikzpicture}

$\G^{\bsig^2}(T)$ $\qquad$
\end{center}
\end{minipage}%&
\hfill
\begin{minipage}{.3\textwidth}
\begin{center}
\begin{tikzpicture}[scale =0.2, ->,>=stealth']
\tikzset{
    state/.style={
    circle,
%           rectangle,
%           rounded corners,
           draw=black, thick,
           minimum height=1em,
           inner sep=2pt,
           text centered
           },
}
\node[state,anchor=center, xshift = -0.7cm, yshift = 1.14cm,fill=mygray] (11box) 
   {1};
\node[state,anchor=center, xshift = -1.02cm, yshift = 0.59cm,fill=mygray] (12box) 
   {2};
\node[state,anchor=center, xshift = -1.33cm, yshift = 0.04cm,fill=mygray] (13box) 
   {3};
\node[state,anchor=center, xshift = -0.63cm, yshift = -1cm,fill=myblue] (21box) 
   {1};
\node[state,anchor=center, xshift = 0cm, yshift = -1cm,fill=myblue] (22box) 
   {2};
\node[state,anchor=center, xshift = 0.63cm, yshift = -1cm,fill=myblue] (23box) 
   {3};
  \node[state,anchor=center, xshift = 1.33cm, yshift = 0.04cm,fill=myred] (31box) 
      {1};
   \node[state,anchor=center, xshift = 1.02cm, yshift = 0.59cm,fill=myred] (32box) 
      {2};
   \node[state,anchor=center, xshift = 0.7cm, yshift = 1.14cm,fill=myred] (33box) 
      {3};
 \node[state, anchor = center,draw=none, xshift=-1.65cm, yshift = -0.51cm] 
    {$\scriptstyle\{1\}$};     
    \node[state, anchor = center,draw=none, xshift=1.27cm, yshift = -1.02cm]
    {$\scriptstyle\{2\}$};   
    \node[state, anchor = center,draw=none, xshift=0.38cm, yshift = 1.69cm]
    {$\scriptstyle\{3\}$};   
%\node[state, anchor = center, draw = none, yshift = -1.7cm] {$\G^{\bsig^3}(T)$};
    %  \node[state, left of = 21box,draw = none, yshift = 0.5cm,scale=1] {$\G^{\bsig^3}(T)$};
\path[<->,draw=black,solid,line width=.2mm,fill=black, preaction={-triangle 90,thin,draw,shorten >=-1mm}]
(12box) edge[bend left = 10] (22box) 
(22box) edge[bend left = 3] (31box) 
(12box) edge[bend right = 12.5] (31box)
(13box) edge[bend left = 3.5] (22box) 
(13box) edge[bend right = 16.3] (31box)
(11box) edge[bend left = 0] (23box) 
(23box) edge[bend right = 3] (31box) 
(11box) edge[bend right = 8] (31box)
(22box) edge[bend left = 7] (32box) 
(12box) edge (32box)
(11box) edge[bend left = 10] (21box) 
(21box) edge (33box) 
(11box) edge (33box)
;   
\end{tikzpicture}

$\quad$ $\G^{\bsig^3}(T)$
\end{center}
\end{minipage}

\end{center}
 \end{ex}
%
%
%
%

% Finally, if $\G(F)$, the graph of $F$, is strongly connected, then Theorem 4.3 \cite{mhpfpaper} ensures the existence of a positive eigenvector. 
% The  definition of $\mathcal G(F)$ can be found in Definition 4.2 \cite{mhpfpaper} and is recalled here for the sake of completeness:

In the following lemma we show that the $\bsig$--graph of a nonnegative tensor $T$  coincides with the graph of the corresponding multi--homogeneous mapping $F^{(\bsig,\bp)}$. Moreover, we prove that the Jacobian of the map is always an adjacency matrix for such a graph. To this end, we first recall from \cite{mhpfpaper} the definition of graph of a multi--homogeneous map. 
\begin{defi}[Graph of a multi-homogeneous mapping]The graph $\G(F)$ of an order-preserving multi-homogeneous mapping $F\colon\kone_{+}^{\bsig}\to\kone_{+}^{\bsig}$ is the pair $\G(F)=(\I^{\bsig},\E(F))$, where $\I^{\bsig}$ is the set of nodes and an edge  $\big((k,l_k),(i,j_i)\big)\in\E(F)$ exists if and only if $\lim_{z\to\infty}F_{k,l_k}(\be^{(i,j_i)}(z))=\infty$, where $\be^{(k,l_k)}\colon\R_+\to\kone_{+}^{\bsig}$ is defined as $(\be^{(i,j_i)}(z))_{i,j_i}=z$ and $(\be^{(i,j_i)}(z))_{\eta,t_\eta}=1$ for all $(\eta,t_\eta)\in\I^{\bsig}\setminus\{(i,j_i)\}$.
\end{defi}
\begin{lem}\label{characgraph}
	Let $\G^{\bsig}(T)=(\I^{\bsig},\E^{\bsig}(T))$ be the $\bsig$-graph of $T$ and $\G(F^{(\bsig,\bp)})=(\I^{\bsig},\E(F^{(\bsig,\bp)}))$ be the graph of $F^{(\bsig,\bp)}$ as multi-homogeneous mapping. Then, for every $(k,l_k),(i,t_i)\in\I^{\bsig}$, the following are equivalent:
	\begin{enumerate}[(i)]
		\item $\big((k,l_k),(i,t_i)\big)\in\E^{\bsig}(T)$.\label{edge}
		\item For all $\bx\in\kone_{++}^{\bsig}$, $\tfrac{\partial}{\partial x_{i,t_i}}F_{k,l_k}^{(\bsig,\bp)}(\bx)$ exists and it holds $\tfrac{\partial}{\partial x_{i,t_i}}F_{k,l_k}^{(\bsig,\bp)}(\bx)>0$.\label{jacoentry}
		\item $\big((k,l_k),(i,t_i)\big)\in\E(F^{(\bsig,\bp)})$.\label{mhedge}
		%Define $\be^{(k,l_k)}\colon\R_+\to\kone_{+}$ as $(\be^{(k,l_k)}(z))_{k,l_k}=z$, and $(\be^{(k,l_k)}(z))_{\eta,j_\eta}=1$ for all $(\eta,j_\eta)\in\I^{\bsig}\setminus\{(k,l_k)\}$. Then, $\lim_{z\to\infty}F_{k,l_k}^{(\bsig,\bp)}(\be^{(i,t_i)}(z))=\infty$.
	\end{enumerate}
\end{lem}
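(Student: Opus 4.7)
The approach is straightforward once one writes $\T_{k,l_k}(\bx^{[\bsig]})$ explicitly as a polynomial in the variables $x_{i,t_i}$ with nonnegative coefficients; then (ii) is about its partial derivatives and (iii) is about its asymptotic behaviour along the ``test'' curves $z\mapsto \be^{(i,t_i)}(z)$, and both match (i) after a routine index unpacking. The plan is to prove the three-way equivalence through the single ``master'' characterisation that the monomial $(\prod_{a\in\sigma_i,\,a\neq s_k} x_{i,t_i})\cdot(\text{other vars})$ occurs with positive coefficient in $\T_{k,l_k}(\bx^{[\bsig]})$.

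\smallskip

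First I would write, using $\iota(a)$ for the unique $i$ with $a\in\sigma_i$,
\begin{equation*}
\T_{k,l_k}(\bx^{[\bsig]}) = \!\!\sum_{\substack{j_1\in[N_1],\ldots,j_m\in[N_m]\\ j_{s_k}=l_k}}\!\!  T_{j_1,\ldots,j_m}\prod_{a\in[m]\setminus\{s_k\}} x_{\iota(a),j_a},
\end{equation*}
which is a polynomial in $\bx\in\kone^{\bsig}_+$ with nonnegative coefficients. Differentiating in $x_{i,t_i}$ gives
\begin{equation*}
\tfrac{\partial}{\partial x_{i,t_i}}\T_{k,l_k}(\bx^{[\bsig]}) = \!\!\sum_{a^*\in\sigma_i\setminus\{s_k\}}\,\sum_{\substack{j_1,\ldots,j_m\\ j_{s_k}=l_k,\, j_{a^*}=t_i}}\!\!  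T_{j_1,\ldots,j_m}\!\prod_{a\in[m]\setminus\{s_k,a^*\}}\!\! x_{\iota(a),j_a}.
\end{equation*}
This is again a polynomial with nonnegative coefficients, so it is positive at some (equivalently, every) $\bx\in\kone^{\bsig}_{++}$ if and only if there exist $j_1,\ldots,j_m$ with $T_{j_1,\ldots,j_m}>0$, $j_{s_k}=l_k$, and $j_{a^*}=t_i$ for some $a^*\in\sigma_i\setminus\{s_k\}$. Splitting on whether $(k,l_k)=(i,t_i)$ or not, and using that $s_k\in\sigma_i$ only when $k=i$ (with $s_k=s_i$), I match this condition exactly with the two clauses defining $\E^{\bsig}(T)$, giving (i)$\Leftrightarrow$ the positivity of $\partial_{x_{i,t_i}}\T_{k,l_k}(\bx^{[\bsig]})$ on $\kone^{\bsig}_{++}$.

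\smallskip

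For (i)$\Leftrightarrow$(ii), I would apply the chain rule: since $F_{k,l_k}^{(\bsig,\bp)}=(\T_{k,l_k})^{p_k'-1}$ and $p_k'-1>0$, for $\bx\in\kone^{\bsig}_{++}$ with $\T_{k,l_k}(\bx^{[\bsig]})>0$ one has $\partial_{x_{i,t_i}}F_{k,l_k}^{(\bsig,\bp)}(\bx)>0$ iff $\partial_{x_{i,t_i}}\T_{k,l_k}(\bx^{[\bsig]})>0$. The degenerate case $\T_{k,l_k}\equiv 0$ on $\kone^{\bsig}_{++}$ (which is the only way $\T_{k,l_k}$ can vanish at a positive point, since the coefficients are nonnegative) forces $\partial_{x_{i,t_i}}\T_{k,l_k}\equiv 0$ and then $F_{k,l_k}^{(\bsig,\bp)}\equiv 0$, so both sides of the equivalence fail together.

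\smallskip

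For (i)$\Leftrightarrow$(iii), I would substitute $\bx=\be^{(i,t_i)}(z)$ into the polynomial above: a factor $x_{\iota(a),j_a}$ equals $z$ exactly when $\iota(a)=i$ and $j_a=t_i$, and equals $1$ otherwise. Hence $z\mapsto \T_{k,l_k}((\be^{(i,t_i)}(z))^{[\bsig]})$ is a polynomial in $z$ with nonnegative coefficients, and it has positive degree iff some monomial contains at least one factor $z$, i.e.\ iff there exist $j_1,\ldots,j_m$ with $T_{j_1,\ldots,j_m}>0$, $j_{s_k}=l_k$ and $j_{a^*}=t_i$ for some $a^*\in\sigma_i\setminus\{s_k\}$. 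Since $p_k'-1>0$, composing with $t\mapsto t^{p_k'-1}$ preserves divergence at infinity, so $F_{k,l_k}^{(\bsig,\bp)}(\be^{(i,t_i)}(z))\to\infty$ iff the polynomial has positive degree iff the same condition holds, which is again the edge condition for $\E^{\bsig}(T)$ already matched in the first step.

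\smallskip

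The only genuinely delicate point is the bookkeeping around the ``diagonal'' case $(k,l_k)=(i,t_i)$: there the clause in the definition of $\E^{\bsig}(T)$ demands $t_i\in\{j_a:a\in\sigma_i\setminus\{s_i\}\}$, which is exactly why the summation index $a^*$ in the derivative must be restricted to $\sigma_i\setminus\{s_k\}$ (and $s_k=s_i$ in this case). Everything else is routine reading off of polynomial coefficients.
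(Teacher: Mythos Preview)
Your argument is correct and self-contained. You reduce all three conditions to a single combinatorial ``master'' condition on the indices of $T$ by writing $\T_{k,l_k}(\bx^{[\bsig]})$ explicitly as a polynomial with nonnegative coefficients, and then read off both the partial derivative and the behaviour along $\be^{(i,t_i)}(z)$ from that expansion. The case analysis for the diagonal $(k,l_k)=(i,t_i)$ is handled correctly, as is the degenerate case $\T_{k,l_k}\equiv 0$ when applying the chain rule.

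The paper proceeds differently: it proves the implications cyclically, (i)$\Rightarrow$(ii)$\Rightarrow$(iii)$\Rightarrow$(i). For (i)$\Rightarrow$(ii) and (iii)$\Rightarrow$(i) its computations are essentially the same as yours. The real divergence is in (ii)$\Rightarrow$(iii): the paper invokes an external result (Lemma~2.5 of \cite{mhpfpaper}) to deduce $A_{k,i}>0$, and then uses the Euler identity for multi-homogeneous maps, $A_{k,i}F_{k,l_k}(\bx)=\sum_{j_i} x_{i,j_i}\,\partial_{x_{i,j_i}}F_{k,l_k}(\bx)$, to lower-bound $F_{k,l_k}(\be^{(i,t_i)}(z))$ by $z$ times a positive constant. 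Your approach sidesteps both the external reference and the Euler argument by substituting directly into the explicit polynomial and checking its degree in $z$. What the paper's route buys is that the step (ii)$\Rightarrow$(iii) would work verbatim for any order-preserving multi-homogeneous map, not just the polynomial $F^{(\bsig,\bp)}$; what your route buys is a shorter, fully elementary proof tailored to the case at hand.
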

\begin{proof}
	\eqref{edge}$\Rightarrow$\eqref{jacoentry}: If $\big((k,l_k),(i,t_i)\big)\in\E^{\bsig}(T)$, there exist indexes $j_1,\ldots,j_m$ such that $T_{j_1,\ldots,j_m}>0$, $j_{s_k}=l_k$ and $t_i\in \varsigma_i$ where $\varsigma_i=\{j_{a}\mid a\in\sigma_i\}$ if $(k,l_k)\neq(i,t_i)$ and $\varsigma_i=\{j_{a}\mid a\in\sigma_i\sauf\{s_i\}\}$ if $(k,l_k)=(i,t_i)$. Then, for $\bx\in\kone^{\bsig}_{++}$, we have
		\begin{equation*} 
		F_{k,l_k}^{(\bsig,\bp)}(\bx)= \big(\T_{s_k,l_k}(\bx^{[\bsig]})\big)^{p_k'-1}\geq \Big(T_{j_1,\ldots,j_m}\xi^{(\bsig)}_{j_1,\ldots,j_m}(\bx)\Big)^{p_k'-1}>0,
		\end{equation*}
		$$\text{with}\qquad\xi^{(\bsig)}_{j_1,\ldots,j_m}(\bx)=\Big(\prod_{l=1, l \neq k}^d\prod_{t\in\sigma_l}x_{l,j_t}\Big) \prod_{t\in\sigma_k, t\neq s_k}x_{k,j_t}.$$
		It follows that $\tfrac{\partial}{\partial x_{i,t_i}}F_{k,l_k}^{(\bsig,\bp)}(\bx)$ exists since $\bx \mapsto \T_{s_k,l_k}(\bx^{[\bsig]})$ is a polynomial and for all $\alpha >0$, $z\mapsto z^{\alpha}$ is differentiable at $z>0$. It holds
		\begin{align}\label{gradfsig}
		\frac{\partial}{\partial x_{i,t_i}}F_{k,l_k}^{(\bsig,\bp)}(\bx)&=(p_k'-1)\big(\T_{s_k,l_k}(\bx^{[\bsig]})\big)^{p_k'-2}\,\frac{\partial}{\partial x_{i,t_i}}\T_{s_k,l_k}(\bx^{[\bsig]})\\
		&\geq (p_k'-1)\big(\T_{s_k,l_k}(\bx^{[\bsig]})\big)^{p_k'-2} T_{j_1,\ldots,j_m}\frac{\partial}{\partial x_{i,t_i}}\xi^{(\bsig)}_{j_1,\ldots,j_m}(\bx).\notag
		\end{align}
		It holds $\frac{\partial}{\partial x_{i,t_i}}\xi^{(\bsig)}_{j_1,\ldots,j_m}(\bx)>0$ since $t_i\in \varsigma_i$ and $x\in\kone^{\bsig}_{++}$. Hence, $\frac{\partial}{\partial x_{i,t_i}}F_{k,l_k}^{(\bsig,\bp)}(\bx)>0$.
		% Note that we have used the fact $x_{i,t_i}$ effectively appears in the right hand side of the above equation by construction of $\G^{\bsig}(T)$.\newline
		\eqref{jacoentry}$\Rightarrow$\eqref{mhedge}: Let $z>0$, then $\be^{(i,t_i)}(z)\in\kone_{++}^{\bsig}$ so that $\frac{\partial}{\partial x_{i,t_i}}F_{k,l_k}^{(\bsig,\bp)}(\be^{(i,t_i)}(z))>0$. The equality in \eqref{gradfsig}, implies that $$0<\frac{\partial}{\partial x_{i,t_i}}\T_{s_k,l_k}\big((\be^{(i,t_i)}(z))^{[\bsig]}\big)=\frac{d}{d\,z}f(z)\quad\text{with}\quad f(z)=\T_{s_k,l_k}\big((\be^{(i,t_i)}(z))^{[\bsig]}\big).$$ 
		It follows that $f$ is a nonconstant polynomial in $z$ with nonnegative coefficients and thus $\lim_{z\to\infty}f(z)=\infty$. As $F_{k,l_k}^{(\bsig,\bp)}(\be^{(i,t_i)}(z))=f(z)^{p_k'-1}$ and $p_k'-1>0$ we have $\big((k,l_k),(i,t_i)\big)\in\E(F^{(\bsig,\bp)})$.
	\newline
	\eqref{mhedge}$\Rightarrow$\eqref{edge}: We prove that if \eqref{edge} does not hold, then \eqref{mhedge} does not hold either. Indeed, if $\big((k,l_k),(i,t_i)\big)\notin\E^{\bsig}(T)$, then by construction of $\be^{(i,t_i)}(z)$, with $j_{s_k}=l_k$ we have
	$F_{k,l_k}^{(\bsig,\bp)}(\be^{(i,t_i)}(z)) = \big(\sum_{l=1, l\neq s_k}^m\sum_{j_l=1}^{N_l}T_{j_1,\ldots,j_m}\big)^{p_k'-1}.$
	As this expression is bounded and constant in $z$, \eqref{mhedge} can not hold.
\end{proof}
% \sout{In particular, Lemma \ref{characgraph} implies that the Jacobian matrix of $F^{(\bp,\bsig)}$ is an adjacency matrix of $G^{\bsig}(T)$.} \fra{said above before lemma} %We have the following further properties of $\bsig$-weak irreducibility.
 
 \subsection{$\bsig$-strict nonnegativity}\label{strictnnegsec}
The $\bsig$-strict nonnegativity condition for a nonnegative tensor corresponds to the requirement that the associated multi-homogeneous map $F^{(\bsig,\bp)}$ is positive, i.e.\ $F^{(\bsig,\bp)}(\bx) \in \kone^{\bsig}_{++}$ for every $\bx\in\kone^{\bsig}_{++}$ This is shown by the following 
 \begin{lem}\label{characpos}
 The followings are equivalent:
 \begin{enumerate}[(i)]
 \item $T$ is $\bsig$-strictly nonnegative. \label{pos0}
 \item $F^{(\bsig,\bp)}(\bx)\in\kone^{\bsig}_{++}$ for every $\bx\in\kone^{\bsig}_{++}$,\label{pos1}
 \item $F^{(\bsig,\bp)}(\ones)\in\kone^{\bsig}_{++}$,\label{pos2}
 \item For every $(i,l_i)\in \I^{\bsig}$, there exists $j_1,\ldots,j_m$ with $T_{j_1,\ldots,j_m}>0$ and $j_{s_i}=l_i$.\label{pos3}
 \end{enumerate}
 \end{lem}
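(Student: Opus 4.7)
The plan is to establish the chain of equivalences (iv) $\Leftrightarrow$ (iii) $\Leftrightarrow$ (ii) and (iv) $\Leftrightarrow$ (i), all of which rest on the single observation that $\T_{s_i,l_i}(\bx^{[\bsig]})$ is a polynomial in the components of $\bx$ with nonnegative coefficients, so that its value at $\ones$ is precisely the sum of its coefficients. Here $\T_{i,l_i}$ in the statement of \eqref{pos0}--\eqref{pos3} is understood as $\T_{s_i,l_i}$ (the component of $\grad f_T$ associated to the representative $s_i$ of $\sigma_i$).

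For (iv) $\Leftrightarrow$ (iii), I would simply unfold the definition:
\begin{equation*}
\T_{s_i,l_i}(\ones^{[\bsig]}) \;=\; \sum_{\substack{j_1,\dots,j_m\\ j_{s_i}=l_i}} T_{j_1,\dots,j_m}\, ,
\end{equation*}
and observe that, since $p_i'-1>0$, condition $F^{(\bsig,\bp)}_{i,l_i}(\ones)=\T_{s_i,l_i}(\ones^{[\bsig]})^{p_i'-1}>0$ is equivalent to the existence of at least one index tuple with $T_{j_1,\dots,j_m}>0$ and $j_{s_i}=l_i$. Next, (ii) $\Rightarrow$ (iii) is immediate since $\ones\in\kone^{\bsig}_{++}$, while for (iii) $\Rightarrow$ (ii) I would argue that if $\T_{s_i,l_i}(\ones^{[\bsig]})>0$ then at least one monomial of the polynomial $\T_{s_i,l_i}(\bx^{[\bsig]})$ has positive coefficient, and such a monomial evaluates to a positive value at every $\bx\in\kone^{\bsig}_{++}$; hence $F^{(\bsig,\bp)}(\bx)\in\kone^{\bsig}_{++}$.

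For (i) $\Leftrightarrow$ (iv) I would argue by contrapositive in one direction and by explicit differentiation in the other. If (iv) fails for some $(i,l_i)$, then every summand in the expression above is zero, which forces $\T_{s_i,l_i}(\bx^{[\bsig]})\equiv 0$ as a polynomial (all its coefficients vanish), and consequently the $(i,l_i)$-th row of $M$ is identically zero, contradicting $\bsig$-strict nonnegativity. Conversely, if (iv) holds for $(i,l_i)$, then $\T_{s_i,l_i}(\bx^{[\bsig]})$ contains a monomial with positive coefficient; since the polynomial $\T_{s_i,l_i}$ is multi-homogeneous of total degree $m-1\geq 1$, at least one variable $x_{k,l_k}$ appears in that monomial, and its partial derivative with respect to $x_{k,l_k}$ is a nonzero polynomial with nonnegative coefficients, hence strictly positive at $\ones$. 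This produces the required nonzero entry $M_{(i,l_i),(k,l_k)}$ in the row.

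The argument is essentially a bookkeeping exercise, so I do not foresee a serious obstacle; the only delicate point is keeping track of the two different indexings (the $[m]$-indexing used to define $\T_i$ versus the $[d]$-indexing inherited from $\bsig$ via the representatives $s_i$), and using the multi-homogeneity of $\bx\mapsto\T_{s_i,l_i}(\bx^{[\bsig]})$ to justify that no constant term can hide in this polynomial when $m\geq 2$, which is what makes the partial-derivative step in the last implication work.
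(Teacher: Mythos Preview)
Your argument is correct and rests on the same elementary observation as the paper: $\T_{s_i,l_i}(\bx^{[\bsig]})$ is a polynomial with nonnegative coefficients, so positivity at $\ones$, positivity at every positive $\bx$, and the existence of a nonzero coefficient are all equivalent. The only difference is in packaging: the paper runs a single cycle $(i)\Rightarrow(ii)\Rightarrow(iii)\Rightarrow(iv)\Rightarrow(i)$, invoking Euler's identity for the step $(i)\Rightarrow(ii)$ and the graph Lemma~\ref{characgraph} for $(iv)\Rightarrow(i)$, whereas you do $(iv)\Leftrightarrow(iii)$, $(iii)\Leftrightarrow(ii)$ and $(iv)\Leftrightarrow(i)$ directly by differentiating a monomial. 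Your route is slightly more self-contained since it avoids appealing to the graph characterization, while the paper's route makes the link to $\G^{\bsig}(T)$ explicit; both implicitly use $m\geq 2$ in the last implication, which you correctly flag.
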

 \begin{proof}
 $\eqref{pos0}\Rightarrow\eqref{pos1}$: Let $(i,l_i)\in\I^{\bsig}$ and $\bx\in\kone_{++}^{\bsig}$, we show that $F^{(\bsig,\bp)}_{i,l_i}>0$. As $T$ is $\bsig$-strictly nonnegative, there exists $(k,j_k)\in\I^{\bsig}$ such that the matrix $M$ of Definition \ref{defirr} satisfies $M_{(i,l_i),{(k,j_k)}}>0$. Lemma \ref{characgraph} then implies $\frac{\partial}{\partial x_{k,j_k}}\T_{s_i,l_i}(\bx^{[\bsig]})>0$ and so
 \begin{equation*}
F^{(\bsig,\bp)}_{i,l_i}(\bx)=\big(\T_{s_i,l_i}(\bx^{[\bsig]})\big)^{p_i'-1}=\Big(\frac{1}{\nu_k}\sum_{l_k=1}^{n_k}\frac{\partial}{\partial x_{k,l_k}}\T_{s_i,l_i}(\bx^{[\bsig]})x_{k,l_k}\Big)^{p_i'-1}>0,
 \end{equation*}
where we have used Euler's theorem for homogeneous functions in the second equality.
$\eqref{pos1}\Rightarrow\eqref{pos2}$ is obvious.
$\eqref{pos2}\Rightarrow\eqref{pos3}$: Let $(i,l_i)\in\I^{\bsig}$, then $0<F^{(\bsig,\bp)}_{i,l_i}(\ones)$. The claim follows from $F^{(\bsig,\bp)}_{i,l_i}(\ones)=\big(\sum_{t=1, t\neq s_i}^m\sum_{j_t=1}^{N_t}T_{j_1,\ldots,j_m}\big)^{p_i'-1}$, where $j_{s_i}=l_i$. 
$\eqref{pos3}\Rightarrow\eqref{pos0}$: Let $(i,l_i)\in\I^{\bsig}$. There exists $j_1,\ldots,j_m$ such that $T_{j_1,\ldots,j_m}>0$ and $j_{s_i}=l_i$. If $d>1$, then $((i,l_i),(k,j_{s_k}))\in\E^{\bsig}(T)$ for $k\neq i$, and if $d=1$, then $i=1$ and $((i,l_i),(k,j_{2}))\in\E^{\bsig}(T)$. In either cases, the $(i,l_i)$-th row of $M$ has at least one positive entry which concludes the proof.
%
%Then Lemma \ref{characgraph} implies that for any $k$, it holds $((i,l_i),(k,j_{t_k}))\in\E^{\bsig}(T)$ where $t_k=s_k$ if $d>1$ and $t_k = s_k+1$ otherwise. Hence, $M_{(i,l_i),(k,t_k)}>0$ and the proof is done.
\end{proof}

Note that a direct consequence of Lemma \ref{characpos} \eqref{pos3} implies that the $\bsig$-strict nonnegativity property is preserved by the shape partitions' partial ordering of Definition \ref{def:partial_order}. We state this observation in the next lemma, whose straightforward proof is omitted for brevity.
 \begin{lem}\label{strinegincl}
 Let $\bsig=\{\sigma_i\}_{i=1}^{d}$, $\tilde \bsig=\{\tilde\sigma_i\}_{i=1}^{\tilde d}$ be \dimensionalpartition{}s of $T$ such that $\bsig\sqsubseteq \tilde\bsig$. If $T$ is $\bsig$-strictly nonnegative, then it is $\tilde\bsig$-strictly nonnegative.
 \end{lem}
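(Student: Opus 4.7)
The plan is to use the equivalent characterization of $\bsig$-strict nonnegativity given by Lemma \ref{characpos}, specifically point \eqref{pos3}, which reformulates the property purely in terms of the nonzero pattern of $T$: for every $(i,l_i)\in \I^{\bsig}$ there exist indices $j_1,\ldots,j_m$ with $T_{j_1,\ldots,j_m}>0$ and $j_{s_i}=l_i$. With this combinatorial formulation the lemma reduces to showing that for every $(\tilde i,\tilde l_{\tilde i})\in \I^{\tilde\bsig}$ one can find a matching element $(k,l_k)\in \I^{\bsig}$ so that the $\bsig$-assumption automatically delivers the required tensor entry for $\tilde \bsig$.

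Concretely, I would fix $(\tilde i, \tilde l_{\tilde i}) \in \I^{\tilde\bsig}$ and locate the unique block $\sigma_k$ of the finer partition containing the integer $\tilde s_{\tilde i}\in [m]$. Since $\tilde s_{\tilde i}\in \sigma_k \subset \tilde\sigma_{\pi(k)}$ and at the same time $\tilde s_{\tilde i}\in \tilde \sigma_{\tilde i}$, the disjointness of $\tilde\bsig$ forces $\pi(k)=\tilde i$, and hence $\sigma_k \subset \tilde\sigma_{\tilde i}$. Two small observations now follow from the definition of a \dimensionalpartition{}. First, all indices of $\sigma_k$ share the same dimension, so $n_k=N_{\tilde s_{\tilde i}}=\tilde n_{\tilde i}$; in particular $\tilde l_{\tilde i}\in [n_k]$, meaning the pair $(k,\tilde l_{\tilde i})$ belongs to $\I^{\bsig}$. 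Second, since $s_k=\min\sigma_k \leq \tilde s_{\tilde i}$ and $\tilde s_{\tilde i}=\min\tilde\sigma_{\tilde i}\leq s_k$ (the latter because $s_k\in\sigma_k\subset \tilde\sigma_{\tilde i}$), we obtain the key equality $s_k=\tilde s_{\tilde i}$.

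With these identifications in place, applying $\bsig$-strict nonnegativity to $(k,\tilde l_{\tilde i})\in \I^{\bsig}$ via Lemma \ref{characpos} \eqref{pos3} produces indices $j_1,\ldots,j_m$ with $T_{j_1,\ldots,j_m}>0$ and $j_{s_k}=\tilde l_{\tilde i}$; but $s_k=\tilde s_{\tilde i}$, so this reads $j_{\tilde s_{\tilde i}}=\tilde l_{\tilde i}$, which is exactly what Lemma \ref{characpos} \eqref{pos3} demands for $\tilde\bsig$-strict nonnegativity. The only mildly delicate step is the bookkeeping showing $s_k=\tilde s_{\tilde i}$ and $\pi(k)=\tilde i$, but once one exploits that both $\bsig$ and $\tilde\bsig$ are genuine partitions and that the minimum of a set is monotone under inclusion, the argument is immediate; no analytic input is needed beyond Lemma \ref{characpos}.
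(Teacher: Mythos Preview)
Your proof is correct and follows exactly the route the paper indicates: the paper states the lemma as a direct consequence of Lemma \ref{characpos} \eqref{pos3} without further detail, and you have spelled out precisely that argument, including the bookkeeping that $s_k=\tilde s_{\tilde i}$ and $n_k=\tilde n_{\tilde i}$.
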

 
%However, if $\bsig \not\sqsubseteq \tilde\bsig$, it can be that  $F^{(\bsig,\bp)}(\ones)\in\kone^{\bsig}_{++}$ and $F^{(\tilde\bsig,\tilde\bp)}(\ones)\notin\kone^{\tilde\bsig}_{++}$.

Before  concluding this subsection  we want to stress that $\bsig$-strict nonnegativity is a very mild condition as it still allows $T$ to be very sparse. This is illustrated by the following:
 \begin{ex}\label{weaksneg}
 Let $T\in\R_+^{n\times n\times \ldots \times n}$ be an $m$-th order tensor so that $T_{j_1,\ldots,j_m}>0$ if and only if $j_1=\ldots=j_m$. Then, for any \dimensionalpartition{} $\bsig=\{\sigma_i\}_{i=1}^d$ of $T$, $F^{(\bsig,\bp)}$ satisfies $F^{(\bsig,\bp)}(\bx)\in\kone^{\bsig}_{++}$ for every $\bx\in\kone^{\bsig}_{++}$. Note that this tensor has $n$ positive entries and $n^m-n$ zero entries.
 \end{ex}

 \subsection{$\bsig$-weak irreducibility}\label{wirrsec}
Lemmas \ref{characgraph} and \ref{characpos} imply that if $T$ is $\bsig$-weakly irreducible, then $T$ is $\bsig$-strictly nonnegative. Furthermore, Lemma \ref{characgraph} implies that $T$ is $\bsig$-weakly irreducible if and only if $\G^{\bsig}(T)$ is strongly connected.
The lemma below shows that, when $T$ is partially symmetric with respect to $\bsig$, $\G^{\bsig}(T)$ is undirected and, as for $\bsig$-strict nonnegativity,  $\bsig$-weak irreducibility is preserved by the shape partitions' partial order. % a result analogous to Lemma \ref{strinegincl} can be derived for $\bsig$-weak irreducibility. 
% We conclude by a lemma about the consequence of partial symmetry on the structure of $G^{\bsig}(T)$.
\begin{lem}\label{wirrimpl}
Let $\bsig=\{\sigma_i\}_{i=1}^{d}$, $\tilde \bsig=\{\tilde\sigma_i\}_{i=1}^{\tilde d}$ be \dimensionalpartition{}s of $T$. Then
\begin{enumerate}
\item  if $T$ is $\bsig$-symmetric $\G^{\bsig}(T)$ is undirected, and
\item  if $\bsig\sqsubseteq \tilde \bsig$, $T$ is $\tilde\bsig$-symmetric and $T$ is $\bsig$-weakly irreducible, then $T$ is $\tilde \bsig$-weakly irreducible. 
\end{enumerate}
 \end{lem}
 \begin{proof}
 Let $\bsig=\{\sigma_i\}_{i=1}^d$ and let $(k,l_k),(i,t_i)\in\I^{\bsig}$ be such that $\big((k,l_k),(i,t_i)\big)\in\E^{\bsig}(T)$. If $(k,l_k)= (i,t_i)$ then clearly $\big((i,t_i),(k,l_k)\big)\in\E^{\bsig}(T)$. If $(k,l_k)\neq (i,t_i)$, there exists $j_1,\ldots,j_m$ with $T_{j_1,\ldots,j_m}>0$, $j_{s_k}=l_k$ and $t_i\in\{j_a\mid a\in\sigma_i\}$. Let $s'_i\in\sigma_i$ be such that $t_i=j_{s'_i}$. As $T$ is partially symmetric with respect to $\sigma_i$, we have $T_{j'_1,\ldots,j'_m}=T_{j_1,\ldots,j_m}>0$ where $j'_{s_i}=j_{s'_i}$, $j'_{s'_i}=j_{s_i}$ and $j'_a = j_a$ otherwise. In particular, this implies that $\big((i,t_i),((k,l_k)\big)\in\E^{\bsig}(T)$ and thus $\G^{\bsig}(T)$ is undirected.\newline
 Now, assume that $T$ is $\bsig$-weakly irreducible and let $\tilde \bsig =\{\tilde\sigma_i\}_{i=1}^{\tilde d}$ be a \dimensionalpartition{} of $T$ such that $\bsig\sqsubseteq\tilde \bsig$. Let $\emptyset\neq\tilde V^1,\tilde V^2\subset \I^{\tilde \bsig}$ be such that $\tilde V^1\cap \tilde V^2 = \emptyset$ and $\tilde V^1 \cup \tilde V^2=\I^{\bsig}$. We show that there is an edge between $\tilde V^1$ and $\tilde V^2$ in order to conclude that $T$ is $\tilde \bsig$-weakly irreducible. As 
 $\bsig\sqsubseteq \tilde\bsig$, there exists $g\colon [d]\to [\tilde 
 d]$ such that $\sigma_i\subset \tilde\sigma_{g(i)}$ for all $i\in [d]$. For 
 $k=1,2$, $i\in [\tilde d]$ and $j\in[d]$ let $\tilde V^k_i=\{t_i\mid (i,t_i)\in 
 \tilde V^k\}$ and $V^k_j = \tilde V^k_{g(j)}$. 
 Furthermore, set $V^k = \cup_{j=1}^d \{j\}\times V_j^k$ for $k=1,2$. 
 Then $V^1,V^2$ forms a partitioning of $\I^{\bsig}$ into nonempty 
 disjoints subsets. As $\G^{\bsig}(T)$ is strongly connected, there exists 
 $(k,l_k)\in V^1$ and $(i,t_i)\in V^2$ such that 
 $\big((k,l_k),(i,t_i)\big)\in \E^{\bsig}(T)$. We claim that 
 $\big((g(k),l_k),(g(i),t_i)\big)\in \E^{\tilde\bsig}(T)$, as 
 $(g(k),l_k)\in\tilde V^1$ and $(g(i),t_i)\in\tilde V^2$, this will 
 conclude the proof. Let $\tilde s_{i}=\min\{a\mid a\in\tilde \sigma_i\}$ for $i\in[d]$. There exists $j_1,\ldots,j_m$ 
 such that $T_{j_1,\ldots,j_m}>0$, $j_{s_k}=l_k$ and either 
 $(k,l_k)=(i,t_i)$ and $t_i\in\{j_a \mid a\in\sigma_i\setminus\{s_i\}\}$ 
 or $(k,l_k)\neq (i,t_i)$ and $t_i\in\{j_a \mid a\in\sigma_i\}$. In either 
 cases, one can use $\tilde \bsig$-symmetry of $T$ and rearrange the 
 $j_1,\ldots,j_m$ into $j_1',\ldots,j_m'$ so that 
 $T_{j_1',\ldots,j_m'}=T_{j_1,\ldots,j_m}>0$, $j'_{\tilde s_{g(k)}}=j_{s_k}$, $j'_{s_k}= 
 j_{\tilde s_{g(k)}}$ and $j_a' = j_a$ for all 
 $a\in[m]\setminus\{s_k,\tilde s_{g(k)}\}$. In particular this implies 
 our claim and the proof is done.
 \end{proof}

Note that  the $\bsig$-symmetry assumption is essential in order to have property 2 of Lemma \ref{wirrimpl}. In fact, for instance, if $\bsig^1,\bsig^2,\bsig^3$ are defined as in \eqref{shapex}, then  the tensor of Example \ref{graphex} is $\bsig^{i}$-weakly irreducible for $i=1,3$ but is not $\bsig^{2}$-weakly irreducible. In fact, already for the case of ${3\times 3 \times 3}$ tensors, for any $\Omega\subset \{1,2,3\}$ one can find a tensor which is $\bsig^i$-weakly irreducible for $i\in\Omega$ and not $\bsig^i$-weakly irreducible for $i\in \{1,2,3\}\setminus \Omega$. We prove the latter statement in the following remark where we exhibit $3\times 3\times 3$ tensors with binary entries which are $\bsig^i$-weakly irreducible for $i\in\Omega$ and not $\bsig^i$-weakly irreducible for $i\in \{1,2,3\}\setminus \Omega$, for all $\Omega\subset \{1,2,3\}$. As $\{1,2,3\}$ has $8$ different subsets $\Omega$, for the sake of brevity, we simply list all entries of these tensors in the reverse lexicographic order as a binary string of length 27. So, for instance, the tensor $T$ of Example \ref{graphex} can be compactly written as

\begin{equation}\label{binarynotation}
T\equiv \!\!\!\!\overset{\overset{T_{1,1,1}}{\downarrow}}{0} \,   \overset{\overset{T_{2,1,1}}{\downarrow}}{0} \,    \overset{\overset{T_{3,1,1}}{\downarrow}}{0}   \, \overset{\overset{T_{1,2,1}}{\downarrow}}{0}   \, \overset{\overset{T_{2,2,1}}{\downarrow}}{1}   \,  \overset{\overset{T_{3,2,1}}{\downarrow}}{1} \,    \overset{\overset{T_{1,3,1}}{\downarrow}}{1} \!\!\!\!     0     0    
\overset{\overset{\cdots}{{\color{white}\downarrow}}}{0}     0     0     
\overset{\overset{\cdots}{{\color{white}\downarrow}}}{0}     1     0     
\overset{\overset{\cdots}{{\color{white}\downarrow}}}{0}     0     0   \!\!\!\!  \overset{\overset{T_{1,1,3}}{\downarrow}}{1}  \!\!\!\!   0     0     0     \overset{\overset{\cdots}{{\color{white}\downarrow}}}{0}    0     0     0      \!\!\!\! \overset{\overset{T_{3,3,3}}{\downarrow}}{0}
\end{equation}

 \begin{rmq}\label{sirrcex}
 Let $\bsig^1,\bsig^2,\bsig^3$ be as in \eqref{shapex}. For every $\epsilon_1,\epsilon_2,\epsilon_3\in\{0,1\}$ there exists a tensor $ T^{(\epsilon_1,\epsilon_2,\epsilon_3)}\in\{0,1\}^{3\times 3\times 3}$ such that for $i=1,2,3$, $T^{(\epsilon_1,\epsilon_2,\epsilon_3)}$ is $\bsig^i$-weakly irreducible if $\epsilon_i=1$ and $T^{(\epsilon_1,\epsilon_2,\epsilon_3)}$ is not $\bsig^i$-weakly irreducible if $\epsilon_i=0$. Precisely, with the notation introduced in \eqref{binarynotation} we have
 \begin{equation*}
 \begin{array}{l l l l}
\!\! T^{(0,0,0)}\! \equiv\! 0     0     0     0     0     0     0     0     0     0     0     0     0     0     0     0     0     0     0     0     0     0     0     0     0     0     0,&\!\!\!\!
T^{(1,0,0)}\!\equiv\! 0     1     1     1     0     0     1     0     0     0     0     0     0     0     0     0     0     0     0     0     0     0     0     0     0     0     0, \\
\!\! T^{(0,1,0)}\!\equiv\!  1     1     1     0     1     0     1     0     0     0     0     0     0     0     0     0     0     0     0     0     0     0     0     0     0     0     0,&\!\!\!\!
 T^{(1,1,0)}\!\equiv\! 1     1     1     1     0     0     1     0     0     0     0     0     0     0     0     0     0     0     0     0     0     0     0     0     0     0     0, \\
\!\! T^{(0,0,1)}\!\equiv\! 0     0     1     0     1     0     1     0     0     0     0     0     0     1     0     0     0     0     1     0     0     0     0     0     0     0     0,&\!\!\!\!
T^{(1,0,1)}\!\equiv\!  0     0     0     0     1     1     1     0     0     0     0     0     0     1     0     0     0     0     1     0     0     0     0     0     0     0     0, \\ 
\!\! T^{(0,1,1)}\!\equiv\!  0     0     1     0     1     0     1     0     0     0     1     0     0     0     0     0     0     0     1     0     0     0     0     0     0     0     0,&\!\!\!\! 
T^{(1,1,1)}\!\equiv\! 1     1     1     1     0     0     1     0     0     1     0     0     0     0     0     0     0     0     1     0     0     0     0     0     0     0     0.
 \end{array}
 \end{equation*}
All the tensors given above are  not $\bsig^i$-strongly irreducible,  for $i=1,2,3$.
\end{rmq}

 \subsection{$\bsig$-strong irreducibility}\label{irrsec}
 We characterize $\bsig$-strong irreducibility and discuss its connection with $F^{(\bsig,\bp)}$. 
In particular, we prove that $T$ is $\bsig$-strongly irreducible if and only if for every $\bx^0\in\kone^{\bsig}_{+,0}$, there exists an integer $N$ such that $\bx^{N}\in\kone^{\bsig}_{++}$ where $\bx^{k+1}=\bx^k+F^{(\bsig,\bp)}(\bx^k)$ for $k=0,1,\ldots,N$ Note that this is conceptually analogous to the well--known property of nonnegative matrices $Q \in \R^{n\times n}$ for which there exists an integer $N\leq n$ such that $(I+Q)^N$ is positive.   This property implies that every nonnegative $(\bsig,\bp)$-eigenvector of $T$ is strictly positive. Indeed, the following lemma holds: %we show that if $T$ is $\bsig$-strongly irreducible, then $F^{(\bsig,\bp)}$ satisfies the assumptions of the following lemma:
\begin{lem}\label{posvectirr}
  Suppose that for every $\bx^0\in\kone^{\bsig}_{+,0}\setminus\kone^{\bsig}_{++}$, there exists $N$ such that $\bx^{N}\in\kone^{\bsig}_{++}$ where $\bx^{k+1}=\bx^k+F^{(\bsig,\bp)}(\bx^k)$ for $k=0,1,\ldots$. Then, for every $(\bt,\bx)\in\R^d_{+}\times\kone^{\bsig}_{+,0}$ such that $F^{(\bsig,\bp)}(\bx)=\bt\krog\bx$, we have $\bx\in\kone^{\bsig}_{++}$. 
 \end{lem}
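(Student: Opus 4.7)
The plan is to argue by contradiction: suppose $F^{(\bsig,\bp)}(\bx)=\bt\krog\bx$ with $\bx\in\kone^{\bsig}_{+,0}$ but $\bx\notin\kone^{\bsig}_{++}$, so that $\bx\in\kone^{\bsig}_{+,0}\setminus\kone^{\bsig}_{++}$. Then the hypothesis applied to $\bx^0=\bx$ would produce some $N$ with $\bx^N\in\kone^{\bsig}_{++}$, and I will obtain a contradiction by showing that the support pattern of $\bx^k$ remains identical to that of $\bx$ for every $k$.

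The key observation is that each iterate $\bx^k$ is simply a componentwise rescaling of $\bx$ by positive scalars $\alpha^k_1,\ldots,\alpha^k_d$. Concretely, I would prove by induction that there exists $\bal^k\in\R^d_{++}$ with $\bx^k=\bal^k\krog\bx$. The base case $\bal^0=\ones$ is immediate. For the inductive step, using the multi-homogeneity identity $F^{(\bsig,\bp)}(\bal\krog\bx)=\bal^{A}\krog F^{(\bsig,\bp)}(\bx)$ with $A$ the homogeneity matrix from \eqref{defA}, together with the eigenequation $F^{(\bsig,\bp)}(\bx)=\bt\krog\bx$, one computes
\begin{equation*}
\bx^{k+1}_i \;=\; \alpha^k_i\,\bx_i + \big((\bal^k)^A\big)_i\,\theta_i\,\bx_i \;=\; \alpha^{k+1}_i\,\bx_i,
\end{equation*}
where $\alpha^{k+1}_i := \alpha^k_i+((\bal^k)^{A})_i\theta_i$. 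Since $\alpha^k_i>0$ and the added term is nonnegative, $\alpha^{k+1}_i>0$, completing the induction.

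Given the claim, the contradiction is immediate: because $\bx\notin\kone^{\bsig}_{++}$, there exist indices $i\in[d]$ and $j\in[n_i]$ with $(\bx_i)_j=0$, and the identity $\bx^k=\bal^k\krog\bx$ forces $(\bx^k_i)_j=\alpha^k_i\cdot 0=0$ for all $k$. Hence no iterate enters $\kone^{\bsig}_{++}$, contradicting the standing hypothesis on the sequence $(\bx^k)$, and therefore $\bx\in\kone^{\bsig}_{++}$.

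I do not anticipate any real obstacle here; the entire argument hinges on the multi-homogeneity relation and bookkeeping of positive scalars, and the only point that deserves a sentence of care is checking that the definition of $\krog$ and of $(\,\cdot\,)^A$ give the clean scalar recursion used above.
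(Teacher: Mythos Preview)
Your proposal is correct and follows essentially the same approach as the paper: both prove by induction that $\bx^k=\bal^k\krog\bx$ for some $\bal^k\in\R^d_{++}$ via the multi-homogeneity relation and the eigenequation, and then use this to conclude (the paper phrases the final step as ``$\bx^N\in\kone_{++}^{\bsig}$ and $\bal^N\in\R^d_{++}$ force $\bx\in\kone_{++}^{\bsig}$'', which is the contrapositive of your ``a zero entry of $\bx$ persists in every $\bx^k$'').
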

 \begin{proof}
 If $\bx\in\kone_{++}^{\bsig}$, there is nothing to prove so let us assume that $\bx\notin\kone_{++}^{\bsig}$. Set $\bx^0=\bx$ and let $N$ be such that $\bx^{N}\in\kone^{\bsig}_{++}$. Note that for every $k$ we have $\bx^k = \bdel^{(k)}\krog\bx$ where $\bdel^{(k)}\in\R^d_{++}$ is given by $\bdel^{(0)}=\ones$ and $\bdel^{(j+1)}=\bdel^{(j)}+(\bdel^{(j)})^A\circ\bt$ for all $j=0,1,\ldots$ In particular $\bdel^{(N)}\in\R^d_{++}$ and so $\bdel^{(N)}\krog\bx=\bx^{N}\in\kone^{\bsig}_{++}$ implies that $\bx\in\kone_{++}^{\bsig}$ which concludes the proof.
 \end{proof}

The lemma below gives several equivalent characterizations of $\bsig$-strong irreducibility:
 \begin{lem}\label{characirr}
 The following statements are equivalent:
 \begin{enumerate}[(i)]
 \item $T$ is $\bsig$-strongly irreducible.\label{defirrlem}
 \item For every $\emptyset \neq V\subset\I^{\bsig}$ such that $V_i=\{l_i\in[n_i]\mid (i,l_i)\in V\}\neq [n_i]$ for all $i\in [d]$,  the following holds: There exists $k\in[d]$ and $j_1,\ldots,j_m$ such that $T_{j_1,\ldots,j_m}>0$, $j_{s_k}\in V_k$, $j_{t}\in [n_k]\setminus V_k$, $t\in\sigma_k\setminus\{s_k\}$ and $j_t\in [n_i]\setminus V_{i}$, $t\in \sigma_i, i \in[d]\setminus\{k\}$.\label{lemsigirr}
  \item There exists $N\leq n_1+\ldots+n_d-d$ such that for all $\bj=(j_1,\ldots,j_d)\in [n_1]\times\ldots\times [n_d]$, it holds $\be_{\bj}^{N} \in\kone_{++}^{\bsig}$, where $\be_{\bj}^{k+1}=\be_{\bj}^k+G^{(\bsig)}(\be_{\bj}^k)$, $G^{(\bsig)}\colon\kone_{+}^{\bsig}\to \kone_{+}^{\bsig}$ is given by $G^{(\bsig)}(x)=(\T_{s_1}(x^{[\bsig]}),\ldots,\T_{s_d}(x^{[\bsig]}))$
  and $(\be_{\bj}^{0})_{k,l_k}=1$ if $l_k=j_k$, $(\be_{\bj}^{0})_{k,l_k}=0$ else.\label{characirrG} 
 \item  For every $\bx^0\in\kone_{+,0}^{\bsig}$, there exists a positive integer $N_{\bx}$ such that $\bx^{N_{\bx}}\in\kone^{\bsig}_{++}$ where $\bx^{k+1}=\bx^k+F^{(\bsig,\bp)}(\bx^k)$ and $k = 0,1,2\ldots$\label{FIRR}
\item $Q\big(F^{(\bsig,\bp)}(\bz)\big)\not\supset Q(\bz)$ for every $\bz\in\kone_{+,0}^{\bsig}\sauf\kone_{++}^{\bsig}$, where, for every $\bx\in\kone_{+}^{\bsig}$, $Q(\bx)=\{(i,j_i)\in\I^{\bsig} \ |\ \ x_{i,j_i}=0\}$.\label{characirrQ}
 \end{enumerate}
 \end{lem}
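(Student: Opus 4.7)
My plan is to establish the five equivalences via the cyclic chain of implications $\eqref{defirrlem}\Rightarrow\eqref{characirrQ}\Rightarrow\eqref{FIRR}\Rightarrow\eqref{characirrG}\Rightarrow\eqref{lemsigirr}\Rightarrow\eqref{defirrlem}$. The engine of the whole argument is the observation that, because $p_i'-1>0$, one has $F^{(\bsig,\bp)}_{i,j_i}(\bz)>0$ if and only if $\T_{i,j_i}(\bz^{[\bsig]})>0$ for every $\bz\in\kone_+^{\bsig}$. In particular the zero pattern (equivalently, the set $Q$) of $F^{(\bsig,\bp)}(\bz)$ depends only on $\supp(\bz)$, so all five conditions can be recast as statements about how supports propagate under the tensor contractions.

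The equivalence $\eqref{defirrlem}\Leftrightarrow\eqref{characirrQ}$ is essentially a rewriting: unfolding $Q(F^{(\bsig,\bp)}(\bz))\not\supset Q(\bz)$ yields some $(k,l_k)$ with $z_{k,l_k}=0$ and $F^{(\bsig,\bp)}_{k,l_k}(\bz)>0$, which by the key observation coincides with the defining condition of $\bsig$-strong irreducibility applied to $\bz\in\kone_{+,0}^{\bsig}\setminus\kone_{++}^{\bsig}$. For $\eqref{characirrQ}\Rightarrow\eqref{FIRR}$ I will use the elementary identity $Q(\bx^{k+1})=Q(\bx^k)\cap Q(F^{(\bsig,\bp)}(\bx^k))$ together with $\bx^k\geq\bx^0\in\kone_{+,0}^{\bsig}$, which keeps each $\bx^k$ in $\kone_{+,0}^{\bsig}$. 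Condition \eqref{characirrQ} then forces $Q(\bx^{k+1})\subsetneq Q(\bx^k)$ as long as $\bx^k\notin\kone_{++}^{\bsig}$, and since $\I^{\bsig}$ is finite the strict decrease must terminate at some $\bx^{N_{\bx}}\in\kone_{++}^{\bsig}$. Then $\eqref{FIRR}\Rightarrow\eqref{characirrG}$ is a specialization: the support trajectories of the $F^{(\bsig,\bp)}$-iteration and of the $\T$-iteration coincide whenever the initial vectors share the same support, so instantiating \eqref{FIRR} with $\bx^0=\be_{\bj}^0\in\kone_{+,0}^{\bsig}$ yields \eqref{characirrG}, and the uniform bound $N\leq n_1+\ldots+n_d-d$ follows from $|Q(\be_{\bj}^0)|=n_1+\ldots+n_d-d$ combined with the same strict-decrease counting.

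The two remaining arrows close the loop through the combinatorial condition \eqref{lemsigirr}. For $\eqref{characirrG}\Rightarrow\eqref{lemsigirr}$ I will argue by contrapositive: a witness $V$ to the failure of \eqref{lemsigirr} is precisely a proper forward-invariant subset for the support dynamics, in the sense that if $\supp(\bz)\subseteq \I^{\bsig}\setminus V$ then the support of $\T(\bz^{[\bsig]})$ is also contained in $\I^{\bsig}\setminus V$; choosing any $\bj$ with $j_i\in[n_i]\setminus V_i$ (possible since $V_i\neq[n_i]$) produces an orbit $(\be_{\bj}^k)$ whose support never escapes $\I^{\bsig}\setminus V\subsetneq\I^{\bsig}$, contradicting \eqref{characirrG}. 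Conversely, for $\eqref{lemsigirr}\Rightarrow\eqref{defirrlem}$: given $\bx\in\kone_{+,0}^{\bsig}\setminus\kone_{++}^{\bsig}$, set $V=Q(\bx)$, which is nonempty with $V_i\neq[n_i]$ for every $i$; applying \eqref{lemsigirr} yields $k$ and a tuple $j_1,\dots,j_m$ with $T_{j_1,\dots,j_m}>0$, $(k,j_{s_k})\in V$, and every remaining $j_t$ in $\supp(\bx)$, so that the corresponding summand of $\T_{k,j_{s_k}}(\bx^{[\bsig]})$ is strictly positive and certifies $\bsig$-strong irreducibility.

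The main obstacle will be the careful bookkeeping in the translation between \eqref{lemsigirr} and the support-propagation viewpoint. One has to separate the distinguished index $s_k\in\sigma_k$ singled out in $\T_{s_k,l_k}$ from the remaining positions $t\in\sigma_k\setminus\{s_k\}$, so that the two distinct clauses ``$j_t\in[n_k]\setminus V_k$ for $t\in\sigma_k\setminus\{s_k\}$'' and ``$j_t\in[n_i]\setminus V_i$ for $t\in\sigma_i$, $i\neq k$'' correspond exactly to the condition that a nonzero summand appears in $\T_{k,l_k}(\bw^{[\bsig]})$ when $\bw\in\kone_+^{\bsig}$ has support $\I^{\bsig}\setminus V$. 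The self-loop clause in the definition of $\G^{\bsig}(T)$ requires the same type of care, as does the verification that the support of the $\T$-iterates is a well-defined function of the initial support alone.
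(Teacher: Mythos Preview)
Your proof is correct and rests on the same core ideas as the paper's---the key observation that $F^{(\bsig,\bp)}_{i,j_i}(\bz)>0\iff\T_{i,j_i}(\bz^{[\bsig]})>0$, the strict decrease of $|Q(\bx^k)|$, and the equivalence between failure of \eqref{lemsigirr} and the existence of a proper forward-invariant support set. The organisation, however, differs: the paper proves $(\text{i})\Leftrightarrow(\text{v})$ directly and then runs the cycle $(\text{ii})\Rightarrow(\text{iii})\Rightarrow(\text{iv})\Rightarrow(\text{v})\Rightarrow(\text{ii})$, whereas you traverse it in the opposite direction, $(\text{v})\Rightarrow(\text{iv})\Rightarrow(\text{iii})\Rightarrow(\text{ii})\Rightarrow(\text{i})$. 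The practical difference shows up at the link between \eqref{characirrG} and \eqref{FIRR}: the paper's $(\text{iii})\Rightarrow(\text{iv})$ requires an inductive domination argument with auxiliary scalings $\bdel^{(k)}$ (to pass from the $\T$-iteration to the nonlinear $F^{(\bsig,\bp)}$-iteration), while your $(\text{iv})\Rightarrow(\text{iii})$ bypasses this entirely via the observation that the two iterations have identical support trajectories. One small point of rigor: when you invoke ``the same strict-decrease counting'' in $(\text{iv})\Rightarrow(\text{iii})$, you should justify the strict decrease from \eqref{FIRR} alone rather than from \eqref{characirrQ}; this follows immediately since $Q(\bx^{k+1})=Q(\bx^k)$ would force $Q(\bx^{k+l})=Q(\bx^k)$ for all $l\geq 0$ (as $Q(F^{(\bsig,\bp)}(\cdot))$ depends only on the support), contradicting \eqref{FIRR}.
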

% Note that \eqref{characirrQ} implies that $N\leq n_1+\ldots+n_d-d$ in \eqref{characirrG} and \eqref{FIRR}.
 \begin{proof}
 Note that the equivalence \eqref{defirrlem}$\Leftrightarrow $\eqref{characirrQ} is direct. We show the other implications by a circular argument, i.e. \eqref{lemsigirr}$\Rightarrow\ldots\Rightarrow$\eqref{characirrQ}$\Rightarrow$\eqref{lemsigirr}.\newline
\eqref{lemsigirr} $\Rightarrow$ \eqref{characirrG}: Let $\bz\in\kone^{\bsig}_{+,0}\setminus\kone^{\bsig}_{++}$ and let $Q(\cdot)$ be defined as in \eqref{characirrQ}. Let $V=Q(\bz)$, then  $V\neq \emptyset$ and $V_i=\{j_i\mid z_{i,j_i}=0\}\neq [n_i]$ for all $i\in[d]$. Now, there exists $k\in[d]$ and $j_1,\ldots,j_m$ such that $T_{j_1,\ldots,j_m}>0$, $j_{s_k}\in V_k$, $j_{t}\in [n_k]\setminus V_k$, $t\in\sigma_k\setminus\{s_k\}$ and $j_t\in [n_i]\setminus V_{i}$, $t\in \sigma_i, i \in[d]\setminus\{k\}$. It follows that
\begin{equation*}
G_{k,j_{s_k}}^{(\bsig)}(z)=\T_{s_k,j_{s_k}}(\bz^{[\bsig]})\geq T_{j_1,\ldots,j_m}\Big(\prod_{t\in\sigma_k, t\neq s_k}z_{k,j_t}\Big)\prod_{i=1, i \neq k}^d\prod_{t\in\sigma_i}z_{i,j_t}>0,
\end{equation*}
and so $(k,j_{s_k})\in Q(\bz)\setminus Q(G^{(\bsig)}(\bz^{[\bsig]}))$. This shows that $|Q(\bz)|>|Q(\bz+G^{(\bsig)}(\bz^{[\bsig]}))|$ for all $\bz\in\kone^{\bsig}_{+,0}\setminus\kone^{\bsig}_{++}$. It follows that for all $\bj\in[n_1]\times\ldots\times [n_d]$, we have $|Q(\be^{k}_{\bj})| > |Q(\be^{k+1}_{\bj})|$ for all $k$ such that $|Q(\be^{k}_{\bj})|>0$. Finally, note that if $\be^{k}\in\kone^{\bsig}_{++}$, then $\be^{k+l}\in\kone^{\bsig}_{++}$ for all $l\geq 0$ and $|Q(\be^{0}_{\bj})|\leq n_1+\ldots+n_d-d$, so that $|Q(\be^{n_1+\ldots+n_d-d}_{\bj})|=0$ for all $\bj\in[n_1]\times\ldots\times [n_d]$ which concludes this part.\newline
\eqref{characirrG}$\Rightarrow$\eqref{FIRR}: Let $\bx^{0}\in\kone^{\bsig}_{+,0}\setminus\kone^{\bsig}_{++}$, then there exists $\bj\in[n_1]\times\ldots\times [n_d]$ and $\bdel^{(0)}\in \R^d_{++}$ such that $\bdel^{(0)}\krog \be^{0}_{\bj}\leq \bx^{0}$. We prove by induction that for every $k$ there exists $\bdel^{(k)}\in\R^d_{++}$ such that $\bdel^{(k)}\krog \be^{k}_{\bj}\leq \bx^{k}$. The case $k=0$ is discussed above, so suppose it is true for a $k\geq 0$ and let $\bdel^{(k)}\in\R^d_{++}$ be such that $\bdel^{(k)}\krog \be^{k}_{\bj}\leq \bx^{k}$. Set
\begin{equation*}
\alpha_i =\min\big\{\Big(G^{(\bsig)}_{i,j_i}\big((\be^{k}_{\bj})^{[\bsig]}\big)\Big)^{p_i'-2}\ \big|\ j_i\in[n_i]\text{ and }G^{(\bsig)}_{i,j_i}\big((\be^{k}_{\bj})^{[\bsig]}\big)>0\big\}\qquad \forall i \in[d],
\end{equation*}
and let $\delta^{(k+1)}_i=\min\{\delta_i^{(k)},((\bdel^{(k)})^A)_i\alpha_i \}>0$ for $i\in[d]$. Then, as $F^{(\bsig,\bp)}$ is order-preserving, we have
\begin{align*}
\bdel^{(k+1)}\krog \be^{k+1}_{\bj} &\lek \bdel^{(k)}\krog \be^{k}_{\bj}+\big(\bal \circ (\bdel^{(k)})^A\big) \krog G^{(\bsig)}\big((\be^{k}_{\bj})^{[\bsig]}\big)\\
&\lek \bdel^{(k)}\krog \be^{k}_{\bj}+F^{(\bsig,\bp)}(\bdel^{(k)}\krog\be^k_{\bj})\lek \bx^k+ F^{(\bsig,\bp)}(\bx^k)=\bx^{k+1}.
\end{align*}
This concludes our induction proof. In particular, we have $0\lekkk\bdel^{(N)}\krog \be^{N}_{\bj}\leq \bx^{N}$ for all $N\geq n_1+\ldots+n_d$ which shows the claim.\newline
 \eqref{FIRR}$\Rightarrow$\eqref{characirrQ}: We show that if \eqref{characirrQ} does not hold, then \eqref{FIRR} does not hold either. Note that for $\bx,\by\in\kone^{\bsig}_{+,0}$, if $Q(\bx)=Q(\by)$, then there exists $\bal,\bbe\in\R^d_{++}$ such that $\bal\krog\by \lek \bx \lek \bbe\krog\by$ which implies that $Q\big(F^{(\bsig,\bp)}(\bx)\big)=Q\big(F^{(\bsig,\bp)}(\by)\big)$ as we then have $\bal^A\krog F^{(\bsig,\bp)}(\by) \lek F^{(\bsig,\bp)}(\bx) \lek \bbe^A\krog F^{(\bsig,\bp)}(\by)$. Now, suppose that there exists $\bx^0\in\kone_{+,0}$ with $\emptyset \neq Q(\bx^0)\subset Q(F^{(\bsig,\bp)}(\bx^0))$. Then, we have
  \begin{equation*}
  Q(\bx^1)=Q\big(\bx^0 + F^{(\bsig,\bp)}(\bx^0)\big)=Q(\bx^0)\cap Q\big(F^{(\bsig,\bp)}(\bx^0)\big) =Q(\bx^0).
  \end{equation*}
  Using induction and the arguments above, if $Q\big(\bx^{k}\big)=Q(\bx^0)$ for $k>0$, then 
  \begin{equation*}
  Q\big(\bx^{k+1}\big)=Q\big(F^{(\bsig,\bp)}(\bx^k)\big)\cap Q\big(\bx^k\big)=Q\big(F^{(\bsig,\bp)}(\bx^0)\big)\cap Q\big(\bx^k\big)=Q(\bx^0). %=Q(\bx\bx^k)\cap Q\big(F(\bx)\big)=Q(\bx).
  \end{equation*}
  Hence, $Q\big(\bx^{k}\big)\neq \emptyset$ for every $k>0$ and thus \eqref{FIRR} can not be satisfied.\newline
 \eqref{characirrQ}$\Rightarrow$\eqref{lemsigirr}: Let $\emptyset \neq V \subset \I^{\bsig}$ be such that $V_i=\{j_i\mid (i,j_i)\in V\}\neq [n_i]$ for all $i$. Define $\bz\in\kone_{+}$ as $z_{i,j_i}=0 $ if $(i,j_i)\in V$ and $z_{i,j_i}=1 $ else. Then $\bz\in \kone^{\bsig}_{+,0}$ as $V_i\neq [n_i]$ for all $i$, and $\bz\notin \kone^{\bsig}_{++}$ as $V\neq \emptyset$. 
 Now, we have $Q(F^{(\bsig,\bp)}(\bz))\not\subset Q(\bz)$ and so there exists $(k,l_k)\in \I^{\bsig}$ such that $F^{(\bsig,\bp)}_{k,l_k}(\bz)>0$ and $x_{k,l_k}=0$. $F^{(\bsig,\bp)}_{k,l_k}(\bz)>0$ implies the existence of $j_1,\ldots,j_m$ such that 
 $$T_{j_1,\ldots,j_m}\Big(\prod_{t\in\sigma_k, t\neq s_k}z_{k,j_t}\Big)\prod_{i=1, i \neq k}^d\prod_{t\in\sigma_i}z_{i,j_t}>0.$$
 Hence, we have $T_{j_1,\ldots,j_m}>0$ and $z_{i,j_t}>0$ for all $t\in \sigma_i, i \neq k$ and $z_{k,j_t}>0$, $t\in\sigma_k\setminus\{s_k\}$. As $(k,l_k)\in V$ and $z_{i,j_t}>0$ implies $(i,j_t)\notin V$, this concludes the proof.
 \end{proof}
 
Let us point out that the second characterization in the above lemma reduces to the definition of irreducibility introduced for the cases $d=1$ and $d=m$ in \cite{Chang} and \cite{Fried}, respectively. Furthermore, the third characterization is particularly relevant as it allows to introduce a simple algorithm for checking $\bsig$-irreducibility. In particular,  observe that, when $d=1$, such characterization  reduces to Theorem 5.2 of \cite{Yang2}.

Finally, with the next lemma we prove that, as for $\bsig$-strict nonnegativity and $\bsig$-weak irreducibility, for $\bsig$-symmetric and nonnegative tensors, $\bsig$-strong irreducibility is preserved by the partial order on shape partitions. %prove the equivalent of Lemma \ref{wirrimpl} and Remark \ref{sirrcex} for strong irreducibility.

 \begin{lem}\label{sirrimpl}
 	Let $\bsig$, $\tilde \bsig$ be \dimensionalpartition{}s of $T$. If $T$ is $\tilde \bsig$-symmetric, $\bsig$-strongly irreducible and $\bsig\sqsubset \tilde \bsig$, then $T$ is $\tilde\bsig$-strongly irreducible.
 %If $T$ is $\bsig$-strongly irreducible and $\bsig$-symmetric, then for any \dimensionalpartition{} $\tilde \bsig$ of $T$ such that $\bsig\sqsubseteq \tilde \bsig$, $T$ is $\tilde \bsig$-strongly irreducible. 
 \end{lem}
 \begin{proof}
By Lemma \ref{characirr}, we may assume without loss of generality that $p_i=\tilde p_j = 2$ for all $i\in[d], j \in [\tilde d]$.  Now, there exists $\bar \bsig$ such that $\bx^{[\bar \bsig]}\in\kone_{+}^{\bsig}$ for all $\bx\in\kone_{+}^{\tilde \bsig}$. Let $\bx^0\in\kone_{+,0}^{\tilde\bsig}\setminus\kone_{++}^{\tilde\bsig}$ and, for $k\in\N$, define $\bx^{k+1}=\bx^k+F^{(\tilde\bsig,\bp)}(\bx^k)$. We show that $\bx^K\in\kone_{++}^{\tilde \bsig}$ for some $K>0$ so that the claim follows from Lemma \ref{characirr}. Define $\bz^0=(\bx^0)^{[\bar\bsig]}\in\kone_{+,0}^{\bsig}$ and $\bz^{k+1}=\bz^k+F^{(\bsig,\bp)}(\bz^k)$ for $k\in\N$. As $T$ is $\tilde\bsig$-symmetric, we have $\bz^k=(\bx^k)^{[\bar\bsig]}$ for all $k$. Lemma \ref{characirr} implies the existence of $K>0$ such that $\bz^K\in\kone_{++}^{\bsig}$ and thus $\bx^K\in\kone_{++}^{\tilde\bsig}$ which conclude the proof.
\end{proof}
As for the case of $\bsig$-weak irreducibility, we show in the following remark that $\bsig$-symmetry is an essential requirement for the  above lemma.
\begin{rmq}\label{stirrcex}
Let $\bsig^1,\bsig^2,\bsig^3$ be as in \eqref{shapex}. For every $\epsilon_1,\epsilon_2,\epsilon_3\in\{0,1\}$, there exists a tensor $T^{(\epsilon_1,\epsilon_2,\epsilon_3)}\in\{0,1\}^{3\times 3\times 3}$ such that for $i=1,2,3$, $T^{(\epsilon_1,\epsilon_2,\epsilon_3)}$ is $\bsig^i$-strongly irreducible if $\epsilon_i=1$ and $T^{(\epsilon_1,\epsilon_2,\epsilon_3)}$ is not $\bsig^i$-strongly irreducible if $\epsilon_i=0$. Precisely, with the notation of \eqref{binarynotation} we have
\begin{equation*}
\begin{array}{l l}
\!\! T^{(0,0,0)}\!\equiv\! 1     1     1     1     0     0     1     0     0     1     0     0     0     0     0     0     0     0     1     0     0     0     0     0     0     0     0,&\!\!\!\!
T^{(1,0,0)}\!\equiv\! 1     1     1     0     0     0     0     0     0     0     0     0     1     0     0     0     0     0     0     0     0     0     0     0     1     0     0,\\ 
\!\! T^{(0,1,0)}\!\equiv\! 1     0     0     0     1     0     1     0     0     1     1     1     0     0     0     0     0     0     1     1     1     0     0     0     0     0     0,&\!\!\!\!
T^{(1,1,0)}\!\equiv\! 1     1     1     1     0     0     1     0     0     1     0     0     1     0     0     0     0     0     1     0     0     0     0     0     1     0     0,\\ 
\!\! T^{(0,0,1)}\!\equiv\! 1     1     0     0     1     1     0     1     1     1     0     0     0     0     0     0     0     0     1     0     0     0     0     0     0     0     0,&\!\!\!\!
T^{(1,0,1)}\!\equiv\! 0     1     1     1     1     1     1     1     1     0     0     0     1     0     0     0     0     0     0     0     0     0     0     0     1     0     0,\\ 
\!\! T^{(0,1,1)}\!\equiv\! 1     0     0     1     0     0     1     0     0     1     1     1     0     0     0     0     0     0     1     1     1     0     0     0     0     0     0,&\!\!\!\!
T^{(1,1,1)}\!\equiv\! 1     1     0     0     1     1     0     1     1     1     0     0     1     0     0     0     0     0     1     0     0     0     0     0     1     0     0.
\end{array}
\end{equation*}
All the tensors given above are $\bsig^i$-weakly irreducible, for $i=1,2,3$. %  have the properties that they are . 

Furthermore, in the case $d=2$, it follows from Theorem 2.4 in \cite{Qi_rect_1} that if $T$ is irreducible in the sense of Definition 1 in \cite{Chang_rect_eig}, then $T$ is $\bsig$-strongly irreducible. However, the converse is not true. For example,  the tensor $T^{(0,1,0)}$ of Remark \ref{stirrcex} is $\bsig^2$-strongly irreducible, but, with $\bx=\big((1,0,0)^\top,(0,1,0)^\top\big)$, we have $\T(\bx^{[\bsig^2]})=((0,0,0)^\top,(1,0,0)^\top)$ and so, by Lemma 2 in \cite{Chang_rect_eig}, $T$ can not be irreducible in the sense of Definition 1 of \cite{Chang_rect_eig}.
\end{rmq}
%
%
%
%
%
%
% We are now almost ready to prove the first of our main theorems.
\subsection{Proof of Theorem \ref{irrthm}}
Note that points  \eqref{irrthm3} and \eqref{irrthm4} of Theorem \ref{irrthm} follow immediately from Lemmas \ref{wirrimpl} and \ref{sirrimpl}, respectively. Thus, we only need to prove that $\bsig$-strong irreducibility implies $\bsig$-weak irreducibility and that $\bsig$-weak irreducibility implies $\bsig$-strict nonnegativity. This is addressed by the following lemma. 

For completeness, let us remark that in the particular cases $d=1$ and $d=m$, it is known that (strong) irreducibility implies weak irreducibility (see Lemma 3.1 in \cite{Fried}). Furthermore, still for the particular cases $d=1$ and $d=m$, it was proved in Proposition 8, (b) of \cite{us} and Corollary 2.1. of \cite{Hu2014} that weak irreducibility implies strict nonnegativity. All these results are particular cases of the following:
\begin{lem}\label{irrsowirr}
If $T$ is $\bsig$-strongly irreducible, then it is $\bsig$-weakly irreducible. If $T$ is $\bsig$-weakly irreducible, then it is $\bsig$-strictly nonnegative.
\end{lem}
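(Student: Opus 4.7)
The statement splits into two implications which I would handle separately.

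For $\bsig$-weak irreducibility $\Rightarrow$ $\bsig$-strict nonnegativity: by Lemma \ref{characgraph}, the directed graph whose edges record the positive entries of $M$ coincides with $\G^{\bsig}(T)$, so weak irreducibility is exactly strong connectivity of $\G^{\bsig}(T)$. If $|\I^{\bsig}|=1$, $M$ is a positive scalar and the claim is immediate. Otherwise, strong connectivity forces every node to have at least one outgoing edge, i.e.\ every row of $M$ contains a positive entry, which is precisely $\bsig$-strict nonnegativity.

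For $\bsig$-strong irreducibility $\Rightarrow$ $\bsig$-weak irreducibility, I would argue by contrapositive. Suppose $\G^{\bsig}(T)$ is not strongly connected and fix a nonempty proper $W\subsetneq\I^{\bsig}$ with no edge from $W$ to $W^c$. The plan is to build a vector $\bx\in\kone_{+,0}^{\bsig}\setminus\kone_{++}^{\bsig}$ whose zero pattern is tailored to $W$, apply Definition \ref{defirr} to extract some $(k,l_k)$ with $x_{k,l_k}=0$ and $\T_{k,l_k}(\bx^{[\bsig]})>0$, and read off from the witnessing nonzero entry $T_{j_1,\ldots,j_m}$ a forbidden edge from $W$ to $W^c$.

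The construction of $\bx$ case-splits on $S=\{i\in[d]:W_i=[n_i]\}$. If $S=\emptyset$, take $\bx=\mathbf{1}_{W^c}$; then $(k,l_k)\in W$, and since $x_{i(t),j_t}>0$ for $t\neq s_k$ forces $(i(t),j_t)\in W^c$, picking any $i\neq k$ (or, when $d=1$, any $a\in\sigma_1\setminus\{s_1\}$) produces an edge $(k,l_k)\to(i,j_a)$ into $W^c$. If $W=\bigcup_{i\in S}\{i\}\times[n_i]$ is a pure union of full slices, the required edge appears directly in any nonzero tensor entry by pairing a source in an $S$-slice with a target in a non-$S$-slice, so the only obstruction is $T\equiv0$, which contradicts strong irreducibility outside the trivial case $\kone_{+,0}^{\bsig}=\kone_{++}^{\bsig}$. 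In the mixed configuration, I would take $\bx_i=\ones$ for $i\in S$ and $\bx_i=\mathbf{1}_{W^c_i}$ for $i\notin S$; the same edge-extraction succeeds whenever the target $(i,j_a)$ can be chosen with $i\notin S$ and $a\neq s_k$.

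The main obstacle is the corner case $S=[d]\setminus\{k\}$ with $|\sigma_k|=1$ and $W_k$ proper nonempty, where the preceding construction forces every extracted edge to remain inside $W$. To resolve it I would use the reflected vector $\bx_k=\mathbf{1}_{W_k}$, $\bx_i=\ones$ for $i\in S$, for which $Q(\bx)=\{k\}\times W_k^c$. Definition \ref{defirr} then produces a nonzero $T_{j_1,\ldots,j_m}$ with $j_{s_k}\in W_k^c$; pairing this entry with any source $(k'',j_{s_{k''}})$ for $k''\in S$ (which lies in $W$ because $k''\in S$ and is distinct from the target $(k,j_{s_k})\in W^c$ because $k''\neq k$) supplies the missing edge $W\to W^c$, completing the contradiction.
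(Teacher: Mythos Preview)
Your argument is correct, and for the implication weak $\Rightarrow$ strict nonnegativity it coincides with the paper's (which simply invokes Lemmas \ref{characgraph} and \ref{characpos}; you spell out the graph-theoretic content).

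For strong $\Rightarrow$ weak irreducibility you take a genuinely different route. The paper first proves, as a separate preliminary step, that strong irreducibility forces strict nonnegativity (by zeroing a single coordinate of a positive vector and invoking Lemma \ref{characirr}\eqref{characirrQ}); it then uses this fact to dispatch the entire ``full slice'' situation at once: if some $V^1_{\bar k}=[n_{\bar k}]$ and no edge leaves $V^1$, then any $(i,t_i)\in V^2$ would witness a zero row of $M$, contradicting strict nonnegativity. You instead bypass the preliminary step and split the full-slice situation into three subcases (pure union, mixed, and the corner case with the reflected vector), extracting the forbidden edge by hand each time. Both approaches are valid; the paper's is shorter because the single observation ``strong $\Rightarrow$ strict nonnegative'' absorbs all your subcases simultaneously, whereas your version is more self-contained (it also covers $d=1$ without citing external references, which the paper defers to \cite{Fried,Hu2014}).

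One shared caveat: in the degenerate situation $n_i=1$ for all $i$ (your ``trivial case'' $\kone_{+,0}^{\bsig}=\kone_{++}^{\bsig}$) the definition of strong irreducibility is vacuous, so the zero tensor would be strongly but not weakly irreducible. Neither your argument nor the paper's closes this loophole; it is tacitly excluded.
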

\begin{proof}
The case $d=1$ follows from Corollary 2.1. \cite{Hu2014} and Lemma 3.1 \cite{Fried}. Now, suppose $d>1$. Let $M$ be as in definition \eqref{defirr}. If $M$ is irreducible, then $M$ has at least one nonzero entry per row and thus if $T$ is $\bsig$-weakly irreducible, then $T$ is $\bsig$-strict nonnegative.
%Clearly, if $\G^{\bsig}(T)$ is strongly connected, then Lemmas \ref{characgraph} and \ref{characpos} imply that $T$ is $\bsig$-strictly nonnegative. 
Now, suppose that $T$ is $\bsig$-strongly irreducible and let us show that $T$ is $\bsig$-weakly irreducible. To this end, we first show that $T$ is $\bsig$-strictly nonnegative. Suppose by 
contradiction that it is not the case. By Lemma \ref{characpos}, there exists $\bx\in\kone_{++}^{\bsig}$ and 
$(k,l_k)\in\I^{\bsig}$ such that $F^{(\bsig,\bp)}_{k,l_k}(\bx)=0$. Let $\bz\in\kone_{+,0}^{\bsig}$ be defined as 
$z_{i,j_i}=x_{i,j_i}$ for all $(i,j_i)\in\I^{\bsig}\setminus\{(k,l_k)\}$ and $z_{k,l_k}=0$. Then, as $\bz\lek \bx$, we 
have $F_{k,l_k}^{(\bsig,\bp)}(\bz)\leq F^{(\bsig,\bp)}_{k,l_k}(\bx)=0$ which contradicts Lemma \ref{characirr}, 
\eqref{characirrQ}.

Now, to show that $T$ is $\bsig$-weakly irreducible, we show that for every nonempty subsets $V^1,V^2 \subset 
\I^{\bsig}$ with $V^1\cap V^2 = \emptyset$ and $V^1 \cup V^2 = \I^{\bsig}$ there exists $(k,l_k)\in V^1$ and 
$(i,t_i)\in V^2$ such that $\big((k,l_k),(i,t_i)\big)\in \E^{\bsig}(T)$. So let $V^1, V^2$ be such a partition of 
$\I^{\bsig}$ and set $V^j_i = \{t_i\in [n_i]\mid (i,t_i)\in V^j\}$ for $i\in[d]$, $j=1,2$. First, assume that 
$V^1_i\neq [n_i]$ for all $i\in [d]$. Then, as $T$ is $\bsig$-strongly irreducible, by Lemma \ref{characirr}  \eqref{lemsigirr}, there exists $k\in[d]$ and 
$j_1,\ldots,j_m$ such that $T_{j_1,\ldots,j_m}>0$, $j_{s_k}\in V^1_k$, $j_{t}\in V^2_k$, 
$t\in\sigma_k\setminus\{s_k\}$ and $j_t\in V^2_{i}$, $t\in \sigma_i, i \in[d]\setminus\{k\}$. It follows that 
$\big((k,j_{s_k}),(i,j_{s_i})\big)\in\E^{\bsig}(T)$ for all $i\neq k$ and we are done. Now, suppose that there exists $\bar k\in [d]$ 
such that $V^1_{\bar k}= [n_{\bar k}]$. We claim that if there is no edge between $V^1$ and $V^2$ in 
$\G^{\bsig}(T)$, then $T$ is not $\bsig$-strictly nonnegative which contradicts our previous argument. Indeed, 
suppose that $\big((k,l_k),(i,t_i)\big)\notin \E^{\bsig}(T)$ for all $(k,l_k)\in V^1$ and $(i,t_i)\in V^2$. Let 
$(i,t_i)\in V^2$. Note that $i\neq \bar k$ as $V^{1}_{\bar k}=[n_{\bar k}]$. Furthermore, we have 
$T_{j_1,\ldots,j_m}=0$ for all $j_1,\ldots,j_m$ such that $j_{s_i}=t_i$ and $j_{s_{{\bar k}}}\in [n_{\bar k}]$. By 
Lemma \ref{characpos}, \eqref{pos3}, this implies that $T$ is not $\bsig$-strictly nonnegative, a contradiction. 
Thus, there exists $(k,l_k)\in V^1$ and $(i,t_i)\in V^2$ such that $\big((k,l_k),(i,t_i)\big)\in \E^{\bsig}(T)$ and as 
this is true for every partition of $\I^{\bsig}$, it follows that $\G^{\bsig}(T)$ is connected.
\end{proof}

We finally have all the tools for the proof of Theorem \ref{irrthm}, which is now a simple consequence of what have been discussed so far.
\begin{proof}[Proof of Theorem \ref{irrthm}]
\eqref{irrthm0}, \eqref{irrthm1}, \eqref{irrthm2} follow from Lemma \ref{irrsowirr} and \eqref{irrthm3}, \eqref{irrthm4} follow from Lemmas \ref{wirrimpl} and \ref{sirrimpl}, respectively.
\end{proof}

We conclude the paper by proving the other two main results of Section \ref{mainresults}.
\section{Proof of Theorems \ref{PFtheory} and \ref{PMthm}}\label{proofsec}
Recall that the homogeneity matrix $A$ of $F^{(\bsig,\bp)}$ is given as
$$A=\diag(p_1'-1,\ldots,p_d'-1)(\ones\bnu^\top-I), \qquad \bnu=(|\sigma_1|,\ldots,|\sigma_d|),$$
and $\bb\in\R^d_{++}$ is the unique positive vector such that $A^\top\bb=\rho(A)\bb$ and $\sum_{i=1}^db_i=1$.
For the proof of Theorem \ref{PFtheory}, we first need the following additional lemma.
\begin{lem}\label{connectmax}
Suppose that $\rho(A)\leq 1$ and $T$ is $\bsig$-strictly nonnegative. If $(\bt,\bu)\in\R^d_{+}\times\S_+^{(\bsig,\bp)}$ satisfies $F^{(\bsig,\bp)}(\bu)=\bt \otimes \bu$ with $\prod_{i=1}^d\theta_i^{b_i}=r_{\bb}(F^{(\bsig,\bp)})$, then $\theta_i^{p_i-1} =r^{(\bsig,\bp)}(T)$ for all $i\in[d]$ and $(r^{(\bsig,\bp)}(T),\bu)$ is a $(\bsig,\bp)$-eigenpair of $T$.
\end{lem}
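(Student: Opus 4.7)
The plan is to transfer the multi-homogeneous eigenvalue information about $F^{(\bsig,\bp)}$ back to the tensor $T$ via Lemma \ref{mhconnect}, and then identify $\lambda$ with $r^{(\bsig,\bp)}(T)$ via a sub-eigenvector comparison.

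First I would apply Lemma \ref{mhconnect} \eqref{evalmhtotens} to $(\bt,\bu)$: since $\bu\in\S_+^{(\bsig,\bp)}\subset\kone_{+,0}^{\bsig}$, there exists $\lambda\in\R_+$ such that $\theta_i=\lambda^{p_i'-1}$ for every $i$ and $(\lambda,\bu)$ is a $(\bsig,\bp)$-eigenpair of $T$. Using the standard identity $(p_i'-1)(p_i-1)=1$, this already yields $\theta_i^{p_i-1}=\lambda$, which is the first conclusion of the lemma. Combining the hypothesis $\prod_i\theta_i^{b_i}=r_{\bb}(F^{(\bsig,\bp)})$ with $\sum_i b_i=1$ gives
\[
r_{\bb}(F^{(\bsig,\bp)})=\prod_{i=1}^d\lambda^{b_i(p_i'-1)}=\lambda^{\gamma'-1}, \qquad \gamma':=\sum_{i=1}^d b_ip_i'>1.
\]
So the task reduces to proving $\lambda=r^{(\bsig,\bp)}(T)$.

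For the upper bound $|\mu|\leq\lambda$ for any $(\bsig,\bp)$-eigenvalue $\mu$ of $T$, I would take an arbitrary $(\bsig,\bp)$-eigenpair $(\mu,\bz)$ and pass to componentwise absolute values: since $T\geq 0$, the monomial expansion of $\T_{s_i}(\bz^{[\bsig]})=\mu\psi_{p_i}(\bz_i)$ combined with the triangle inequality entry by entry yields $\T_{s_i}(|\bz|^{[\bsig]})\geq |\mu|\psi_{p_i}(|\bz_i|)$ componentwise, and raising to the power $p_i'-1$ gives $F^{(\bsig,\bp)}_i(|\bz|)\geq |\mu|^{p_i'-1}|\bz_i|$. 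Thus $|\bz|\in\kone_{+,0}^{\bsig}$ is a sub-eigenvector of the order-preserving multi-homogeneous map $F^{(\bsig,\bp)}$ with homogeneity matrix $A$ satisfying $\rho(A)\leq 1$. Invoking the Collatz--Wielandt type lower bound for $r_{\bb}(F^{(\bsig,\bp)})$ from \cite{mhpfpaper}, applicable because $\bsig$-strict nonnegativity together with Lemma \ref{characpos} ensures that $F^{(\bsig,\bp)}$ preserves $\kone_{++}^{\bsig}$, we obtain
\[
|\mu|^{\gamma'-1}=\prod_{i=1}^d\bigl(|\mu|^{p_i'-1}\bigr)^{b_i}\,\leq\, r_{\bb}(F^{(\bsig,\bp)})=\lambda^{\gamma'-1},
\]
and since $\gamma'-1>0$ this gives $|\mu|\leq\lambda$.

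Taking supremum over all $(\bsig,\bp)$-eigenvalues $\mu$ of $T$ yields $r^{(\bsig,\bp)}(T)\leq\lambda$, while $\lambda$ itself is a nonnegative $(\bsig,\bp)$-eigenvalue, giving the reverse inequality. Hence $\lambda=r^{(\bsig,\bp)}(T)$, which combined with the first step proves both claims of the lemma. The main obstacle is the sub-eigenvector Collatz--Wielandt inequality in the second paragraph, which is the essential place where the hypotheses $\rho(A)\leq 1$ and $\bsig$-strict nonnegativity are used: together they permit viewing $F^{(\bsig,\bp)}$ as a non-expansive map on the weighted Hilbert metric $\mu_{\bb}$ of \eqref{WeightedHilbert}, and it is from this non-expansivity that the required upper bound on sub-eigenvalues of $F^{(\bsig,\bp)}$ by $r_{\bb}(F^{(\bsig,\bp)})$ is derived in \cite{mhpfpaper}.
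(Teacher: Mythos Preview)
Your proof is correct and follows essentially the same route as the paper's: both use Lemma \ref{mhconnect} to extract $\lambda$ with $\theta_i=\lambda^{p_i'-1}$, then for an arbitrary $(\bsig,\bp)$-eigenpair pass to absolute values, use the triangle inequality to obtain a sub-eigenvector inequality for $F^{(\bsig,\bp)}$, and invoke the Collatz--Wielandt bound from \cite{mhpfpaper} (Theorem 5.1 there) together with $\gamma'>1$ to conclude $|\mu|\leq\lambda$. The only cosmetic difference is that the paper writes the sub-eigenvector estimate as a ratio $|\vartheta|^{p_i'-1}\leq F^{(\bsig,\bp)}_{i,j_i}(\bw)/w_{i,j_i}$ on the support of $\bw$, while you phrase it as the componentwise inequality $F^{(\bsig,\bp)}_i(|\bz|)\geq |\mu|^{p_i'-1}|\bz_i|$.
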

\begin{proof}
By Lemma \ref{characpos}, we have that the $\bsig$-strict nonnegativity of $T$ implies $F^{(\bsig,\bp)}(\bx)\in\kone_{++}^{\bsig}$ for all $\bx\in\kone_{++}^{\bsig}$. Now, as $F^{(\bsig,\bp)}_i(\bu)=\theta_i\bu$ for all $i\in[d]$, Lemma \ref{mhconnect} implies the existence of $\lambda\in\R_+$ such that $\theta_i^{p_i-1}=\lambda$ for all $i\in[d]$ and $(\lambda,\bu)$ is a $(\bsig,\bp)$-eigenpair of $T$. We prove that $\lambda = r^{(\bsig,\bp)}(T)$. Clearly, we have $\lambda \leq r^{(\bsig,\bp)}(T)$. Now, let $(\vartheta,\bv)\in\R\times \R^{n_1}\times\ldots\times \R^{n_d}$ be any $(\bsig,\bp)$-eigenpair of $T$. Then, by definition, we have $\T_{s_i}(\bv^{[\bsig]})=\vartheta\psi_{p_i'}(\bv_i)$ for every $i\in[d]$. It follows that $\psi_{p_i'}(\T_{s_i,j_i}(\bv^{[\bsig]}))=|\vartheta|^{p_i'-1}\sign(\vartheta)\bv_i$ for all $i$. In particular, by the triangle inequality, with $\bw=|\bv|$, i.e. $\bw$ is the component-wise absolute value of $\bv$, we have $|\psi_{p_i'}(\T_{s_i,j_i}(\bv^{[\bsig]}))|\leq F^{(\bsig,\bp)}_{i,j_i}(\bw)$. Hence, for $(i,j_i)\in\I^{\bsig}$ such that $w_{i,j_i}>0$, it holds 
\begin{equation*}
|\vartheta|^{p_i'-1} =\bigg|\frac{\psi_{p_i'}(\T_{s_i,j_i}(\bv^{[\bsig]}))}{v_{i,j_i}}\bigg| \leq \frac{F^{(\bsig,\bp)}_{i,j_i}(\bw)}{w_{i,j_i}}\, .
\end{equation*}
Now, as $\norm{\bv_i}_{p_i}=1$ for all $i\in [d]$, we have $\bw\in\S^{(\bsig,\bp)}_+$ and thus Theorem 6.1 in \cite{mhpfpaper} implies that, with $\gamma'=\sum_{i=1}^db_ip_i'$, it holds $|\vartheta|^{\gamma'-1}=\prod_{i=1}^d |\vartheta|^{b_i(p_i'-1)} \leq r_{\bb}(F^{(\bsig,\bp)})=\lambda^{\gamma'-1}$. Finally, as $\gamma'=\frac{\gamma}{\gamma-1}$, where $\gamma$ is defined as in Lemma \ref{specradlem}, we have $\gamma'>1$ and thus it follows that $|\vartheta|\leq \lambda$ implying that $\lambda \geq r^{(\bsig,\bp)}(T)$ which concludes the proof.
\end{proof}

\begin{proof}[Proof of Theorem \ref{PFtheory}]
%The existence of $\bb$ is discussed below Equation \eqref{defb}. 
Note that, by Lemma \ref{characpos}, we have  $F^{(\bsig,\bp)}(\bx)\in\kone_{++}^{\bsig}$ for all $\bx\in\kone_{++}^{\bsig}$, as $T$ is $\bsig$-strictly nonnegative. 
\begin{enumerate}[(i)]
\item First note that $\gamma \in (1,\infty)$ by Lemma \ref{specradlem}.
To show the existence of a $(\bsig,\bp)$-eigenpair $(\lambda,\bu)\in\R_+\times \kone_{+,0}^{\bsig}$ of $T$ such that $\lambda =r^{(\bsig,\bp)}(T)$, it is enough, by Lemma \ref{connectmax}, to show that there exists $(\bt,\bu)\in\R^d_{+}\times\S_+^{(\bsig,\bp)}$ such that $F^{(\bsig,\bp)}(\bu)=\bt\krog\bu$ and $\prod_{i=1}^d\theta_i^{b_i}=r_{\bb}(F^{(\bsig,\bp)})$. 
If $\rho(A)=1$, the existence of $(\bt,\bu)$ follows from Theorem 4.1 in \cite{mhpfpaper}. 
If $\rho(A)<1$, then Theorem 3.1 in \cite{mhpfpaper}, implies the existence of $(\tilde\bt,\tilde \bu)\in\R^d_{++}\times\S_{++}^{(\bsig,\bp)}$ such that $F^{(\bsig,\bp)}(\tilde \bu)=\tilde \bt\krog\tilde \bu$. As $\tilde \bu$ is positive, Theorem 6.1 in \cite{mhpfpaper} implies that $\prod_{i=1}^d\tilde\theta_i^{b_i}=r_{\bb}(F^{(\bsig,\bp)})$ and thus we can choose $(\bt,\bu)=(\tilde \bt,\tilde \bu)$. In any case, we have proved the existence of $(\bt,\bu)$ with the desired property and it follows from Lemma \ref{connectmax} that $(r^{(\bsig,\bp)}(T),\bu)$ is a $(\bsig,\bp)$-eigenpair of $T$. 
\item
Lemma \ref{specradlem} implies that $r^{(\bsig,\bp)}(T)=r_{\bb}(F^{(\bsig,\bp)})^{\gamma-1}$ and $\gamma \in (1,\infty)$. Thus, \eqref{finalCW} and \eqref{Gelf} follow from Theorems 6.1 and 4.1 in \cite{mhpfpaper}, respectively.
\item First note that as $\rho(A)\leq1$ and $F^{(\bsig,\bp)}(\bx)\in\kone_{++}^{\bsig}$ for all $\bx\in\kone_{++}^{\bsig}$, we know from Theorem 6.1 in \cite{mhpfpaper}, that for any $(\tilde\bt,\tilde \bu)\in\R^d_{++}\times\S_{++}^{(\bsig,\bp)}$ such that $F^{(\bsig,\bp)}(\tilde \bu)=\tilde \bt\krog\tilde \bu$, we have $\prod_{i=1}^d\tilde\theta_i^{b_i}=r_{\bb}(F^{(\bsig,\bp)})$. Now, if $\rho(A)<1$, then Theorem 3.1 in \cite{mhpfpaper} implies that there exists a unique $\tilde\bu\in\S_{++}^{(\bsig,\bp)}$ such that $F^{(\bsig,\bp)}(\tilde\bu)=\tilde\bt\krog\tilde\bu$ for some $\tilde\bt\in\R^d_{++}$. If $\rho(A)=1$, then by Lemma \ref{characgraph}, we know that the $\bsig$-weak irreducibility of $T$ implies that the graph of the multi-homogeneous mapping $F^{(\bsig,\bp)}$ is strongly connected. Hence, Theorem 5.2 in \cite{mhpfpaper} implies the existence of $(\tilde\bt,\tilde \bu)\in\R^d_{++}\times\S_{++}^{(\bsig,\bp)}$ such that $F^{(\bsig,\bp)}(\tilde \bu)=\tilde \bt\krog\tilde \bu$. Furthermore, as $T$ is $\bsig$-weakly irreducible, by Lemma \ref{characgraph} we know that $DF^{(\bsig,\bp)}(\bx)$ is irreducible for every $\bx\in\kone_{++}^{\bsig}$. Hence, Theorem 6.2 in \cite{mhpfpaper} implies that $\tilde \bu$ is the unique vector in $\S_{++}^{(\bsig,\bp)}$ such that $F^{(\bsig,\bp)}(\tilde \bu)=\tilde \bt\krog\tilde \bu$. In any case, we have that there exists a unique $(\tilde\bt,\tilde \bu)\in\R^d_{++}\times\S_{++}^{(\bsig,\bp)}$ with $F^{(\bsig,\bp)}(\tilde \bu)=\tilde \bt\krog\tilde \bu$. Hence, Lemma \ref{connectmax} implies that the $(\bsig,\bp)$-eigenvector $\bu$ of \eqref{PFweak} can be chosen strictly positive. Finally, if $\bv\in \S_{++}^{(\bsig,\bp)}$ is a $(\bsig,\bp)$-eigenvector of $T$ such that $\bv\neq \bu$, then, by Lemma \ref{mhconnect}, there exists $\blam\in\R^d_{++}$ such that $F^{(\bsig,\bp)}(\bv)=\blam\krog\bv$ which is a contradiction as we have shown that $\bu$ is the unique vector in $\S_{++}^{(\bsig,\bp)}$ having this property. 
\item If $(\vartheta,\bx)$ is a $(\bsig,\bp)$-eigenpair of $T$ and $\bx\in\kone_{+,0}^{\bsig}\setminus\kone_{++}^{\bsig}$, then by Lemma \ref{mhconnect}, we have $F^{(\bsig,\bp)}_i(\bx)=\vartheta^{p_i'-1}\bx_i$ for all $i\in[d]$. Now, Theorem 6.2 in \cite{mhpfpaper} implies that, with $\gamma'=\frac{\gamma}{\gamma-1}=\sum_{i=1}^db_ip_i'$, we have $\vartheta=(\vartheta^{\gamma'-1})^{\gamma-1}<r_{\bb}(F^{(\bsig,\bp)})^{\gamma-1}=r^{(\bsig,\bp)}(T),$
where we have used Lemma \ref{specradlem} for the last equality.
\item Let $(\vartheta,\bx)$ be a $(\bsig,\bp)$-eigenpair of $T$ such that $\bx\in\kone_{+,0}^{\bsig}$. As $T$ is $\bsig$-strongly irreducible, Lemma \ref{characirr} implies that $F^{(\bsig,\bp)}$ satisfies the assumption of Lemma \ref{posvectirr}. In particular, as $F^{(\bsig,\bp)}_i(\bx)=\vartheta^{p_i'-1}\bx_i$ for all $i\in[d]$, Lemma \ref{posvectirr} implies that $\bx\in\S_{++}^{(\bsig,\bp)}$. As $\bsig$-strong irreducibility implies $\bsig$-weak irreducibility by Theorem \ref{irrthm}, we know by \eqref{uniquepos} that $\bu$ is the unique positive $(\bsig,\bp)$-eigenvector of $T$ in $\S_{++}^{(\bsig,\bp)}$ and thus $\bx=\bu$.
\end{enumerate}
\end{proof}

To prove Theorem \ref{PMthm}, we first introduce the following preliminary lemma:
\begin{lem}\label{Glem}
Let $G^{(\bsig,\bp)}$ be defined as in \eqref{defG}. Then, the following hold: 
\begin{enumerate}[(a)]
\item $G^{(\bsig,\bp)}$ is an order-preserving multi-homogeneous mapping. Furthermore, the homogeneity matrix of $G^{(\bsig,\bp)}$ is given by $B=(A+I)/2$ and $B^\top\bb=\rho(B)\bb$, where $A$ is the homogeneity matrix of $F^{(\bsig,\bp)}$.\label{111}
\item If $T$ is $\bsig$-strictly nonnegative, then $G^{(\bsig,\bp)}(\bx)\in\kone_{++}^{\bsig}$ for all $\bx\in\kone_{++}^{\bsig}$.
\item For every $\bu\in\kone_{++}^{\bsig}$, we have $F^{(\bsig,\bp)}(\bu)=\bt\krog\bu$ if and only if $G^{(\bsig,\bp)}(\bu)=\tilde\bt\krog\bu$ with $\tilde \theta_i^2=\theta_i$ for all $i\in[d]$.
\item It holds $r_{\bb}(G^{(\bsig,\bp)})^2=r_{\bb}(F^{(\bsig,\bp)})$.\label{222}
\item If $T$ is $\bsig$-weakly irreducible, then the Jacobian matrix $DG^{(\bsig,\bp)}(\bx)$ is primitive for every $\bx\in\kone_{++}^{\bsig}$.\label{333}
\end{enumerate}
\end{lem}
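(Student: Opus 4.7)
The plan is to verify parts (a)--(d) by direct computation and to reduce part (e), which I expect to be the main obstacle, to a standard fact about primitive matrices.

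For (a), substituting $\bal\krog\bx$ into $G^{(\bsig,\bp)}_{i,j_i}(\bx)=\sqrt{x_{i,j_i}F^{(\bsig,\bp)}_{i,j_i}(\bx)}$ and using the multi-homogeneity of $F^{(\bsig,\bp)}$ with matrix $A$ yields the factor $\alpha_i^{1/2}$ from $x_{i,j_i}$ and $\prod_k\alpha_k^{A_{i,k}/2}$ from $F^{(\bsig,\bp)}_{i,j_i}$, identifying the homogeneity matrix of $G^{(\bsig,\bp)}$ as $B=(A+I)/2$. Order-preservation is immediate from monotonicity of the square root together with the order-preservation of $F^{(\bsig,\bp)}$. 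Every eigenvalue of $B$ has the form $(\mu+1)/2$ for an eigenvalue $\mu$ of $A$, and since $\rho(A)$ dominates every $|\mu|$ we get $\rho(B)=(\rho(A)+1)/2$; therefore $B^\top\bb=\tfrac{1}{2}(A^\top+I)\bb=\tfrac{\rho(A)+1}{2}\bb=\rho(B)\bb$.

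For (b), Lemma~\ref{characpos} gives $F^{(\bsig,\bp)}(\bx)\in\kone_{++}^{\bsig}$ whenever $T$ is $\bsig$-strictly nonnegative, so each $G^{(\bsig,\bp)}_{i,j_i}(\bx)$ is a square root of a positive number. Part (c) follows by squaring: $G^{(\bsig,\bp)}_{i,j_i}(\bu)=\tilde\theta_i u_{i,j_i}$ rearranges to $F^{(\bsig,\bp)}_{i,j_i}(\bu)=\tilde\theta_i^2\,u_{i,j_i}$ after dividing by $u_{i,j_i}>0$, and the reverse direction is the same computation. For (d), I would apply the Collatz--Wielandt representation of $r_\bb$ from Theorem~5.1 of \cite{mhpfpaper} to both maps: on $\kone_{++}^{\bsig}$ the identity $G^{(\bsig,\bp)}_{i,j_i}(\bx)/x_{i,j_i}=\sqrt{F^{(\bsig,\bp)}_{i,j_i}(\bx)/x_{i,j_i}}$, together with the commutativity of $\max_{j_i}$ with $\sqrt{\,\cdot\,}$, shows that the product over $i$ weighted by $b_i$ of the row maxima for $G^{(\bsig,\bp)}$ equals the square root of the analogous quantity for $F^{(\bsig,\bp)}$, whence $r_\bb(G^{(\bsig,\bp)})^2=r_\bb(F^{(\bsig,\bp)})$ upon taking the infimum over $\bx\in\S_{++}^{(\bsig,\bp)}$.

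The hard part is (e). The strategy is to write $DG^{(\bsig,\bp)}(\bx)$ explicitly and show it is irreducible with strictly positive diagonal; primitivity then follows from the classical fact that an irreducible nonnegative matrix with a positive diagonal entry has period one. Direct differentiation yields
\[
\partial_{x_{k,l_k}}G^{(\bsig,\bp)}_{i,j_i}(\bx) = \frac{\delta_{(i,j_i),(k,l_k)}\,F^{(\bsig,\bp)}_{i,j_i}(\bx) + x_{i,j_i}\,\partial_{x_{k,l_k}}F^{(\bsig,\bp)}_{i,j_i}(\bx)}{2\,G^{(\bsig,\bp)}_{i,j_i}(\bx)}.
\]
By Theorem~\ref{irrthm}\eqref{irrthm0}, $\bsig$-weak irreducibility of $T$ implies $\bsig$-strict nonnegativity, so $F^{(\bsig,\bp)}_{i,j_i}(\bx)>0$ for every $(i,j_i)\in\I^{\bsig}$ and every $\bx\in\kone_{++}^{\bsig}$; this makes every diagonal entry of $DG^{(\bsig,\bp)}(\bx)$ strictly positive. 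The off-diagonal entries, in turn, share the sign pattern of $DF^{(\bsig,\bp)}(\bx)$, which by Lemma~\ref{characgraph} is the adjacency matrix of $\G^{\bsig}(T)$ and is therefore irreducible by hypothesis. Primitivity of $DG^{(\bsig,\bp)}(\bx)$ then follows.
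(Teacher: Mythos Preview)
Your argument is correct and follows the same line as the paper's: parts (a)--(d) are handled by direct computation, and for (e) you compute the Jacobian of $G^{(\bsig,\bp)}$, observe that it inherits the off-diagonal sign pattern of $DF^{(\bsig,\bp)}(\bx)$ (irreducible by Lemma~\ref{characgraph}) while gaining a strictly positive diagonal from the $F^{(\bsig,\bp)}_{i,j_i}(\bx)$ term, and conclude primitivity. Your componentwise Jacobian formula is in fact slightly more precise than the matrix identity stated in the paper, which omits the factor $\diag(\bx)$ in front of $DF^{(\bsig,\bp)}(\bx)$; since $\bx\in\kone_{++}^{\bsig}$ this does not affect the primitivity conclusion either way.
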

\begin{proof}
\eqref{111}-\eqref{222} follow by a straightforward calculation. For \eqref{333}, note that $$DG^{(\bsig,\bp)}(\bx) = \frac{1}{2}\diag(G^{(\bsig,\bp)}(\bx))^{-1/2}\big(\diag(F^{(\bsig,\bp)}(\bx))+\diag(x)DF^{(\bsig,\bp)}(\bx) \big).$$ As $T$ is $\bsig$-weakly irreducible, $DF^{(\bsig,\bp)}(\bx)$ is irreducible by Lemma \ref{characgraph}. It follows that $DG^{(\bsig,\bp)}(\bx)$ is primitive.
\end{proof}

\begin{proof}[Proof of Theorem \ref{PMthm}]
We begin with general observations: As $\bu\in\S_{++}^{(\bsig,\bp)}$ is a positive  $(\bsig,\bp)$-eigenvector of $T$, we know by \eqref{finalCW} that its corresponding $(\bsig,\bp)$-eigenvalue is $\lambda=r^{(\bsig,\bp)}(T)$. Furthermore, Lemmas \ref{mhconnect} and \ref{Glem} imply that $F^{(\bsig,\bp)}_i(\bu)=\lambda^{p_i'-1}\bu_i$ and $G^{(\bp,\bsig)}_i(\bu)=\lambda^{(p_i'-1)/2}\bu_i$ for all $i\in[d]$. Lemmas \ref{specradlem} and \ref{Glem} imply that $\lambda=r_{\bb}(F^{(\bsig,\bp)})^{\gamma-1}=r_{\bb}(G^{(\bsig,\bp)})^{2(\gamma-1)}$.
To show \eqref{genPM}, let $\omega\in\{\xi,\zeta\}$. Then \eqref{monoseq} follow from Lemma 7.4 in \cite{mhpfpaper}. Now, suppose that $\epsilon >0$ and $\widehat\omega_{k} -\widecheck\omega_{k} <\epsilon$. Then, \eqref{stopcrit} is obtained by subtracting $(\widehat\omega_{k} +\widecheck\omega_{k} )/2$ from $\widehat\omega_{k} \leq \lambda \leq \widecheck\omega_{k}$. Finally, with Lemma \ref{Glem}, \eqref{333}, we have that \eqref{FPM} and \eqref{GPM} both follow from Theorem 7.1 in \cite{mhpfpaper}.
\end{proof}
\section*{Acknowledgments}\addcontentsline{toc}{section}{Acknowledgments}
The authors are grateful to Shmuel Friedland and Lek-Heng Lim,  for a number of insightful discussions and for pointing out relevant references, and to the anonymous referees for the several very useful comments they pointed out which remarkably improved the presentation of this work.

% \addcontentsline{toc}{section}{References}
% \bibliographystyle{siamplain}
% \bibliography{PFMH_bib}  

\end{document}